\newcommand{\intD}{\;\mathrm{d}}
\newcommand{\rom}[1]{\uppercase\expandafter{\romannumeral #1\relax}}
\newtheorem{theorem}{Theorem}
\newtheorem{proposition}{Proposition}
\newtheorem{lemma}{Lemma}
\newtheorem{definition}{Definition}
\newtheorem{remark}{Remark}
\newcounter{subeqn}[equation]
\newcommand{\subr}[1]{(\subref*{#1})}
\pgfplotsset{compat=1.18}
\def\thefootnote{\arabic{footnote}}
\definecolor{matplotlibgreen}{RGB}{44, 160, 44}  
\renewcommand{\thefootnote}{\fnsymbol{footnote}}
\newlength{\subfigwidth}
\title{Data-Driven Filtering of the Spherical Harmonics Method}
\author{
   Benjamin Plumridge\footnotemark[2] ,
  Cory Hauck\footnotemark[2] \footnotemark[3]  , \small{AND}
  \large Steffen Schotth\"ofer\footnotemark[3] 
}
\date{}
\begin{document}

\maketitle
\footnotetext[2]{Department of Mathematics, University of Tennessee, Knoxville}
\footnotetext[3]{Computer Science and Mathematics Division, Oak Ridge National Laboratory}

\renewcommand{\thefootnote}{\arabic{footnote}}

\begin{abstract}
We investigate a data-driven approach for tuning the filtered spherical harmonics method (\fpn) when solving the radiation transport equation (RTE). The \fpn method extends the classical spherical harmonics approach (\pn) by introducing regularization through a filter operator, which mitigates spurious oscillations caused by Gibbs' phenomenon. This filter includes a tunable parameter, the {filter strength}, that controls the degree of smoothing applied to the solution. However, selecting an optimal filter strength is nontrivial, often requiring inaccessible information such as the true or a high-order reference solution. To overcome this limitation, we model the filter strength as a neural network whose inputs include local state variables and material cross-sections. The optimal filter strength is formulated as the solution to a PDE-constrained optimization problem, and the neural network is trained using a discretize-then-optimize formulation in PyTorch. We evaluate the learned filter strength across a suite of test problems and compare the results to those from a simple, but tunable constant filter strength. In all cases, the neural-network-driven filter substantially improves the accuracy of the \pn approximation.  For 1-D test cases, the constant filter outperforms the neural-network filter in some cases, but in 2-D problems, the neural-network filter generally performs better.

\end{abstract}

\section*{Funding}
This work is sponsored by the National Science Foundation under DMS-1913277 and by the Applied Mathematics Program at the Office of Advanced Scientific
Computing Research, U.S. Department of Energy.  Work was performed at the Oak Ridge National Laboratory, which is
managed by UT-Battelle, LLC under Contract No. DE-AC05-00OR22725 with the U.S. Department of Energy. The
United States Government retains and the publisher, by accepting the article for publication, acknowledges that the
United States Government retains a non-exclusive, paid-up, irrevocable, world-wide license to publish or reproduce the
published form of this manuscript, or allow others to do so, for United States Government purposes. The Department of
Energy will provide public access to these results of federally sponsored research in accordance with the DOE Public
Access Plan (http://energy.gov/downloads/doe-public-access-plan).

\section{Introduction}
The radiation transport equation (RTE) is an integro-partial differential equation (PDE) used to model the movement of particles that interact with a background material medium via emission, absorption, and scattering processes. 
The RTE has applications in high-energy density physics and nuclear engineering, including core-collapse supernovae \cite{mezzacappa2020physical}, thermal radiative transfer \cite{howell2020thermal}, general radiation hydrodynamics \cite{mihalas1984radiation,pomraning2005equations}, and nuclear reactor theory \cite{lewis1984computational, bell1970nuclear}.
One of the challenges in approximating solutions of the RTE is dealing with its high-dimensional phase space, which can include as many as six dimensions:  three spatial variables, two angular variables, and one energy variable.  If transients are desired, then the RTE will also depend on time. Due to limitations in computational resources, coarse approximations of the RTE are often necessary. In many cases, angular discretization is the primary target for such approximations.

One well known approach for discretizing the RTE in angle is with a  truncated spherical harmonics expansion \cite{brunner2005two,case1967linear}, where the expansion coefficients are functions of space and time that satisfy a system of hyperbolic balance laws.  These equations are referred to as the \pn equations, where the subscript $N$ refers to the polynomial degree at which the expansion is truncated.  
For smooth solutions of the RTE, the \pn equations converge spectrally to the RTE as $N \to \infty$ \cite{Frank2016}; this is one the key advantages of the method. Another advantage is that the operators in the \pn equations are all sparse and therefore cheap to evaluate.  In particular, because the spherical harmonics are eigenfunctions of the scattering operator in the RTE, the corresponding operator in the \pn equations appears as a diagonal matrix \cite{lewis1984computational}.  
Meanwhile, the advection operator of the RTE is converted into a set of banded matrices whose exact form follows from known recursion relations for the spherical harmonics \cite{brunner2005two}.
  
Despite the efficiency in approximating smooth solutions to the RTE, the \pn equations do have drawbacks.
Like all spectral methods \cite{Gottlieb2007}, they exhibit oscillatory
behavior  when approximating non-smooth functions, particularly near discontinuities and large gradients \cite{Brunner2002}.
In some cases, these oscillations can lead to solutions with negative particle densities \cite{Brunner2002} unless some type of numerical limiter is applied \cite{hauck2010positive}.  Because scattering induces smooth solutions, these issues are more prevalent when the amount of scattering is relatively small. 

The filtered spherical harmonics (\fpn) equations are a modification of the standard \pn equations that induce regularization via an artificial scattering operator.   The key advantage of the \fpn approach is that it effectively damps numerical oscillations at a computational cost that remains comparable to that of the original \pn method.  
Filtering the spherical harmonic expansion was first proposed in \cite{Hauck2010}, where the filter was implemented as an post-processing step after every time step of the \pn system.  Later in  
\cite{Radice2013}, the \fpn equations were derived as the continuous limit as $\Delta t \rightarrow 0$ of the filtering scheme presented in \cite{Hauck2010}.
In \cite{Frank2016}, the convergence properties of the \fpn equations were established.  
Roughly speaking, it was shown that the \fpn equations converge to the exact RTE solution at a rate that is equal to the filter order for smooth solutions and (almost) equal to the convergence order of the standard \pn equations for non-smooth solutions. 
Although filtering alone does not ensure positivity, additional limiting strategies can be employed \cite{laiu2016positive, laiu2019positivity}.

For a given filter order, the amount of regularization in the \fpn equations is determined by the filter strength, a single parameter that does not affect the convergence behavior when $N$ is large but can have dramatic effects when $N$ is small \cite{laboure2016implicit}, particularly in regimes with relatively low scattering.  
As with Tikhonov regularization when solving noisy inverse problems, tuning the filter strength is a matter of balancing stability with consistency \cite{Oleary1993}:  while increasing the filter strength improves the stability of the method, it also introduces additional consistency error in the angular discretization.  
It remains an open question how to best tune the filter, but currently the most viable approach seems to be a combination of physics-based scaling arguments and experimental trial and error using relatively cheap, coarse-mesh simulations \cite{Hauck2010,laboure2016implicit}.

The goal of the current work is to design a data-driven model for the filter strength in the \fpn equations. 
Specifically, we design a parametrized neural network that takes local material properties and the solution state as inputs and outputs a suitable filter strength. 
As a consequence, the filter strength will vary both spatially and temporally.
We train this data-driven filter via PDE-constrained optimization.  
We use a discretize-then-optimize approach using a second-order, finite-volume scheme for space-time discretization.
The training loss is the $L^2$ discrepancy of the particle concentration for the data-driven \fpn solution and a high order \pn reference simulation.
To generate training data, we sample various test-problems with different source terms, initial conditions and material properties.

The learned models parameters are used to predict the optimal filter strength in several test problems and compared to the original \pn solution.  
We test the neural network filter over a set of problems that challenge the approximation with discontinuities and steep gradients in the material properties and the solution state.   
Our results show substantial error reductions.
Because the raw \pn solutions are often so poor that almost any filter yields improvements, we introduce a stronger baseline: a constant filter strength, realized as a degenerate neural network with only a bias term. In many instances, even this constant filter markedly improves the accuracy and quality of the \pn approximation. In several one-dimensional cases, it outperforms the neural-network filter, especially in regions of material vacuum.

The remainder of the paper is organized as follows. In Section \ref{sec:background}, we describe the mono-energetic RTE, as well as the \pn and \fpn equations. 
In Section \ref{sec:dd_FPN}, we formulate the details of the data driven filter.
In Section \ref{sec:numerical results}, we show numerical results.   Some of the details of the \pn discretization are provided in the Appendix.

\section{Background} \label{sec:background}

In this section, we briefly recall the radiation transport equation (RTE) for a collection of mono-energetic particles.  We also present the \pn and \fpn equations.

\subsection{Radiation transport equation} 
The radiation transport equation (RTE) governs the evolution of a kinetic distribution $\psi$ of particles moving through a background medium.  Let $X \subset \bbR^3$ have a piecewise smooth boundary $\p \Omega$ and let $\mathbb{S}^2$ be the unit sphere in $\bbR^3$.
In this paper, $\psi$ is defined such that $\psi(\bm{x},\bm{\Omega},t)$ gives the density of particles, with respect to the measure $d \bm{\Omega} dx$, located at position $\bm{x} = (x,y,z)^\top  \in X$, moving in direction $\bm{\Omega} \in \bbS^2$ at time $t \geq 0$.  For particles with unit speed, the RTE takes the form
\begin{subequations}
\begin{align}
\p_t \psi(\bm{x},\bm{\Omega},t) + \bm{\Omega} \cdot \grad_{\bm{x}} \psi(\bm{x},\bm{\Omega},t)   + \sig{a}(\bm{x}) \psi(\bm{x},\bm{\Omega},t) +  \sig{s}(\bm{x}) \cQ \psi(\bm{x},\bm{\Omega},t)    &= S(\bm{x},\bm{\Omega},t),\\
\psi(\bm{x},\bm{\Omega},t = 0) &= \psi_0(\bm{x},\bm{\Omega}),\\
\psi(\bm{x},\bm{\Omega},t) &= \psi_{\text{b}}(\bm{x},\bm{\Omega}), & \forall (\bm{x},\bm{\Omega}) \in \Gamma^-,
\end{align}
\label{eq:RTE}
\end{subequations}
where $S= S(\bm{x},\bm{\Omega},t)$ is a non-negative source and the absorption cross-section $\sig{a}$ and scattering cross-section $\sig{s}$ are non-negative functions of $\bm{x}$.  
The scattering operator $\cQ$, which describes the change in direction of the particles due to collisions with the background medium, is an integral operator in $\bm{\Omega}$:
\begin{align}
 \cQ \psi(\bm{x},\bm{\Omega},t) =  \psi(\bm{x},\bm{\Omega},t) - \int_{\mathbb{S}^2} g(\bm{\Omega} \cdot \bm{\Omega}') \psi(\bm{x},\bm{\Omega}',t) d \bm{\Omega}',
\end{align}
where the scattering kernel $g: [-1,1] \rightarrow \bbR$ is normalized so that $\int_{\mathbb{S}^2} g(\bm{\Omega} \cdot \bm{\Omega}') d \bm{\Omega}' = 1$.  
In the numerical results of this work, we focus on isotropic scattering kernels, in which case $g= \frac{1}{4\pi}$.
However, we maintain $\cQ$ as a general operator in the derivations and proofs below. 
Inflow data is provided on the set
\begin{align}
\Gamma^{-} = \left\{(\bm{x},\bm{\Omega}) \in \p X \times \mathbb{S}^2 : \pm \bfn(\bm{x}) \cdot \bm{\Omega} \ge 0\right\},
\end{align}
where $\bfn(\bm{x})$ is the outward unit normal to $X$ at $\bm{x} \in \p X$, is the inflow boundary in phase space.

The existence of a unique semi-group solution for \eqref{eq:RTE} is given in  \cite[Chapter XXI]{Dautray2000}, under standard assumptions on the data.  If all sources, initial conditions, and boundary conditions are non-negative, then the semi-group solution remains non-negative for all time.

\subsection{\texorpdfstring{\pn}{P\_N} equations}
The \pn equations are based on a spectral Galerkin approximation of the RTE in the angular variable in terms of a truncated spherical harmonic expansion. Let $\theta \in [0,\pi]$ and $\varphi \in [0,2\pi)$ be the polar and azimuthal angles associated to $\bm{\Omega}$.  Then
\begin{align}   
\label{eq:omega}
\bm{\Omega} = \begin{pmatrix} \Omega^{(1)}\\ {\Omega}^{(2)}\\ {\Omega}^{(3)} \end{pmatrix} = \begin{pmatrix} \sin \theta \cos\varphi \\ \sin \theta  \sin \varphi \\ \cos \theta \end{pmatrix} 
= \begin{pmatrix} \sqrt{1-\mu^2} \cos\varphi \\  \sqrt{1-\mu^2} \sin\varphi \\ \mu \end{pmatrix}
\end{align}
where $\mu = \cos \theta$. 
Spherical harmonics \cite{Atkinson2012}, which form an orthonormal basis for $L^2(\bbS^2)$, are defined for for each $\ell \in \{ 0,1,2,...\}$ and  $m  \in \{-\ell,...,\ell\}$ by 
\begin{align}
Y_\ell^m(\bm{\Omega}) = (-1)^m \sqrt{\frac{(2\ell +1)}{4 \pi}\frac{(\ell-m)!}{(\ell+m)!}} P_\ell^m(\mu) e^{im\varphi},
\end{align}
where $P_\ell^m$ represents the Legendre function \cite{Abramowitz1972} of degree $\ell$ and order $m$, and $\mu$ and $\varphi$ are related to $\bm{\Omega}$ via \eqref{eq:omega}.
The motivation for using this basis is that spherical harmonics form a complete set of eigenfunctions for $Q$ \cite{case1967linear}.  More specifically, 
\begin{align}
\label{eq:eigen-Q}
\cQ Y_\ell^m = (1- g_\ell)Y_\ell^m,
\end{align}
where
\begin{equation}
    g_\ell = 2 \pi \int_{-1}^1 P_\ell(\eta) g(\eta)d\eta 
\end{equation}
and $P_\ell$ is the degree $\ell$ Legendre polynomial, normalized such that $ \int_{-1}^1 |P_\ell(\eta) |^2d\eta = \frac{2}{2\ell+1}$.  With this normalization, $P_0(\eta)=1$ and $|P_\ell(\eta)| < 1$ for $\ell >1$ and almost every $\eta \in [-1,1]$ \cite{Atkinson2012}; it follows that $g_0=1$ and $g_\ell < 1$ for $\ell >1$.

For convenience, we use real-valued, orthonormal spherical harmonics
\begin{align}
r_\ell^m =
\begin{cases}
\frac{1}{\sqrt{2}} \left( Y_\ell^{-m} + (-1)^m Y_\ell^m \right) & \text{if } m > 0, \\
Y_\ell^0 & \text{if } m = 0, \\
\frac{i}{\sqrt{2}} \left( Y_\ell^{|m|} - (-1)^{|m|} Y_\ell^{-|m|} \right) & \text{if } m < 0.
\end{cases}
\end{align}
Let $\bfr_\ell: \bm{\Omega} \longrightarrow \bbR^{2\ell+1}$ be a vector-valued function containing all of the spherical harmonic functions of degree $\ell$, i.e.,
\begin{equation}
    \bfr_0 = r_0^0, \quad
    \bfr_1 = [r_1^{-1},  r_1^0,  r_1^1]^\top, \quad  
    \bfr_\ell = [r_{\ell}^{-\ell}, \dots, r_{\ell}^0  \dots  r_{\ell}^{\ell}] ^\top;
\end{equation}
and let $\bfr: \bm{\Omega} \longrightarrow \bbR^{n}$, where $n = (N+1)^2$, be a single vector-valued function containing all the spherical harmonic functions up to degree $N$; that is,  $\bfr = [\bfr_0^\top \dots \bfr_N^\top]^\top$.

The \pn approximation is given by
\begin{equation}
\label{eq:pn-expansion}
    \psipn(\bm{x}, \bm{\Omega}, t) =  \upn^\top(\bm{x},t) \bfr(\bm{\Omega}) = \sum_{\ell =0}^N \sum_{m = -\ell}^\ell [u_{\pn}]_\ell^m(\bm{x},t) r_\ell^m(\bm{\Omega}),
\end{equation}
where
\footnote{Boundary conditions for the \pn equations can be formulated in several ways.  We leave them unspecified until needed in the numerical results.}
 \begin{subequations}
\begin{align}
\vint{\bfr \p_t \psipn} + \vint{ \bfr \bm{\Omega} \cdot \grad_{\bm{x}} \psipn} + \sig{a} \vint{ \bfr \psipn} + \sig{s}  \vint{ \bfr \cQ \psipn }  &= \vint{\bfr S },\\
\vint{ \bfr  \psipn|_{t =0})  } &= \vint{\bfr \psi_0 }.
\end{align}
\label{eq:PN_weak}
\end{subequations}
Substituting the expansion in \eqref{eq:pn-expansion} into \eqref{eq:PN_weak} yields the \pn equations for the coefficient vector $\upn : X \times [0,\infty) \to \bbR^{n}$:
\begin{subequations}
\begin{align}
\p_t \upn(\bm{x},t) + \bf\bfA \cdot \grad_{\bm{x}} \upn(\bm{x},t) + \sig{a}(\bm{x})\upn(\bm{x},t) + \sig{s}(\bm{x}) \bfG \upn(\bm{x},t)  &= \bfs(\bm{x},t),\\
\upn(\bm{x},0) &= \vint{\bfr \psi_0}  .
\end{align}\label{eq:PN}
\end{subequations}
Here $\bfA$ is a three-tensor such that
\begin{align}
      \bfA \cdot \grad_{\bm{x}} \upn 
    = \bfA^{(1)} \p_x  \upn 
    + \bfA^{(2)} \p_y  \upn
    + \bfA^{(3)} \p_z  \upn
    , \quad \text{with} \quad  \bfA^{(m)} = \vint{\bm{\Omega}^{(m)} \bfr \bfr^\top},
\end{align}    
The matrices $\bfA^{(m)} \in \bbR^{n \times n}$ are sparse and symmetric; they can be computed with a recurrence relationship for spherical harmonics, which for real-valued harmonics, can be found in the appendix of \cite{Frank2016}; see also Appendix \ref{sec:flux_matrices}.  
Meanwhile, due to \eqref{eq:eigen-Q}, the matrix $\bfG = \vint{ \bfr Q \bfr^\top } \in \bbR^{n \times n}$ is diagonal with components $G_{(\ell,m),(\ell,m)} = (1 - g_\ell)$.  
The source vector $\bfs = \vint{ \bfr S }$ and initial condition $\vint{ \bfr \psi_0 }$ are known.
We write \eqref{eq:PN} in the compact form
\begin{subequations}
\begin{align}
\p_t \upn(\bm{x},t) + \cT \upn(\bm{x},t)  &= \bfs(\bm{x},t),\\
\upn(\bm{x},0) &= \vint{\bfr \psi_0} ,
\end{align}
\label{eq:PN_abstract}
\end{subequations}
where $\cT \bfv = \bf\bfA \cdot \grad_{\bm{x}} \bfv  + \sig{a}\bfv  + \sig{s}\bfG\bfv $.

\subsection{\texorpdfstring{\fpn}{FP\_N} equations}
The \fpn equations are constructed by adding artificial scattering terms to the \pn equations.  These terms appear in the form of a diagonal operator, whose components are expressed in terms of a filter function \cite{Gottlieb2007}.  The precise definition of this function can vary in the literature; for our purposes, a filter function  of order $\alpha \in \mathbb{N}$ is a real-valued, non-negative, and monotonically decreasing function $f \in C^\alpha([0,1])$ that satisfies the following properties:
\begin{equation}
    (i)~f(0) = 1,
    \qquad
    (ii)~f^{(\ell)}(0) = 0,~\forall \ell \le \alpha - 1, 
    \qquad
    (iii)~f^{(\alpha)}(0) \ne 0.
\end{equation}
In this paper, we will use an exponential filter of order four:
\begin{align}
f(s) = \exp(-s^\alpha), \qquad \alpha=4.
\end{align}

With a filter function in hand, the \fpn equations can be written as
\begin{subequations}
\begin{align}
\p_t \ufpn(\bm{x},t) + \cT \ufpn(\bm{x},t) + \sig{f}(\bm{x},t) \bfF \ufpn(\bm{x},t) &= \bfs(\bm{x},t),\\
\ufpn(\bm{x},0) &= \vint{\bfr \psi_0}   ,
\end{align}\label{eq:FPN}
\end{subequations}
where $\sig{f}(\bm{x},t) \ge 0$ is the filter strength and $\bfF$ is a diagonal matrix with components 
\begin{equation}
    \bfF_{(\ell,m),(\ell,m)} = -\log \left(f \left( \frac{\ell}{N+1}\right) \right).
\end{equation}
In particular, the component $\bfF_{(0,0),(0,0)}=0$, which implies that the filter does not affect the total particle concentration.
 The user must select the filter function and strength appropriately for a given setting.   In previous work, $\sig{f}$ was chosen to be constant.

Given a solution of \eqref{eq:FPN}, the \fpn approximation of $\psi$ is $\psifpn  =\bfr^\top \ufpn$.  The convergence of the filtered approximation to the true solution depends on the order of the filter function $f$, independent of the filter strength.   Rigorous results \cite{Frank2016} suggest that the order of the filter should be chosen based on the expected regularity of the RTE solution.
Roughly speaking, when the filter order $\alpha$ is greater than the angular regularity of $\psi$, the convergence is still spectral but the rate drops by a small constant factor (less than one).  When $\alpha$ is less than the angular regularity of $\psi$, then the convergence rate is given by $\alpha$.  

Numerical experiments in \cite{Hauck2010, laboure2016implicit} show that the potential benefits of filtering are greater for small values of $N$.  Moreover, practical constraints typically limit realistic transport simulations with the \pn equations to $N \leq \cO(10)$ .  Unfortunately, the convergence results in \cite{Frank2016} provide little guidance about how to select the filter strength $\sig{f}$ in order to ensure quality results when $N$ is small.  These facts motivate our data-driven approach.

\section{Data-driven \texorpdfstring{\fpn}{FP\_N} Equations}
\label{sec:dd_FPN}

We construct a data-driven filter for the $\pn$ equations that depends locally in space and time on $\bfs$ and on the solution itself.  Specifically, we let
\begin{equation}
\label{eq:parameterized-filter}
    \sig{f}(\bm{x},t) = \Sig{f}\big(\bm{\xi}(\ufpn(\bm{x},t)),\bfs(\bm{x},t);\bm{w}_{\fpn} \big),
\end{equation}
where the ansatz
$
    \Sig{f} \colon (\bm{\xi},\bfs;\bm{w})\mapsto \bbR^{\geq 0}
$
takes a feature vector $\bm{\xi}$, the source $\bfs$, and a parameter vector $\bm{w}$ and outputs the corresponding filter strength. 
The parameter vector $\bm{w}_{\fpn}$ in  \eqref{eq:parameterized-filter} is determined by a training procedure.  The ansatz $\Sig{f}$ can take on a variety of functional forms.  Here we use neural networks with weights $\bm{w}$ that are trained based on the $L^2$ error in the particle concentration $\phi = \vint{\psi}$ at some final time $\tf$.  For the input features in $\bm{\xi}$, we choose each of the terms in the operator $\cT$ in the \pn equations \eqref{eq:PN}.  Specifically, for any sufficiently smooth function $\bfv : X \to \bbR^n$, $\bm{\xi}(\bfv): X \to \bbR^{n\times 3}$ is given by
\begin{equation}\label{eq_features}
    \bm{\xi}(\bfv) 
    = [\bfA \cdot \grad_{\bm{x}} \bfv
    \quad
    \sig{t} \bfv
    \quad  
    \sig{s} (\bfG-\bfI)\bfv ].
\end{equation}

Let the random variable $\gamma$ parameterize the distribution of sources, initial conditions, and cross-sections that are used for training.  Let $\phi^\gamma = \vint{\psi^\gamma}$, where  $\vint{\psi^\gamma}$ solves \eqref{eq:RTE} with source $S^\gamma$, initial condition $\psi_0^\gamma$, and cross-sections $\sig{a}^\gamma$ and $\sig{s}^\gamma$.  We assume that for all parameterizations $\gamma$, there exists a unique solution of the \fpn equations
\begin{subequations}
\begin{align}
    \p_t \ufpn^\gamma 
    + \cT \ufpn^\gamma 
    + \Sig{f} (\bm{\xi},\vint{ \bfr S^\gamma} ;\bm{w}) \bfF \ufpn^\gamma &= \vint{{\bfr} {S}^\gamma} \\ 
        \ufpn^\gamma(\bm{x},0) &= \vint{\bfr \psi_0^\gamma} 
\end{align}
\end{subequations}
and define the map
\begin{equation}
    \Phi^\gamma:  \bm{w} \mapsto \phi^\gamma_{\fpn}(\cdot,\tf),
\end{equation}
which takes as input the parameter $\bm{w}$ and returns the particle concentration  $\phi^\gamma_{\fpn} = \vint{\psifpn^\gamma} = \sqrt{4\pi} [u_{\fpn}^\gamma]_0^0$ at the time $\tf$.
\footnote{The difference in the concentration $\phi_{\fpn}$ and the moment $[u_{\pn}]_0^0$ is due to the normalization of $r_0^0 = 1/\sqrt{4 \pi}$.}    

We formulate the training as a PDE-constrained minimization problem 
\begin{equation}
\label{eq:cont-objective}
  \min_{\bm{w}}\, J(\bm{w}),
  \quad
  \text{where $J(\bm{w}) =  \mathbb{E}_{\gamma} [j( \Phi^\gamma(\bm{w}); \phi^{\gamma}(\cdot,\tf)  )]$}
\end{equation}
and for any functions $a,b \in L^2(X)$
\begin{align}
j(a;b)
= \frac{1}{4 \pi}
\left\| a - b \right\|^2_{L^2(X)}.
\end{align} 
In practice the reference solution $\phi^{\gamma}$ is itself approximated in angle with a \pn solution with $N$ very large.

\subsection{Discretization of the PDE Constraint}
In practice, the \fpn equations must be discretized in space and time. To this end, we employ a second-order finite-volume scheme in space and Heun's method in time.  This discretization yields an approximate solution $\bar{\bfv}^\gamma$ that is constant in each spatial mesh cell and defined on a finite set of temporal points $ \{0 = t_0, t_1 \ddd t_K = \tf \}$ such that $\bar{\bfv}^\gamma(\bm{x},t_k) \approx {\bfu}^\gamma_{\fpn}(\bm{x},t_k)$ for all $\bm{x} \in X$.  

Let $\vec{\bfv}^{\gamma,k}$ be an array that contains the cell averages of $\bar{\bfv}^\gamma(\cdot,t_k)$ on each spatial mesh cell. 
For each $k$, $\vec{\bfv}^{\gamma,k}$ evolves according to an update rule of the form
\begin{equation}
\label{eq:time_stepper}
    \vec{\bfv}^{\gamma,k+1}(\bm{w}) = \bfH^{\gamma,k}(\vec{\bfv}^{\gamma,k} (\bm{w}); \bm{w}),
    \quad k \in \{0, \dots, K -1\},
\end{equation}
where $\vec{\bfv}^{\gamma,0}$ is computed from the cell-wise averages $\ufpn^\gamma(\cdot,0)$. 
The details of the operator $H^{\gamma,k}$ come from the the finite-volume discretization that is described in Appendix \ref{sec:space-time-disc}.
Define the discrete solution operators $\bar{\Phi}^\gamma$ and $\vec{\Phi}^\gamma$ by 
\begin{align}\label{eq:disc_solution_operator}
   \bar{\Phi}^\gamma: \bm{w} \mapsto \bar{\phi}_{\fpn}^\gamma(\cdot,\tf) 
   \qquand
      \vec{\Phi}^\gamma: \bm{w} \mapsto \vec{\phi}_{\fpn}^{\gamma,K},
\end{align}
where, for each $k \in \{0\ddd K\}$, $\bar{\phi}_{\fpn}^\gamma(\cdot,t_k)$ is the piecewise-constant (in space) approximation of $\phi_{\fpn}^\gamma(\cdot,t_k)$ and $\vec{\phi}_{\fpn}^{\gamma,K}$ is the corresponding vector of cell-averages computed using the same finite volume method in \eqref{eq:time_stepper}.  The operators in \eqref{eq:disc_solution_operator} are related by a linear, invertible reconstruction operator $\cE$; that is, $\bar{\Phi}^\gamma(\bm{w}) = \cE \vec{\Phi}^\gamma (\bm{w})$.

From \eqref{eq:cont-objective}, we obtain the discrete objective function
\begin{align}\label{eq:closed_objective_disc}
    \bar{J}(\bm{w}) 
    =  \mathbb{E}_{\gamma} [j( \bar{\Phi}^\gamma(\bm{w});\bar{\phi}^\gamma(\cdot,\tf))]
    = \mathbb{E}_{\gamma} [j( \cE \vec{\Phi}^\gamma(\bm{w}); \cE \vec{\phi}^{\gamma,K})].
\end{align}
where the piece-wise constant function $\bar{\phi}^\gamma(\cdot,t_k)$ with cell averages in $\vec{\phi}^{\gamma,t_k}$ approximates $\phi^\gamma(\cdot,k)$ with the same finite volume method.
We expect that $\bar{J}^\gamma$ is differentiable with respect to $\bm{w}$ and thus we choose a gradient descent-based optimizer to find optimal parameters $\bm{w}$. 

\begin{remark}
    The approach here is called \textit{discretize, then optimize} in PDE-constrainted optimization literature, see, e.g. \cite{Hinze2008}. Alternatively, one can formally derive the Fr\'echet derivative of $J$ with respect to $\bm{w}$ which yields an adjoint set of equations. The adjoint \fpn equations then need to be discretized to facilitate the optimization. This approach is called \textit{optimize, then discretize}. In general the approaches are not equivalent.
\end{remark}

\subsection{Gradient-based Optimization}

At each iteration, we approximate the gradient of $\bar{J}$ using a batch $B$ of training data with realizations $\gamma_b \in B$:
\footnote{The details for this sampling will be discussed in the numerical results.}  
\begin{align}
\label{eq:closed_objective_disc_batch}
    \bar{J}(\bm{w}) 
    \approx \bar{J}^B(\bm{w})
    = \frac{1}{|B|}\sum_{\gamma_b \in B} j( \bar{\Phi}^\gamma(\bm{w});\bar{\phi}^\gamma(\cdot,\tf)).
\end{align}
The gradient descent procedure is then 
\begin{align}\label{eq_sgd}
    \bm{w}^{\ell+1}=\bm{w}^{\ell} -\lambda \nabla_{\bm{w}} \bar{J}^B(\bm{w}^\ell),
\end{align}
where $\lambda>0$ is the learning rate and the initial weights $ \bm{w}^{\ell=0}$ are normally distributed. 

In practice, the gradient in \eqref{eq_sgd} is computed efficiently via automatic differentiation.   For the sake of completeness, we describe the underlying components of the computation.  First, $\nabla_{\bm{w}} \bar{J}^B =  [\frac{ \p\bar{J}^B}{\p  \bm{w}}]^\top$ is computed via the chain rule
\begin{align}
\label{dJbar-dw}
\frac{ \p\bar{J}^B}{\p  \bm{w}} (\bm{w}) 
= \frac{1}{|B|}\sum_{\gamma_b \in B} 
\cD j
(\bar{\Phi}^{\gamma_b}(\bm{w});\bar{\phi}^\gamma(\cdot,\tf))
\left[
 \frac{\p \bar{\Phi}^{\gamma_b}}{\p \bm{w}} (\bm{w})\right]
 = \frac{1}{|B|}\sum_{\gamma_b \in B} 
\cD j
(\bar{\Phi}^{\gamma_b}(\bm{w});\bar{\phi}^\gamma(\cdot,\tf))
\left[\cE
 \frac{\p \vec{\Phi}^{\gamma_b}}{\p \bm{w}} (\bm{w})\right],
\end{align}
where $\cD j$ is the Fr\'echet derivative of $j$, which in \eqref{dJbar-dw} is evaluated at $\bar{\Phi}^{\gamma_b} (\bm{w})$ and acts on $\cE
 \frac{\p \vec{\Phi}^{\gamma_b}}{\p \bm{w}} (\bm{w})$. For any functions $a,b,c \in L^2(X)$ 
\begin{equation}
    Dj(a;b)[c]
    = \lim_{\veps \to 0^+}
    \frac{d}{d\veps}
    j(a+\veps c;b)
    = \frac{1}{2\pi} \int_X  \left(a(\bm{x}) - b(\bm{x}) \right) c(\bm{x}) d \bm{x}.
\end{equation}
The remaining derivative needed in \eqref{dJbar-dw} is
\begin{equation}
    \frac{\p \vec{\Phi}^{\gamma_b}}{\p \bm{w}} 
= \sqrt{4 \pi} \left[\frac{\p \vec{\bfv}^{\gamma_b,K}}{\p \bm{w}}\right]_0^0,
\end{equation} 
which is computed recursively in $k$, starting with $k=K$ and using \eqref{eq:time_stepper}, as described at the end of Appendix \ref{sec:space-time-disc}

\subsection{Feature Selection and Rotational Invariance}

In this subsection, we introduce the notion of rotational invariance for the RTE and $\pn$ equations and establish sufficient conditions on $\Sig{f}$ for the \fpn equations to preserve this property.

\begin{definition}\label{def_rotation}
Given any orthogonal matrix $O\in\bbR^{3\times 3}$, the operator $\cR_O$ induced by $O$ is given by
\begin{equation}
    (\cR_O f)(\bm{x},\bm{\Omega}) = f(O x,O \bm{\Omega}), 
    \quad \text{for any $f: X \times \bbS^2 \to \bbR$}. 
\end{equation}
The operator $\cW_O$ acting on the corresponding moment $\bfv =\vint{\bfr f}$ is given by
\begin{equation}
\label{eq:W_O}
    (\cW_O \bfv)(\bm{x}) = \vint{{\bfr} \,(\cR_O f)(\bm{x},\cdot)}\:.
\end{equation}
\end{definition}
\begin{remark}
    In what follows, we may apply $\cR_O$ to a function of $\bm{x}$ only, in which case $\cR_O f(\bm{x}) = f(O\bm{x})$.  For vector-valued functions, $\cR_O$ is applied component-wise.
\end{remark}

\begin{lemma}
\label{lem_rot_u}
Let ${O}\in\bbR^{3\times 3}$ be an orthogonal matrix.  Then for each $\ell \in \{0 \ddd N\}$, there exists an orthogonal matrix $\bfZ^\ell_O\in\bbR^{(2\ell+1)\times (2\ell+1)}$ such that 
\begin{equation}
\label{eq:Z_O}
    \bfr_\ell (\bm{\Omega}) 
    =  \bfZ^\ell_O\bfr_\ell(O {\bm{\Omega}}) .
\end{equation}
Thus $\bfr (\bm{\Omega}) 
    =  \bfZ_O\bfr(O {\bm{\Omega}})$,
where $\bfZ = \operatorname{diag}(\bfZ_O^0 \ddd \bfZ_O^N) \in \bbR^{n\times n}$.
Moreover,
\begin{align}
	(\cW_O \bfv)(\bm{x})
    = (\bfZ_O \bfv)(O \bm{x}).
    \label{eq:rot_operator}
\end{align}

\end{lemma}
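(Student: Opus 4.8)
The plan is to reduce everything to the classical fact that the degree-$\ell$ spherical harmonics span the space $\mathcal{H}_\ell$ of restrictions to $\bbS^2$ of homogeneous harmonic polynomials of degree $\ell$, and that this space is invariant under precomposition with an orthogonal map. First I would record the structural step: for orthogonal $O$, the substitution $p \mapsto p\circ O$ preserves the degree of a homogeneous polynomial, and since $O^\top O = \bfI$ the Laplacian commutes with it, $\Delta(p\circ O) = (\Delta p)\circ O$, so harmonicity is preserved as well. Hence each component $\bm{\Omega} \mapsto r_\ell^m(O\bm{\Omega})$ again lies in $\mathcal{H}_\ell$, with no contribution from any other degree. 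Because $\bm{\Omega}\mapsto O\bm{\Omega}$ is a bijection of $\bbS^2$, the family $\{r_\ell^{m'}(O\,\cdot)\}_{m'}$ is the image of a basis of $\mathcal{H}_\ell$ under an invertible pullback and is therefore itself a basis of $\mathcal{H}_\ell$. Expanding each $r_\ell^m(\bm{\Omega})$ in this basis produces a matrix $\bfZ^\ell_O \in \bbR^{(2\ell+1)\times(2\ell+1)}$ with $\bfr_\ell(\bm{\Omega}) = \bfZ^\ell_O\,\bfr_\ell(O\bm{\Omega})$, and since degrees do not mix, stacking over $\ell$ yields the block-diagonal matrix $\bfZ_O = \operatorname{diag}(\bfZ_O^0 \ddd \bfZ_O^N)$ with $\bfr(\bm{\Omega}) = \bfZ_O\,\bfr(O\bm{\Omega})$.

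Next I would establish orthogonality of $\bfZ^\ell_O$ by observing that it is a change-of-basis matrix between two orthonormal bases. The surface measure on $\bbS^2$ is invariant under $O$ (as $O\bbS^2 = \bbS^2$ and $|\det O| = 1$), so the change of variables $\bm{\Omega}\mapsto O\bm{\Omega}$ gives $\int_{\bbS^2} r_\ell^m(O\bm{\Omega})\,r_\ell^{m'}(O\bm{\Omega})\,d\bm{\Omega} = \int_{\bbS^2} r_\ell^m(\bm{\Omega})\,r_\ell^{m'}(\bm{\Omega})\,d\bm{\Omega} = \delta_{mm'}$. Thus $\{r_\ell^{m'}(O\,\cdot)\}$ is orthonormal, and the relation $\delta_{mm'} = \langle r_\ell^m, r_\ell^{m'}\rangle$ expanded through $\bfZ^\ell_O$ forces $\bfZ^\ell_O(\bfZ^\ell_O)^\top = \bfI$. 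Consequently $\bfZ_O$ is orthogonal, which completes \eqref{eq:Z_O} and the assembly statement.

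For the moment identity \eqref{eq:rot_operator}, I would compute directly from the definitions. Starting from $(\cW_O\bfv)(\bm{x}) = \vint{\bfr\,(\cR_O f)(\bm{x},\cdot)} = \int_{\bbS^2}\bfr(\bm{\Omega})\,f(O\bm{x},O\bm{\Omega})\,d\bm{\Omega}$, I substitute $\bm{\Omega}' = O\bm{\Omega}$ (again using measure invariance) to obtain $\int_{\bbS^2}\bfr(O^{-1}\bm{\Omega}')\,f(O\bm{x},\bm{\Omega}')\,d\bm{\Omega}'$. Applying $\bfr(\bm{\Omega}) = \bfZ_O\,\bfr(O\bm{\Omega})$ at $\bm{\Omega} = O^{-1}\bm{\Omega}'$ gives $\bfr(O^{-1}\bm{\Omega}') = \bfZ_O\,\bfr(\bm{\Omega}')$; pulling the constant matrix $\bfZ_O$ out of the integral identifies the remainder as $\bfv(O\bm{x}) = \int_{\bbS^2}\bfr(\bm{\Omega}')\,f(O\bm{x},\bm{\Omega}')\,d\bm{\Omega}'$. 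Hence $(\cW_O\bfv)(\bm{x}) = \bfZ_O\,\bfv(O\bm{x}) = (\bfZ_O\bfv)(O\bm{x})$, as claimed.

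The crux of the argument is the first, representation-theoretic step: proving both the invariance of $\mathcal{H}_\ell$ and, crucially, the degree-preserving block-diagonal structure. I expect the harmonic-polynomial characterization to be the cleanest route precisely because it treats the full orthogonal group uniformly, covering improper transformations (reflections, and the inversion $-\bfI$, which acts as $(-1)^\ell$ on $\mathcal{H}_\ell$) as well as proper rotations; an argument phrased through Wigner $D$-matrices would cover only $SO(3)$ and would have to be supplemented by the parity action to reach the full $O(3)$ statement needed here. Once this structural fact and the invariance of the spherical measure are in hand, the orthogonality verification and the moment identity are routine changes of variables.
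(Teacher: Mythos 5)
Your proof is correct, and its overall skeleton coincides with the paper's: (i) invariance of the degree-$\ell$ harmonic subspace under orthogonal pullback gives the existence of $\bfZ_O^\ell$ and the block-diagonal structure, (ii) orthonormality of $\bfr_\ell$ plus invariance of the surface measure on $\bbS^2$ gives $\bfZ_O^\ell(\bfZ_O^\ell)^\top = \bfI$, and (iii) the identity $(\cW_O\bfv)(\bm{x}) = \bfZ_O\bfv(O\bm{x})$ follows by a change of variables and pulling the constant matrix out of the integral. The one substantive difference is in step (i): the paper does not prove the invariance at all, but cites it (Theorem 1.1.7 of Dai--Xu, via Lemma 1 of Garrett--Hauck), whereas you derive it from the characterization of spherical harmonics as restrictions of homogeneous harmonic polynomials, using that precomposition with $O$ preserves homogeneity and commutes with the Laplacian. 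This makes your argument self-contained, and your closing remark is apt: the harmonic-polynomial route handles the full group $O(3)$ uniformly, including reflections and the inversion, which is exactly the generality the lemma claims (arbitrary orthogonal $O$) and which a Wigner $D$-matrix argument restricted to $SO(3)$ would not supply without a separate parity argument. The paper's citation-based step buys brevity; your version buys transparency about why the block structure and the $O(3)$ generality hold.
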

\begin{proof}
We follow the proof from \cite[Lemma 1]{garrett2016eigenstructure}. 
For each polynomial order $\ell$, $\operatorname{span}\{r_{\ell}^{-\ell} \ddd r_{\ell}^0  \ddd  r_{\ell}^{\ell}\}$ is  invariant under orthogonal transformations  in ${\bm{\Omega}}$ \cite[Theorem 1.1.7]{dai2013approximation}. Thus there exists a matrix $\bfZ_O^\ell \in\bbR^{(2\ell+1)\times(2\ell+1)}$ such that \eqref{eq:Z_O} holds.  Let $\hat{\bm{\Omega}} = O \bm{\Omega}$ so that $d{\hat{\bm{\Omega}}} = d\bm{\Omega}$.
Since the components of $\bfr_\ell$ are orthonormal,
\begin{align}
I = \int_{\bbS^2} \bfr_\ell(\bm{\Omega}) \bfr_\ell (\bm{\Omega}) ^\top d \bm{\Omega}
= \int_{\bbS^2} \bfZ_O^\ell \bfr_\ell(\hat{\bm{\Omega}}) \bfr_\ell(\hat{\bm{\Omega}}) ^\top(\bfZ_O^\ell)^\top d \hat{\bm{\Omega}} 
=\bfZ_O^\ell (\bfZ_O^\ell)^\top .
\end{align}

It follows that $\bfZ_O^\ell$ is orthogonal and hence so is $\bfZ_O$.  To show \eqref{eq:rot_operator}, direct application of \eqref{eq:W_O} and \eqref{eq:Z_O} gives
\begin{equation}\label{eq:TQ_1}
(\cW_O \bfv)(\bm{x}) 
= \int_{\bbS^2} \bfr(\bm{\Omega}) f(O \bm{x}, O \bm{\Omega}) \intD \bm{\Omega} \\
= \bfZ_O \int_{\bbS^2} \bfr(\hat{\bm{\Omega}}) f(O \bm{x}, \hat{\bm{\Omega}}) \intD \hat{\bm{\Omega}}\\
= \bfZ_O \bfv (O \bm{x}).
\end{equation}
\end{proof}

\begin{remark}
Recursive algorithms to compute $\bfZ_O$ are given in~\cite{ivanic1996rotation, choi1999rapid, blanco1997evaluation}, and an explicit example for $N=1$ is provided in \cite[Eq.~(11)]{green2003}.
\end{remark}

\begin{proposition}
The RTE \eqref{eq:RTE} is rotationally invariant in the sense that $\psi$ satisfies \eqref{eq:RTE} with source $S$ if and only if, for any orthogonal matrix $O \in \bbR^{3 \times 3}$, $\cR_O \psi$ satisfies \eqref{eq:RTE} with a source $\cR_O S$ and cross-sections $\cR_O \sig{a}$ and $\cR_O \sig{s}$.  Similarly,
the \pn equations \eqref{eq:PN} are rotationally invariant in the sense that $\upn$ satisfies \eqref{eq:PN} with source $\bfs$, if and only if for any orthogonal matrix $O \in \bbR^{3 \times 3}$, $\cW_O \upn$ satisfies \eqref{eq:PN} with a source $\cW_O \bfs$ and cross-sections $\cR_O \sig{a}$ and $\cR_O \sig{s}$. 
\end{proposition}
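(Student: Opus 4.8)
The plan is to treat the two claims in parallel and, in each case, to verify only the forward implication term by term; the converse then follows automatically by replacing $O$ with its transpose $O^\top$, since $\cR_{O^\top}\cR_O$ and $\cW_{O^\top}\cW_O$ are both the identity. Throughout I will lean on the factorization $\cW_O = \bfZ_O\,\cR_O$ (with $\cR_O$ acting componentwise) supplied by Lemma \ref{lem_rot_u}, together with the fact that $\bfZ_O$ is a constant orthogonal matrix that commutes with $\partial_t$ and with pointwise scalar multiplication.

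For the RTE, suppose $\psi$ solves \eqref{eq:RTE} and set $\tilde\psi = \cR_O\psi$. The time derivative and the zeroth-order terms are immediate: $\partial_t\tilde\psi = \cR_O(\partial_t\psi)$, and $(\cR_O\sig{a})\tilde\psi = \cR_O(\sig{a}\psi)$, and similarly for $\sig{s}$ and for the source $\cR_O S$. For the streaming term I would apply the chain rule, $\grad_{\bm{x}}(\cR_O\psi) = O^\top\cR_O(\grad_{\bm{x}}\psi)$, and then contract with $\bm{\Omega}$ using $\bm{\Omega}^\top O^\top = (O\bm{\Omega})^\top$ to obtain $\bm{\Omega}\cdot\grad_{\bm{x}}\tilde\psi = \cR_O(\bm{\Omega}\cdot\grad_{\bm{x}}\psi)$. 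The one genuinely structural point is the rotational invariance of the collision operator $\cQ$: writing out $\cQ\tilde\psi$, substituting $\bm{\eta} = O\bm{\Omega}'$, and using both the invariance of the spherical measure under $O$ and the identity $\bm{\Omega}\cdot\bm{\Omega}' = (O\bm{\Omega})\cdot(O\bm{\Omega}')$, one finds $\cQ\tilde\psi = \cR_O(\cQ\psi)$. Collecting the terms shows $\tilde\psi$ solves \eqref{eq:RTE} with data $\cR_O S$, $\cR_O\sig{a}$, $\cR_O\sig{s}$; the rotated initial and inflow conditions follow because $O$ maps $\Gamma^-$ to itself.

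For the \pn equations, set $\tilde\bfu = \cW_O\upn = \bfZ_O\,\cR_O\upn$. The absorption term is handled exactly as above, and the scattering term reduces to showing $\bfG\bfZ_O = \bfZ_O\bfG$; this holds because $\bfG$ acts as the scalar $(1-g_\ell)$ on each degree-$\ell$ block while $\bfZ_O$ is block diagonal with blocks $\bfZ_O^\ell$, so the two commute blockwise. The crux of the whole argument is the advection term. Here I would first establish the transformation law for the flux matrices,
\begin{equation}
\label{eq:flux-transform}
\bfZ_O^\top\bfA^{(i)}\bfZ_O = \sum_{k} O_{ki}\,\bfA^{(k)},
\end{equation}
which follows from $\bfZ_O^\top\bfr(\bm{\Omega}) = \bfr(O\bm{\Omega})$ (a restatement of \eqref{eq:Z_O} using orthogonality of $\bfZ_O$), the definition $\bfA^{(m)} = \vint{\Omega^{(m)}\bfr\bfr^\top}$, and the change of variables $\bm{\eta} = O\bm{\Omega}$, under which the $i$-th component transforms as $\Omega^{(i)} = \sum_k O_{ki}\eta^{(k)}$.

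Combining \eqref{eq:flux-transform} with the chain rule $\partial_{x_i}\tilde\bfu = \bfZ_O\sum_j O_{ji}\,\cR_O(\partial_{x_j}\upn)$ reduces the target identity $\bfA\cdot\grad_{\bm{x}}\tilde\bfu = \bfZ_O\,\cR_O(\bfA\cdot\grad_{\bm{x}}\upn)$ to the algebraic statement $\sum_i O_{ji}\,\bfZ_O^\top\bfA^{(i)}\bfZ_O = \bfA^{(j)}$, which is exactly \eqref{eq:flux-transform} contracted against $O_{ji}$ together with $\sum_i O_{ji}O_{ki} = (OO^\top)_{jk} = \delta_{jk}$. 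Assembling the four terms then gives $\partial_t\tilde\bfu + \cT\tilde\bfu = \bfZ_O\,\cR_O\bfs = \cW_O\bfs$, as required. I expect \eqref{eq:flux-transform}, and the correct bookkeeping of the two index contractions against the orthogonal matrix $O$, to be the main obstacle; the remaining terms are routine once the factorization $\cW_O = \bfZ_O\cR_O$ and the blockwise commutation $\bfG\bfZ_O=\bfZ_O\bfG$ are in hand.
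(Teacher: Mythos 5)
Your proof is correct, and its skeleton matches the paper's: both arguments reduce everything to showing that the relevant operators commute with $\cR_O$ (resp.\ $\cW_O$), both handle the collision term $\cQ$ by the same change of variables on $\bbS^2$ using invariance of the measure and of $\bm{\Omega}\cdot\bm{\Omega}'$, and both dispatch the scattering term via the blockwise commutation $\bfG\bfZ_O=\bfZ_O\bfG$. The one place where you genuinely reorganize the argument is the advection term. The paper never writes down a transformation law for the flux matrices; instead it keeps the integral representation $\bfA\cdot\grad_{\bm{x}}\upn = \vint{\bfr\,\bfr^\top\,\bm{\Omega}\cdot\grad_{\bm{x}}\upn}$ and pushes $\cW_O$ through it in one chain of equalities, absorbing the change of variables, the chain rule, and the relation $\bfr(\bm{\Omega})=\bfZ_O\bfr(O\bm{\Omega})$ simultaneously. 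You instead factor out a standalone covariance identity, $\bfZ_O^\top\bfA^{(i)}\bfZ_O=\sum_k O_{ki}\bfA^{(k)}$, and then finish with pure index algebra using $OO^\top=I$. Both routes rest on exactly the same two ingredients (Lemma \ref{lem_rot_u} and the spherical change of variables), so neither is more general; what your version buys is transparency and reusability --- the explicit flux-matrix transformation law isolates where orthogonality of $O$ is actually used, and makes the contraction bookkeeping checkable line by line, whereas the paper's integral manipulation is more compact but hides the two separate contractions against $O$ inside a single display. You are also slightly more careful than the paper on two minor points: you note that the converse direction follows by applying the forward implication to $O^\top$ (using $\bfZ_{O^\top}=\bfZ_O^\top$, so $\cW_{O^\top}\cW_O$ is the identity), and you check the initial and inflow data, which requires the implicit assumption (shared by the paper) that the spatial domain $X$ is invariant under $O$ so that $O$ maps $\Gamma^-$ to itself.
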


\begin{proof}
    These result are well known; see e.g., \cite{alldredge2019regularized}.  We provide a proof here for completeness.  For the RTE, the result amounts to proving that $\cR_O$ commutes with the operators $\bm{\Omega} \cdot \grad_{\bm{x}}$ and  $\cQ$.  Let $\hat{\bm{x}} = O\bm{x}$, $\hat{\bm{\Omega}} = O \bm{\Omega}$, and  $\hat{\bm{\Omega}}' = O \bm{\Omega}'$. Direct calculation gives
    \begin{equation}
    \begin{split}
    \cR_O( \bm{\Omega} \cdot \grad_{x} \psi)(\bm{x},\bm{\Omega},t)
    &= \hat{\bm{\Omega}} \cdot \grad_{\hat{\bm{x}}} \psi(\hat{\bm{x}},\hat{\bm{\Omega}},t)
        = O \bm{\Omega} \cdot O \grad_{\bm{x}} \psi(\hat{\bm{x}},\hat{\bm{\Omega}},t)
        = \bm{\Omega} \cdot \grad_{\bm{x}} (\cR_O \psi)(\bm{x},\bm{\Omega},t)  
    \end{split}
    \end{equation}
and, since $d \hat{\bm{\Omega}}' = d\bm{\Omega}'$,
    \begin{equation}
    \begin{split}
        \cR_O \left(\int_{\bbS^2} g(\bm{\Omega} \cdot \bm{\Omega}') \psi(\bm{x},\bm{\Omega}',t) d \bm{\Omega}'\right)
        &= \int_{\bbS^2} g(\hat{\bm{\Omega}} \cdot \bm{\Omega}') \psi(\hat{\bm{x}},\bm{\Omega}',t) d \bm{\Omega}'
        = \int_{\bbS^2} g(\hat{\bm{\Omega}} \cdot \hat{\bm{\Omega}}') \psi(\hat{\bm{x}},\hat{\bm{\Omega}}',t) d \hat{\bm{\Omega}}' \\
        &= \int_{\bbS^2} g(\bm{\Omega} \cdot \bm{\Omega}') \psi(\hat{\bm{x}},\hat{\bm{\Omega}}',t) d \bm{\Omega}'
        = \int_{\bbS^2} g(\bm{\Omega} \cdot \bm{\Omega}') (\cR_O\psi)(\bm{x},\bm{\Omega}',t) d \bm{\Omega}'.
    \end{split}
    \end{equation}
Similarly, for the \pn equations, the result amounts to proving that the operator $\cW_O$ commutes with the operators $\bfA \cdot \grad_{\bm{x}}$ and $\bfG$. Direction calculation, The using the relation $\bfZ_O\bfr(O {\bm{\Omega}})$, gives
    \begin{equation}
    \label{eq:W-Adotgrad-commute}
    \begin{split}
        \cW_O (\bfA \cdot \grad_{\bm{x}} \upn)(\bm{x},t)
        &= \cW_O \left(\int_{\bbS^2} \bfr(\hat{\bm{\Omega}}) \bfr(\hat{\bm{\Omega}})^\top \hat{\bm{\Omega}} \cdot \grad_{\bm{x}} \upn (\bm{x},t ) d \hat{\bm{\Omega}} \right) 
        = \bfZ_O \int_{\bbS^2} \bfr(\hat{\bm{\Omega}}) \bfr(\hat{\bm{\Omega}})^\top \hat{\bm{\Omega}} \cdot \grad_{\hat{\bm{x}}} \upn (\hat{\bm{x}},t ) d \hat{\bm{\Omega}} \\
        &= \int_{\bbS^2} \bfr(\bm{\Omega}) \bfr^\top(\bm{\Omega})  \bm{\Omega} \cdot \grad_{x} \bfZ_O \upn (\hat{\bm{x}},t ) d \bm{\Omega}
        = \bfA \cdot \grad_{\bm{x}} (\cW_O \upn )(\bm{x},t ),
    \end{split}
    \end{equation}
and, because $\bfZ_O$ and $\bfG$ have the same block structure and each block of $\bfG$ is a multiple of the identity, $\bfZ_O$ and $\bfG$ commute.  Hence
    \begin{equation}
    \label{eq:W-G-commute}
    \begin{split}
    \cW_O (\bfG \upn)(\bm{x},t) 
    = \bfZ_O \bfG \upn(\hat{\bm{x}},t) 
    = \bfG \bfZ_O \upn(\hat{\bm{x}},t) 
    = \bfG (\cW_O \upn)(\bm{x},t).
    \end{split}
    \end{equation}
\end{proof}

When the filter strength is constant, the \fpn solution inherits the rotational invariance of the underlying \pn solution. 
However, when the filter strength depends on the \fpn solution, rotational invariance must be explicitly enforced.

\begin{theorem}
\label{theo_rot_invariant_part_reg}
Suppose that for any $\bfv:X \to \bbR^n$ sufficiently smooth, $\Sig{f}$ satisfies
\begin{equation}
\label{eq:filter-invariance-assumption}
    \Sig{f}(\bm {\xi}(\bfv),\bfs;\bm{w}) 
    = \Sig{f}(\bfZ_O \bm {\xi}(\bfv), \bfZ_O \bfs; \bm{w}).
\end{equation}
Then the \fpn equations \eqref{eq:FPN} are rotationally invariant in the sense that $\ufpn$ satisfies \eqref{eq:FPN} with source $\bfs$ if and only if for any orthogonal matrix $O \in \bbR^{3 \times 3}$, $\cW_O \ufpn$ satisfies \eqref{eq:FPN} with a source $\cW_O \bfs$ and cross-sections $\cR_O \sig{a}$ and $\cR_O \sig{s}$. 
\end{theorem}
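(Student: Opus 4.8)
The plan is to apply $\cW_O$ to every term of the \fpn equation \eqref{eq:FPN} and verify that the result is precisely the \fpn equation for $\cW_O\ufpn$ with source $\cW_O\bfs$ and cross-sections $\cR_O\sig{a},\cR_O\sig{s}$. Since $\cW_O$ acts only in space and angle, it commutes with $\p_t$. The operator $\cT$ has already been handled: the Proposition shows $\cW_O$ commutes with $\bfA\cdot\grad_{\bm{x}}$ and with $\bfG$, and because $\sig{a}$ is scalar one checks directly that $\cW_O(\sig{a}\ufpn)(\bm{x}) = (\cR_O\sig{a})(\bm{x})(\cW_O\ufpn)(\bm{x})$, and similarly for the scattering term. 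Hence the entire new burden is to show that the filter term transforms correctly, i.e.\ that $\cW_O(\sig{f}\bfF\ufpn)$ equals $\hat{\sig{f}}\,\bfF\,(\cW_O\ufpn)$, where $\hat{\sig{f}}$ is the filter strength that $\Sig{f}$ produces from the rotated state and source.

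I would first dispatch the matrix algebra. Because $\bfF$ is block-diagonal with each block a scalar multiple of the identity (its entries $-\log f(\ell/(N+1))$ depend only on $\ell$), it shares the block structure of $\bfG$, so the argument in \eqref{eq:W-G-commute} applies verbatim and $\bfZ_O$ commutes with $\bfF$. Using that $\sig{f}$ is scalar together with \eqref{eq:rot_operator}, $\cW_O(\sig{f}\bfF\ufpn)(\bm{x}) = \bfZ_O\,\sig{f}(O\bm{x},t)\,\bfF\,\ufpn(O\bm{x},t) = \sig{f}(O\bm{x},t)\,\bfF\,\bfZ_O\ufpn(O\bm{x},t) = \sig{f}(O\bm{x},t)\,\bfF\,(\cW_O\ufpn)(\bm{x})$. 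It then remains to identify the scalar $\sig{f}(O\bm{x},t) = \Sig{f}(\bm{\xi}(\ufpn)(O\bm{x}),\bfs(O\bm{x},t);\bm{w})$ with the filter strength of the rotated problem evaluated at $\bm{x}$.

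The crux is a covariance identity for the feature map \eqref{eq_features}: evaluated on the rotated state and with the rotated cross-sections, $\bm{\xi}(\cW_O\bfv)(\bm{x}) = \bfZ_O\,\bm{\xi}(\bfv)(O\bm{x})$. I would verify this column by column. The first column follows from \eqref{eq:W-Adotgrad-commute}, which gives $\bfA\cdot\grad_{\bm{x}}(\cW_O\bfv)(\bm{x}) = \bfZ_O(\bfA\cdot\grad_{\bm{x}}\bfv)(O\bm{x})$. For the second and third columns the rotated cross-sections $(\cR_O\sig{t})(\bm{x}) = \sig{t}(O\bm{x})$ and $(\cR_O\sig{s})(\bm{x}) = \sig{s}(O\bm{x})$ are scalars, while $\bfG$ and $\bfI$ commute with $\bfZ_O$ by the same block-structure argument, so each column reproduces $\bfZ_O$ applied to the corresponding column of $\bm{\xi}(\bfv)$ evaluated at $O\bm{x}$. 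The point to track carefully is that the features for the rotated equation must be built from the rotated cross-sections, which is exactly what makes the scalars land at $O\bm{x}$.

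With the covariance in hand, the filter strength of the rotated problem at $\bm{x}$ is $\Sig{f}(\bfZ_O\bm{\xi}(\ufpn)(O\bm{x}),\bfZ_O\bfs(O\bm{x},t);\bm{w})$, and the hypothesis \eqref{eq:filter-invariance-assumption} collapses this to $\Sig{f}(\bm{\xi}(\ufpn)(O\bm{x}),\bfs(O\bm{x},t);\bm{w}) = \sig{f}(O\bm{x},t)$, matching the scalar obtained above. Assembling all terms shows $\cW_O\ufpn$ solves the rotated \fpn system, and applying the same argument with $O^\top = O^{-1}$ yields the converse. I expect the only genuine obstacle to be the bookkeeping in the covariance step, in particular ensuring that the cross-sections entering the features are the rotated ones and that $\bfF$ (like $\bfG$) commutes with $\bfZ_O$; everything else then follows mechanically from the already-established commutation relations.
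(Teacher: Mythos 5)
Your proposal is correct and follows essentially the same route as the paper's proof: establish that $\bfZ_O$ commutes with $\bfF$ via the same block-structure argument used for $\bfG$, reduce the problem to identifying the scalar $\sig{f}(O\bm{x},t)$ with the rotated problem's filter strength, prove the feature covariance identity $\bm{\xi}(\cW_O\bfv)(\bm{x}) = \bfZ_O\,\bm{\xi}(\bfv)(O\bm{x})$ column by column using \eqref{eq:W-Adotgrad-commute} and \eqref{eq:W-G-commute}, and invoke the hypothesis \eqref{eq:filter-invariance-assumption} to close the argument. Your ordering (covariance first, then the hypothesis) and your explicit treatment of the converse via $O^\top=O^{-1}$ are minor presentational refinements of the same argument, not a different approach.
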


\begin{proof}
    The proof is similar to the proof for the \pn equations, but with the additional filter term.  
    As before, let $\hat{\bm{x}} = O\bm{x}$.  Then as with $\bfG$, $\bfZ_O$ and $\bfF$ commute.  
    Hence
    \begin{equation}
        \cW_O \big(\sig{f}(\bm{x},t) \bfF \ufpn(\bm{x},t)\big)
        = \sig{f}(\hat{\bm{x}},t)  \bfF \bfZ_O\ufpn(\hat{\bm{x}},t)
        =  \sig{f}(\hat{\bm{x}},t)  \bfF (\cW_O\ufpn)(\bm{x},t),
    \end{equation}
    where $\sig{f}$ is given by \eqref{eq:parameterized-filter}.  
    By the assumption on $\Sig{f}$ in \eqref{eq:filter-invariance-assumption}, for any parameter vector $\bm{w}$,
    \begin{equation}
    \label{sigf-commute}
        \sig{f}(\hat{\bm{x}},t) 
        = \Sig{f}\big(\bm{\xi}(\ufpn)(\hat{\bm{x}},t), \bfs(\hat{\bm{x}}); \bm{w} \big)
        = \Sig{f}(\bfZ_O\bm{\xi}(\ufpn)(\hat{\bm{x}},t), \bfZ_O\bfs(\hat{\bm{x}});\bm{w}).
    \end{equation} 
     However, for any sufficiently smooth function $\bfv \colon X \to \bbR^n$
    \begin{equation}
        \label{eq:Sigf-Z}
    \begin{split}
    Z_O\bm{\xi}(\bfv)(\hat{\bm{x}}) 
    &= \left[Z_O\bfA \cdot \grad_{\bm{x}} \bfv (\hat{\bm{x}}) 
    \quad  
    \sig{t}(\hat{\bm{x}})  Z_O\bfv (\hat{\bm{x}})  
    \quad  
    \sig{s}(\hat{\bm{x}})  Z_O(\bfG-\bfI)\bfv (\hat{\bm{x}})  \right]
    \\
    &=\left[\cW_O (\bfA \cdot \grad_{\bm{x}} \bfv) (\bm{x})
    \quad  
    \cW_O (\sig{t} \bfv) (\bm{x})  
    \quad  
    \cW_O(\sig{s}(\bfG-\bfI)\bfv)) (\bm{x})  \right]
    \end{split}
\end{equation}
where the last line in \eqref{eq:Sigf-Z} follows from the calculations in \eqref{eq:W-Adotgrad-commute} and \eqref{eq:W-G-commute}.  In addition, $\bfZ_O\bfs(\hat{\bm{x}}) = (\cW_O \bfs)(\bm{x})$ by definition.
  Hence \eqref{sigf-commute} becomes
    \begin{equation}
        \sig{f}(\hat{\bm{x}},t)
        = \Sig{f}(\bm{\xi}(\bfZ_O \ufpn)(\hat{\bm{x}},t), \bfZ_O \bfs(\hat{\bm{x}});\bm{w})
        = \Sig{f}(\bm{\xi}( \cW_O \ufpn)(\bm{x},t), (\cW_O\bfs)(\bm{x});\bm{w})
    \end{equation}  
    Thus the proof is complete.
\end{proof}

Given $\bfu = [\bfu_0 \ddd \bfu_\ell \ddd \bfu_N]^\top \in \bbR^1 \times \dots \times \bbR^{2\ell+1} \times \dots \times \bbR^{2N+1} = \bbR^{n}$, let $|\bfu_\ell| = \sqrt{\bfu_\ell^\top \bfu_\ell}$ be the Euclidean norm of $\bfu_\ell$.
To satisfy the condition in \eqref{eq:filter-invariance-assumption}, we pre-process the features $\bm{\xi}(\ufpn)$ using the operator $\bfP\colon\bbR^n\rightarrow\bbR^{N+1}$, given by 
\begin{align}\label{eq:rot_preprocessor} 
    \bfP \bfu = \left[|\bfu_0|,\dots,|\bfu_N|\right]^\top.
\end{align}

\begin{proposition}
  For any neural network $\cN$  with parameters $\bm{w}$, the ansatz 
   \begin{equation}
    \Sig{f}(\bm {\xi}(\cdot),\bfs;\bm{w}) 
    =  \cN (\bfP \bm {\xi}(\cdot), \bfP \bfs ;\bm{w})
   \end{equation}
   fulfills \eqref{eq:filter-invariance-assumption}.
\end{proposition}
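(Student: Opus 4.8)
The plan is to reduce the desired invariance \eqref{eq:filter-invariance-assumption} to a single norm-preservation property of the preprocessor $\bfP$: namely, that $\bfP(\bfZ_O\bfu) = \bfP\bfu$ for every $\bfu \in \bbR^n$ and every orthogonal $O \in \bbR^{3\times 3}$. Once this is established, the result is immediate, because substituting the ansatz turns both sides of \eqref{eq:filter-invariance-assumption} into evaluations of the same fixed function $\cN(\cdot,\cdot;\bm{w})$ on identical arguments; the network and its weights play no role beyond being a fixed map.

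First I would recall from Lemma \ref{lem_rot_u} that $\bfZ_O = \operatorname{diag}(\bfZ_O^0 \ddd \bfZ_O^N)$ is block diagonal, with each block $\bfZ_O^\ell \in \bbR^{(2\ell+1)\times(2\ell+1)}$ orthogonal. Writing $\bfu = [\bfu_0 \ddd \bfu_N]^\top$ in the degree-graded decomposition $\bbR^n = \bbR^1 \times \dots \times \bbR^{2N+1}$, the block structure gives $(\bfZ_O\bfu)_\ell = \bfZ_O^\ell\bfu_\ell$ for each $\ell$. Since each $\bfZ_O^\ell$ is orthogonal, it preserves the Euclidean norm, so that $|(\bfZ_O\bfu)_\ell| = |\bfZ_O^\ell\bfu_\ell| = |\bfu_\ell|$. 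By the definition \eqref{eq:rot_preprocessor} of $\bfP$, which records exactly these block norms, this yields the claimed identity $\bfP(\bfZ_O\bfu) = \bfP\bfu$.

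Next I would apply this fact to both arguments of the ansatz. The feature matrix $\bm{\xi}(\bfv) \in \bbR^{n\times 3}$ is transformed by $\bfZ_O$ columnwise, and $\bfP$ likewise acts columnwise, so the block norms of each of the three columns are individually preserved, giving $\bfP(\bfZ_O\bm{\xi}(\bfv)) = \bfP\bm{\xi}(\bfv)$. Applying the same identity to the source vector $\bfs$ gives $\bfP(\bfZ_O\bfs) = \bfP\bfs$. Substituting into the ansatz then yields
\begin{equation}
    \Sig{f}(\bfZ_O\bm{\xi}(\bfv),\bfZ_O\bfs;\bm{w})
    = \cN\big(\bfP(\bfZ_O\bm{\xi}(\bfv)),\,\bfP(\bfZ_O\bfs);\bm{w}\big)
    = \cN\big(\bfP\bm{\xi}(\bfv),\,\bfP\bfs;\bm{w}\big)
    = \Sig{f}(\bm{\xi}(\bfv),\bfs;\bm{w}),
\end{equation}
which is precisely \eqref{eq:filter-invariance-assumption}.

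I do not anticipate a genuine analytic obstacle here; the proof is essentially bookkeeping. The one point requiring care is the interaction between the block-graded structure of $\bfZ_O$ and the norm extraction performed by $\bfP$: one must verify that $\bfP$ collapses each degree-$\ell$ block to a quantity (the block norm) that is invariant under the orthogonal block $\bfZ_O^\ell$, rather than mixing information across different degrees $\ell$. I would also state explicitly the convention that $\bfP$ is applied columnwise to the $n\times 3$ feature matrix, so that the argument for $\bm{\xi}$ reduces to three independent applications of the vector case.
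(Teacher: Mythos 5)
Your proof is correct and follows essentially the same route as the paper's: both reduce \eqref{eq:filter-invariance-assumption} to the identity $\bfP\bfZ_O\bfu = \bfP\bfu$, which follows from the block-diagonal structure of $\bfZ_O$ with orthogonal blocks (Lemma \ref{lem_rot_u}), and then substitute into the ansatz. Your write-up merely spells out the block-norm preservation and the columnwise action of $\bfP$ on $\bm{\xi}$ in more detail than the paper does.
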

\begin{proof}    
    Due to the block structure of $\bfZ_O$ and orthogonality of each block (see Lemma \ref{lem_rot_u}) it follows that $\bfP \bfu =\bfP \bfZ_O\bfu$ for any $\bfu \in \bbR^n$.
    Hence $\bfP \bm {\xi}(\cdot) = \bfP \bfZ_O \bm {\xi}(\cdot)$ and 
    \begin{align}
   \Sig{f}(\bm {\xi}(\cdot),\bfs;\bm{w}) 
    =  \cN (\bfP \bm {\xi}(\cdot), \bfP \bfs ;\bm{w})
    = \cN (\bfP \bfZ_O \bm {\xi}(\cdot), \bfP \bfZ_O \bfs ;\bm{w})
    =  \Sig{f}(\bfZ_O \bm {\xi}(\cdot),\bfZ_O \bfs;\bm{w}),
    \end{align}
which is exactly  \eqref{eq:filter-invariance-assumption}.
\end{proof}

\section{Numerical Results}
\label{sec:numerical results}
In the following, we perform numerical tests with \fpn approximations in 1-D and 2-D geometries, which are both described in Appendix \ref{app:reduction}.
 We explore several test cases to evaluate the performance of the data-driven \fpn equations.  All numerical results can be reproduced using the GitHub repository \url{https://github.com/benplumridge/NN_FPN}.  

We model the data-driven filter $\Sig{f}$ by applying the map $\bfP$ from \eqref{eq:rot_preprocessor} to each component of the feature vector in \eqref{eq_features}, obtaining the preprocessed features 
\begin{equation}
    \bfP \bm{\xi}(\bfv) 
    = [\bfP \bfA \cdot \grad_{\bm{x}} \bfv
    \quad
    \sig{t} \bfP \bfv
    \quad  
    \sig{s} \bfP (\bfG-\bfI)\bfv ].
\end{equation} 
In general there are $3N+3$ features ($N+1$ for each column of $\bm{\xi}(\bfv)$).  However, for isotropic scattering, which is assumed in all of the problems below, only the $\ell=0$ component of the vector $\sig{s} \bfP (\bfG-\bfI)\bfv$ is nonzero.  

Any neural network architecture with non-negative scalar output and the preprocessor $\bfP$ is a feasible choice for the proposed framework. 
Thus as a baseline comparison, we train a scalar filter, i.e. $\Sig{f} = w\in\mathbb{R}_+$, which is independent of the state of the moment system. 
The corresponding filter values are displayed in Table \ref{tab:sigf_const}.
For the neural network based and constant filter, each ansatz is trained for a fixed moment order $N$; once trained it can be used in any simulation with the same moment order $N$ and same space-time discretization.

\begin{table}[H]
\centering
\begin{tabular}{@{}c ccc c cccc@{}}
\toprule
& \multicolumn{3}{c}{1-D} & \phantom{abc} & \multicolumn{4}{c}{2-D} \\
\cmidrule{2-4} \cmidrule{6-9}
$N$ & 3 & 7 & 9 & & 3 & 5 & 7 & 9 \\
\midrule
$\Sigma_f $ & 7.74 & 6.47 & 4.76 & & 31.75 & 27.27 & 16.52 & 13.62 \\
\bottomrule
\end{tabular}
\caption{Optimal scalar constant $\Sigma_f = w \in \mathbb{R}^+$ for 1-D and 2-D. Values are obtained using a degenerate network consisting of a single bias, trained on the same data as the neural network ansatz.  For testing in 1-D the network is trained multiple times, and the values above are one representative instance.}
 \label{tab:sigf_const}
\end{table}

As validation metrics we measure relative error of the particle concentration to the reference solution for the P$_N$ solution ($e_N$), the \fpn solution with neural network filter strength $(e_N^\text{nn})$, and the \fpn solution with constant, but trainable, filter strength ($e_N^\text{const}$):
    \begin{equation}
        e_N = \frac{\left\|\phi_{\pn} - \phi \right\|_{L^2(X)}}{\left\|\phi \right\|_{L^2(X)}} 
        ,\qquad
        e_N^\text{nn} := \frac{\left\|\phi_{\fpn}^{\text{nn}} - \phi \right\|_{L^2(X)}}{\left\|\phi \right\|_{L^2(X)}}
        ,\quand
        e_N^\text{const} := \frac{\left\|\phi_{\fpn}^{\text{const}} - \phi \right\|_{L^2(X)}}{\left\|\phi \right\|_{L^2(X)}},
    \end{equation}
where the reference $\phi$ is computed using a high-order $\pn$ solution and the same space-time discretization and the other solutions.  We also report the ratios
\begin{equation}
    r_N^\text{nn} = \frac{e_N^\text{nn}}{e_N}  
    \qquand
    r_N^\text{const}    = \frac{e_N^\text{const}}{e_N}
\end{equation}
to quantity the improvement provided by the filtering over the standard $\pn$ method.

\subsection{1-D Tests} \label{sec:numerical_1D}

In slab geometries, the \pn and \fpn equations can be reduced to equations in one spatial variable $z \in [\zL,\zR]$. In this setting, the kinetic distribution $\psi = \psi(z,\mu)$, where $\mu = \Omega^{(3)}$ (see \eqref{eq:omega}), is approximated by the expansion $\psifpn(z,\mu) = \tilde{\bfp} ^\top(\mu) \bfv(z,t)$, where $\tilde{\bfp}: [-1,1] \to \bbR$ is a vector-valued functions whose components are the Legendre polynomials normalized with respect to  $L^2[-1,1]$ and $\bfv = [v_0 \ddd v_N]^\top$ satisfies the 1-D \fpn equations
\begin{equation}
    \p_t \bfv + \tilde{\bfA} \p_z \bfv + \sig{a} \bfv + \sig{s} \tilde{\bfG} \bfv + \sig{f} \tilde{\bfF} \bfv = \tilde{\bfs},
\end{equation}
with matrices $\tilde{\bfA}$, $\tilde{\bfG}$, and $\tilde{\bfF}$ that are defined in Appendix \ref{subsec:reduction-1D}.  The physical cross-sections and isotropic source $\tilde{\bfs} = [\tilde{s}_0 \ddd \tilde{s}_N]^\top$ are all functions of $z$, while the filter strength $\sig{f}$ depends on $z$ and $t$ through the state $\bfv$ and source $\tilde{\bfs}$.

We test the data-driven \fpn solver for $N \in \{3,7,9\}$.  In both training and test cases, the initial condition and source are isotropic; that is $v_\ell|_{t=0} =\tilde{s}_\ell=0$ for all $\ell >0$.  Thus we only need to specify $v_0|_{t=0}$ and $\tilde{s}_0$. Moreover, except for Reed's Problem, all components of $v$ are zero on the boundary (vacuum boundary condition).

\subsubsection{Data Sampling Strategy}
All simulation setups to generate training data use the spatial domain $[\zL,\zR]=[-1,1]$ with zero boundary conditions for all components of $\bfv$ on both sides of the domain.  
We specify the random variable $\gamma$ that constitutes the data sampling strategy to train the data-driven filter for the 1-D test cases. 
Specifically, $\gamma$ describes the choice of initial condition for $v_0$ and source $s_0$, selecting from
\begin{enumerate}[label=(\alph*)]
    \item a centered Gaussian with standard deviation $0.1$ as initial condition:
    \begin{align}\label{eq_gauss}
v_0(z,t=0) &=  \frac{1}{\sqrt{2 \pi \varsigma^2}} \exp\left(-\frac{z^2}{2\varsigma^2}\right), \quad \varsigma=0.1
 \end{align}
and zero source;
    \item the indicator function on the interval $[-0.25,0.25]$ as initial condition:
    \begin{equation}
       v_0(z,t=0) = \begin{cases}
           0, & |z| > 0.25,\\
           1, & |z| \leq 0.25,
       \end{cases} 
    \end{equation}
    and zero source;
    \item a bump function centered at $0$ as initial condition: 
    \begin{equation}
       v_0(z,t=0) = \begin{cases}
           0, & |z| > 0.5,\\
           \cos (\pi z), & |z| \leq 0.5,
       \end{cases} 
    \end{equation} and zero source;
    \item the indicator function on the interval $[-0.25,0.25]$ and a source term given by
   \begin{equation}
    \tilde{s}_0(z)
    =2\mathbbm{1}_{z<-0.75} + 2\mathbbm{1}_{z>0.75} + \mathbbm{1}_{-0.25<z<0.25}.
   \end{equation}
\end{enumerate}
These simulation setups are illustrated in Figure \ref{fig:IC1D}.
All setups are simulated until final time $\tf=0.5$.
The value of $\gamma$ also indicates the value for the scattering and absorption cross section.  For each of the four initial condition and source setups above, constant values of $\sig{s}$ and $\sig{a}$ are chosen randomly:
\begin{equation}
\sig{s} \sim \cU(0, 1), \quad \sig{a} \sim \cU(0, 1 - \sig{s}),
\end{equation}
where $\cU(a,b)$ denotes the uniform distribution on the interval $(a,b)$.

\begin{figure}[t!]
    \centering
    \begin{subfigure}{0.23\textwidth}
        \centering
        \includegraphics[width=\linewidth]{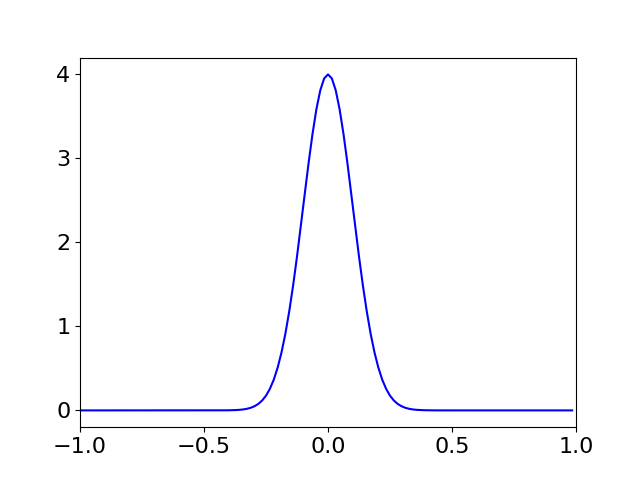}
        \caption{Gaussian}
        \label{fig:gauss}
    \end{subfigure}
    \hfill
    \begin{subfigure}{0.23\textwidth}
        \centering
        \includegraphics[width=\linewidth]{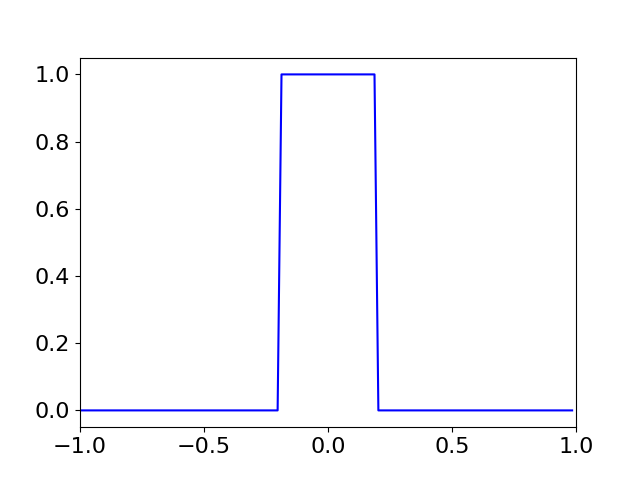}
        \caption{Square-Wave}
        \label{fig:square}
    \end{subfigure}
    \begin{subfigure}{0.23\textwidth}
        \centering
        \includegraphics[width=\linewidth]{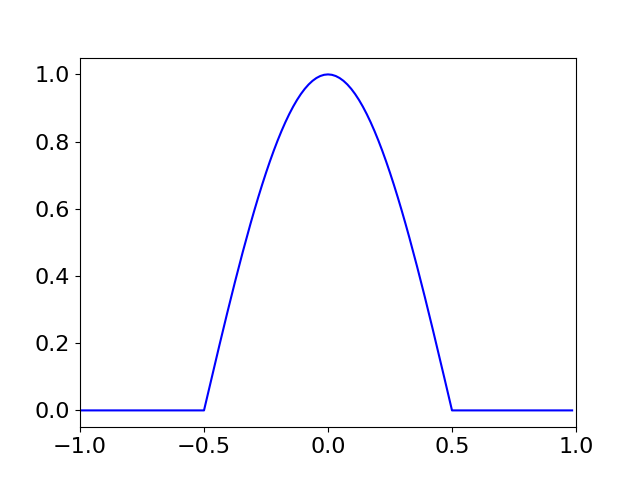}
        \caption{Bump function}
        \label{fig:bump}
    \end{subfigure}
    \hfill
    \begin{subfigure}{0.23\textwidth}
        \centering
        \includegraphics[width=\linewidth]{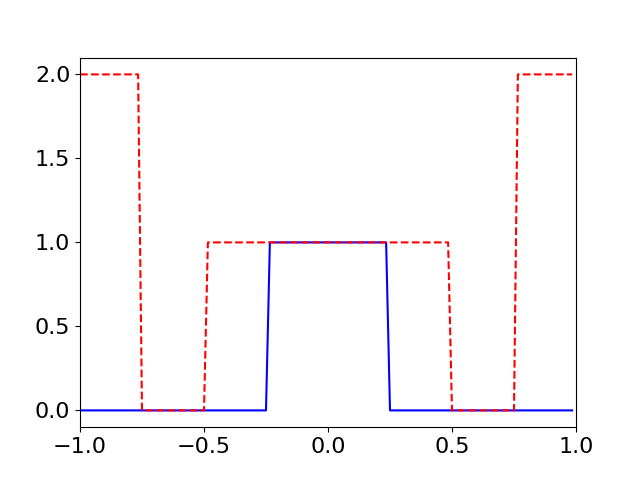}
        \caption{Discontinuous Source}
        \label{fig:disc_source}
    \end{subfigure}
   \caption{Simulation setups for the 1-D data generation. 
   The blue solid line denotes the initial data and the red dotted line denotes the source. 
}
    \label{fig:IC1D}
\end{figure}

\subsubsection{Neural Network Architecture and Training}
We train the filter strength $\Sig{f}$ for $N \in \{3,7,9\}$, modeling the filter strength with a fully connected feed-forward neural network with a single hidden layer and scalar output. 
ReLu activation functions are applied to both the hidden and output layers.  
The features are preprocessed and normalized before being fed into the network. 
The weights and biases of the hidden and output layers, $\bm{W}^{(1)}, \bm{b}^{(1)}$ and $\bm{W}^{(2)}, \bm{b}^{(2)}$ are initialized from uniform distributions: 

\begin{equation}
    \bm{W}^{(1)} \sim \mathcal{U}(-n_{\text{f}}^{-1/2}, \, n_{\text{f}}^{-1/2}), \quad
    \bm{b}^{(1)} \sim \mathcal{U}(-n_{\text{f}}^{-1/2}, \, n_{\text{f}}^{-1/2}),
\end{equation}
\begin{equation}
    \bm{W}^{(2)} \sim \mathcal{U}(-n_{\text{h}}^{-1/2}, \, n_{\text{h}}^{-1/2}), \quad
    \bm{b}^{(2)} \sim \mathcal{U}(-n_{\text{h}}^{-1/2}, \, n_{\text{h}}^{-1/2}),
\end{equation}
where $n_{\text{f}}$ is the number of input features (which includes $2N+3$ solution features and one source feature) and $n_{\text{h}}$ is the number of hidden neurons.
Table \ref{tab:1D_params} shows the network and training parameters. 
The networks are trained using the Adam optimizer \cite{Kingma2017} in  with weight decay between the layers.  

\begin{table}[H]
\centering
\resizebox{\textwidth}{!}{
\begin{tabular}{cccccccc}
\toprule
input features $(n_{\text{f}})$ &hidden layers & hidden width $(n_{\text{h}})$ & learning rate & batch size & number of epochs  & weight decay\\
\midrule
2N + 4 &1 & 50 & 0.1 & 64 & 200 & 1e-5 \\
\bottomrule
\end{tabular}
}
\caption{The table shows parameters used in 1-D training. The learning rates are chosen by an initial hyperparameter search. }
\label{tab:1D_params}
\end{table}

In each iteration, each of the simulation setups shown in Figure \ref{fig:IC1D} is assigned $\frac{\text{batch size}}{4}$ independently sampled realizations of $\sig{a}$ and $\sig{s}$.
For each of these configurations, we compute a high-order P$_{127}$ reference solution and an \fpn solution using the data-driven filter strength.
Both \fpn and the reference P$_{127}$ system use grid spacings of $\dz = \frac{1}{64}$ and $\Delta t = \frac{1}{128}$ for the space-time discretization. 
We repeat this process 10 times, producing 10 models for each $N \in \{3,7,9\}$. 
In the following tests, we plot particle concentrations from one representative iteration and report the sample mean of the relative errors $r_N^{\text{nn}}$ over all 10 runs.

\subsubsection{Test Case: Gaussian}
The Gaussian test case features a localized initial condition in a purely scattering medium.  The spatial domain is [-1,1] and solutions are run to $\tf = 0.5$ and $\tf=1.0$.  The cross-sections are $\sig{s} = 0.5$ and $\sig{a} = 0$.  The initial condition is a Gaussian of the form in \eqref{eq_gauss}, but with $\varsigma=0.05$. 
We compare the data-driven \fpn simulation for $N \in \{3,7,9\}$ to a P$_{127}$ reference solution.
We observe in Figure \ref{tab:gauss} that the data-driven \fpn equation with the neural network-based filter $\Sig{f}$ yields a final time solution for $\phi_{\fpn}^{\text{nn}}(\cdot,t_{\tf})$ that matches the high-order reference simulation well for $N \in \{3,7,9\}$, whereas the low-order \pn simulations exhibit oscillatory behavior.  
While the constant filter reduces the \pn error significantly in all cases, the neural network-based filter solution achieves greater improvements.

\begin{figure}[H]
    \centering

    \begin{subfigure}[b]{0.9\textwidth}
        \centering
        \begin{tabular}{ccc}
            $N=3$ & $N=7$ & $N=9$ \\
            \includegraphics[width=0.3\textwidth]{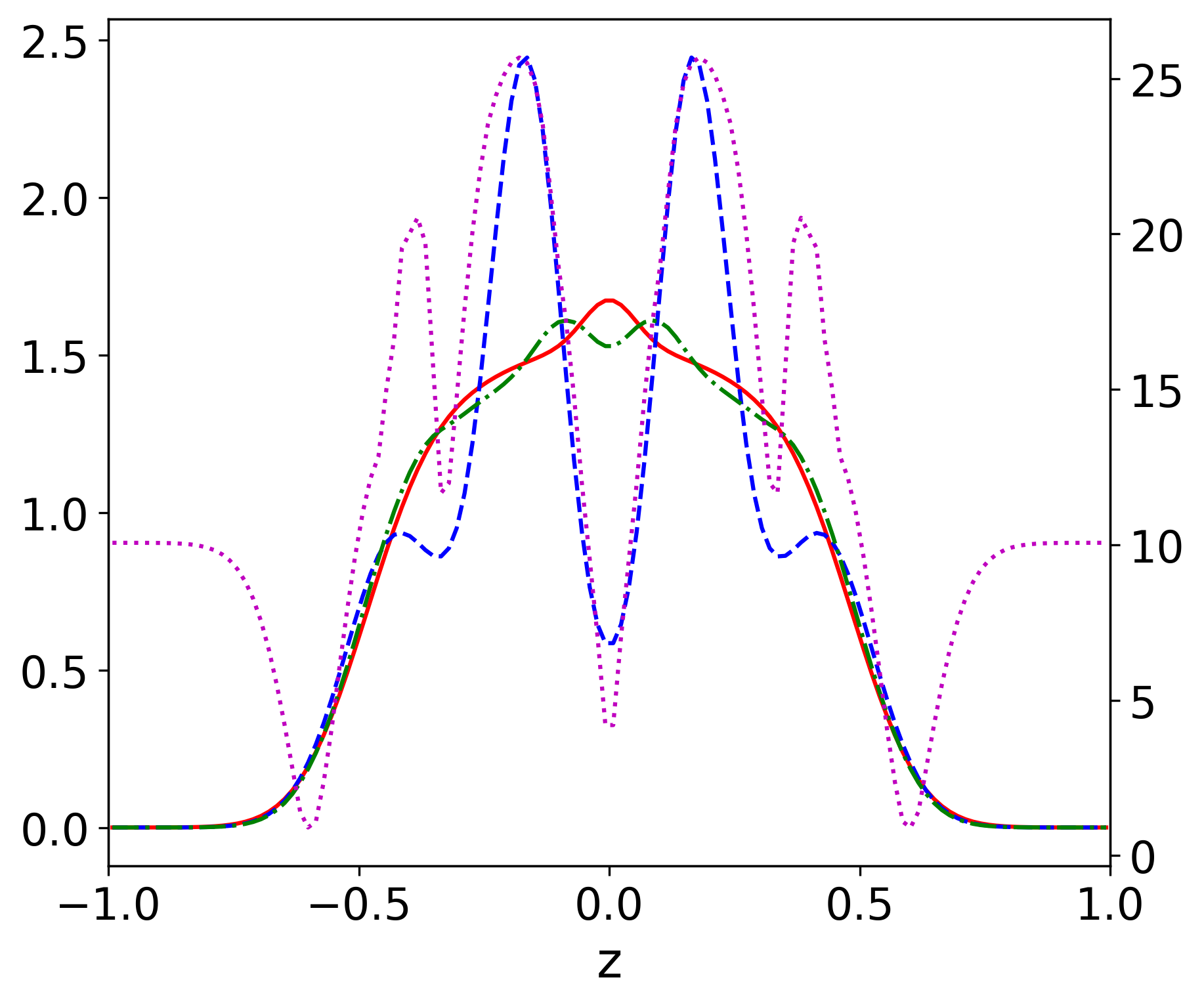} &
            \includegraphics[width=0.3\textwidth]{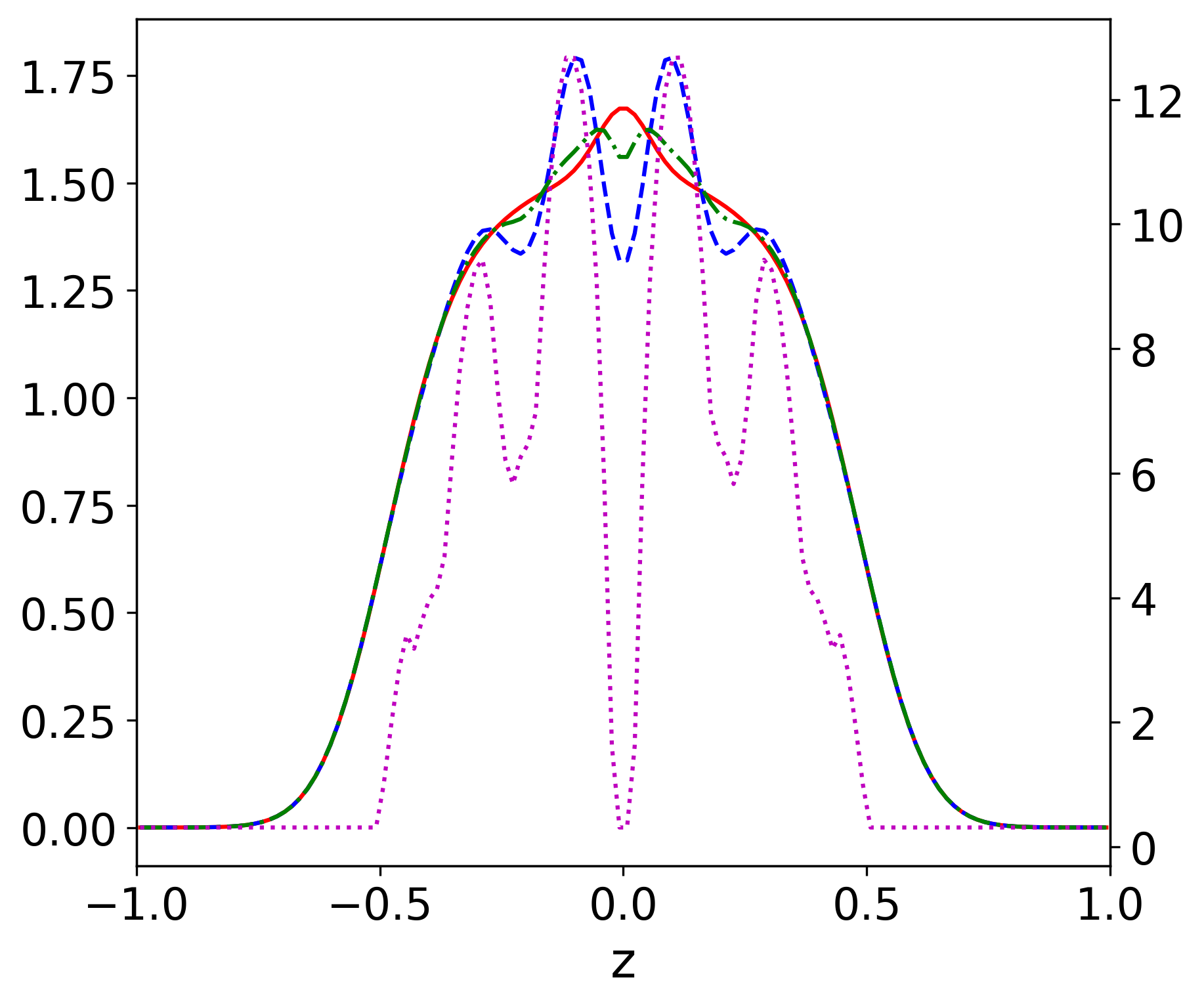} &
            \includegraphics[width=0.3\textwidth]{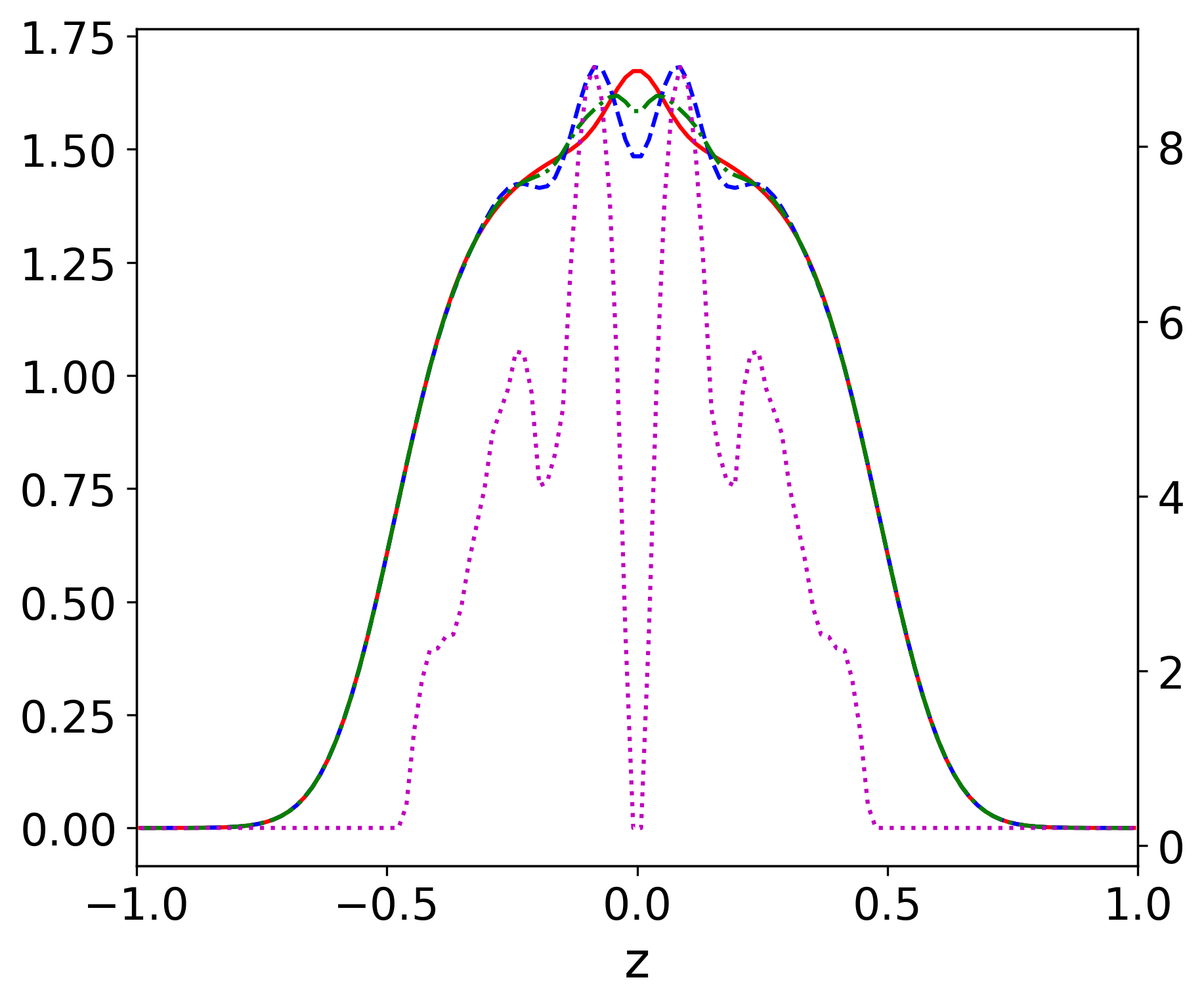} \\
            \includegraphics[width=0.3\textwidth]{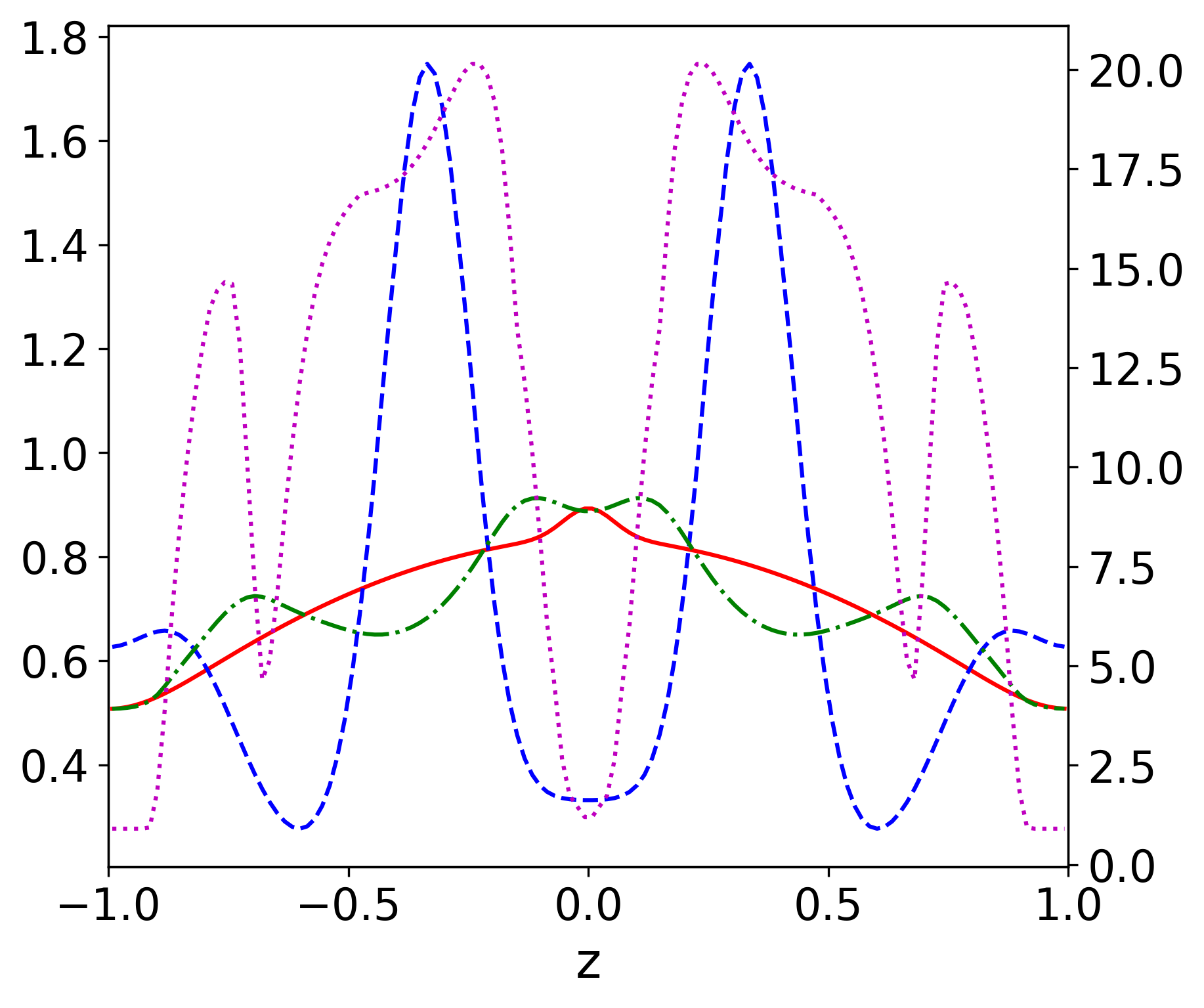} &
            \includegraphics[width=0.3\textwidth]{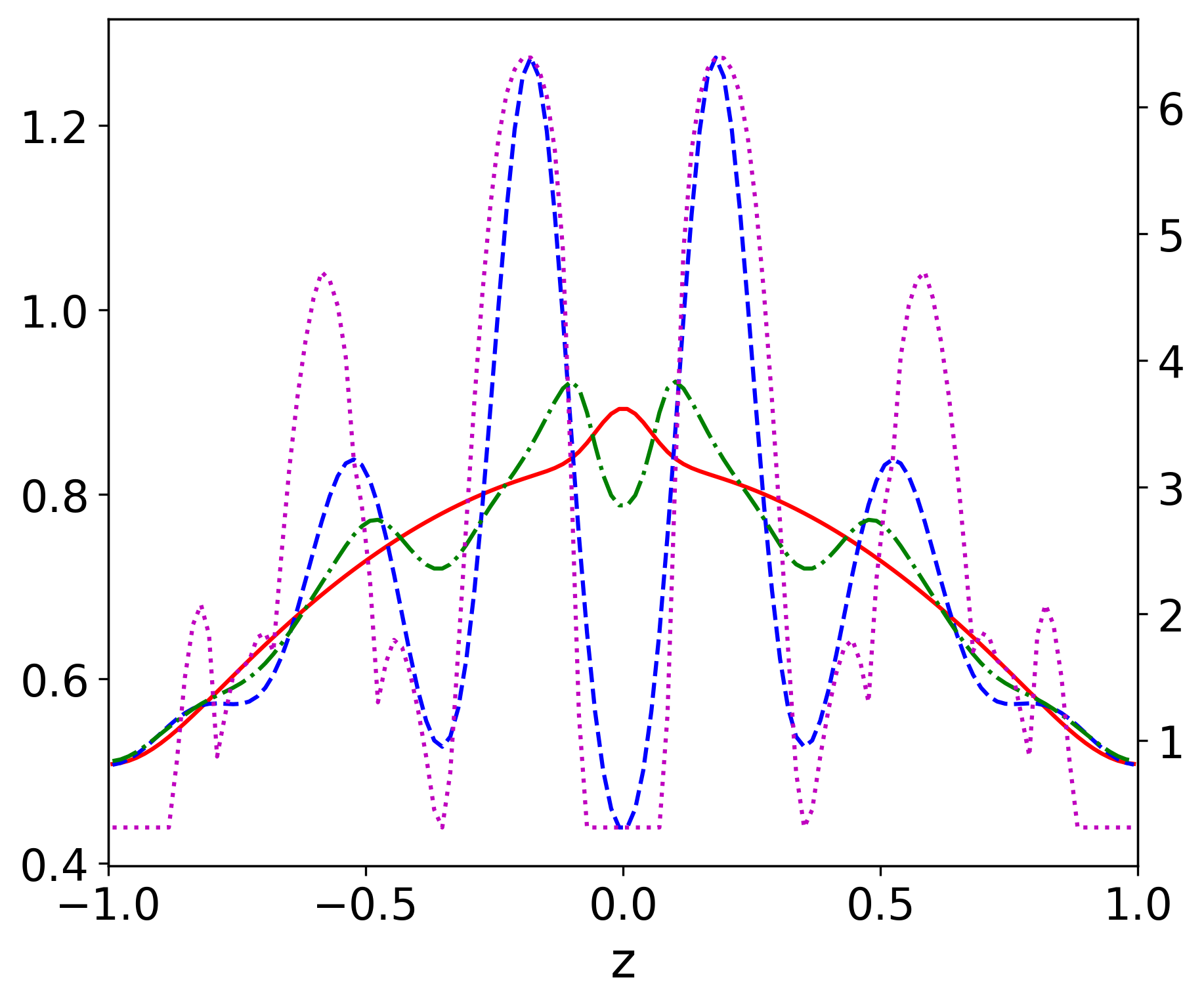} &
            \includegraphics[width=0.3\textwidth]{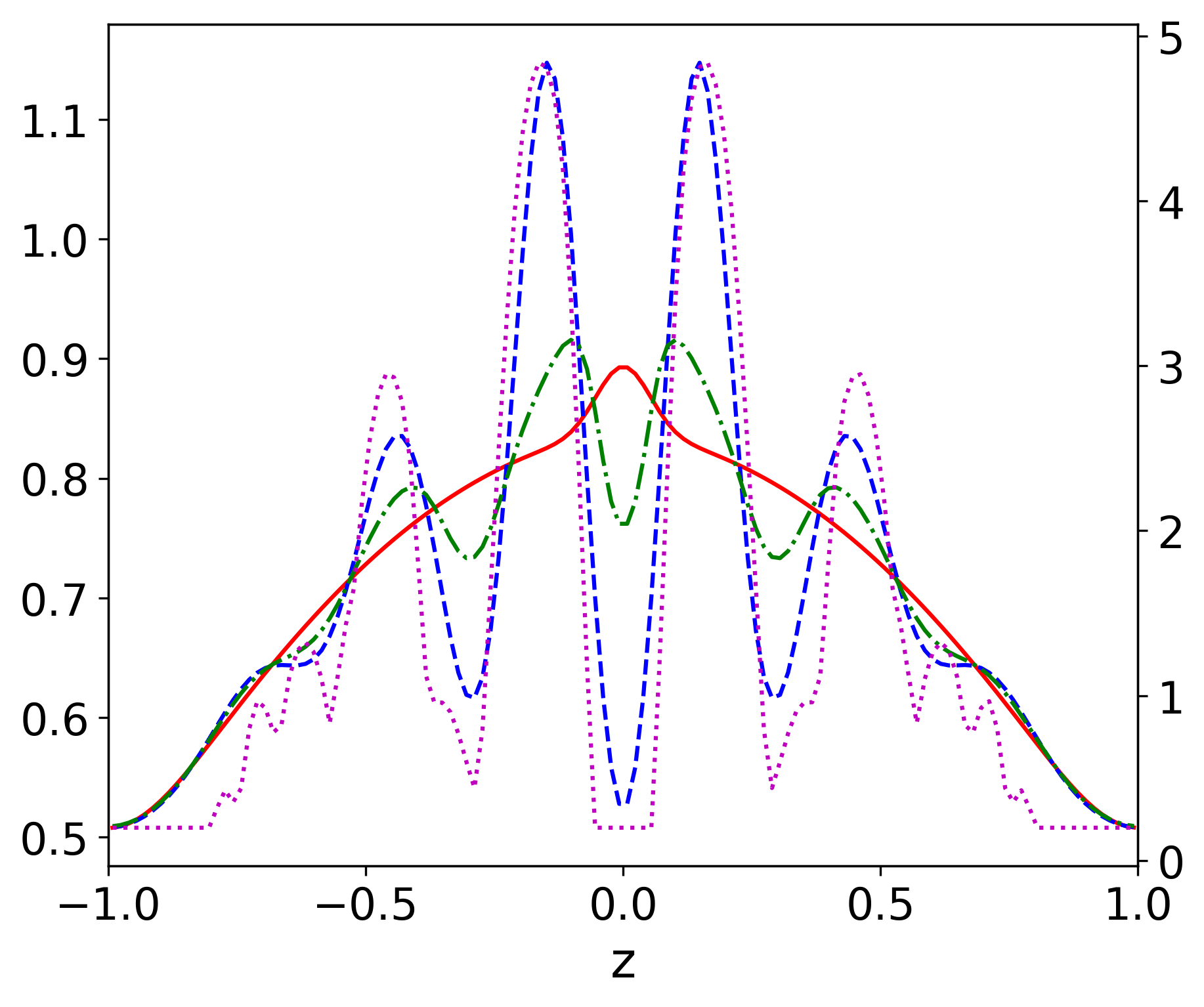} \\
         \multicolumn{3}{c}{\includegraphics[width=0.5\textwidth]{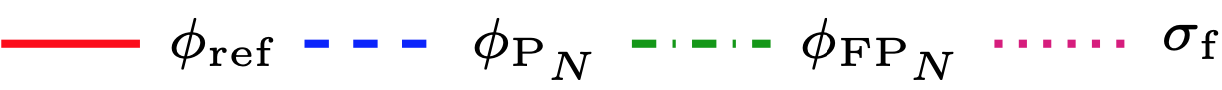}} \\
        \end{tabular}
        \caption{Particle concentrations at $\tf=0.5$ (top row) and $\tf=1.0$ (bottom row) for $N=3,7,9$.
        In each plot, the scale for the particle concentrations is shown on the left and the scale for $\sig{f}$ is shown on the right.}
    \end{subfigure}

    \vspace{1em}

    \begin{subfigure}[b]{\textwidth}
        \centering
        
\resizebox{\textwidth}{!}{
        \begin{tabular}{@{}c ccc c ccc@{}}
        \toprule
        & \multicolumn{3}{c}{Neural Network ($r_N^{\text{nn}}$)} & \phantom{abc} & \multicolumn{3}{c}{Constant ($r_N^{\text{const}}$)} \\
        \cmidrule{2-4} \cmidrule{6-8}
        $\tf$  & $N=3$ & $N=7$ & $N=9$ & & $N=3$ & $N=7$ & $N=9$ \\
        \midrule
        0.5 & $\bm{0.100 \pm 0.006}$ & $\bm{0.240 \pm 0.001}$ & $\bm{0.388 \pm 0.002}$ & & $0.592 \pm 0.002$ & $0.643 \pm 0.002$ & $0.748 \pm 0.000$ \\
        1.0 & $\bm{0.151 \pm 0.004}$ & $\bm{0.198 \pm 0.000}$ & $\bm{0.295 \pm 0.002}$ & & $0.338 \pm 0.003$ & $0.357 \pm 0.002$ & $0.464 \pm 0.000$ \\
        \bottomrule
        \end{tabular}}
         \caption{Ratio of the \fpn relative to the P$_N$ error for the neural network filter strength ($r_N^{\text{nn}}$) and constant filter strength ($r_N^{\text{const}}$). Smaller numbers are better, and  bold denotes which trainable filter that does better for each choice of $N$ and $\tf$.}
        \end{subfigure}

    \caption{(Gaussian) 
    The neural network ansatz consistently outperforms the constant ansatz in all cases.
    }
    \label{tab:gauss}
\end{figure}

\subsubsection{Test Case: Vanishing Cross-Section}
 The vanishing cross-section problem features non-smooth initial data in a purely scattering medium with smooth, but large variations in $\sig{s}$.
In particular, $\sig{s}$ is close to zero in the center of the spatial domain $[-1,1]$ and large at the boundaries.  
The initial data and scattering cross-sections are \cite{Buet2004} 
\begin{subequations}
\begin{align}
v_0(z) &=  
\begin{cases}
1, & z \in (-0.2,0.2),\\
0, & z \in [-1,0,-0.2] \cup [0.2,1.0]
\end{cases}\\
\sig{s}(z) &= 100z^4.
\end{align}
\end{subequations}

We compare the data-driven \fpn simulation for $N \in \{3,7,9\}$ with the P$_{127}$ reference solution.  The results, which are given in 
 Figure \ref{tab:van_cs}, show that the constant filter produces the smallest error in all but one case. While the neural network filter does reduce the error with respect to the \pn solution, the error in the center of the domain, where the $\sig{s}$ is small is still significant.

\begin{figure}[H]
    \begin{subfigure}[b]{0.9\textwidth}
        \centering
        \begin{tabular}{ccc}
            $N=3$ & $N=7$ & $N=9$ \\
            
          \includegraphics[width=0.3\textwidth]{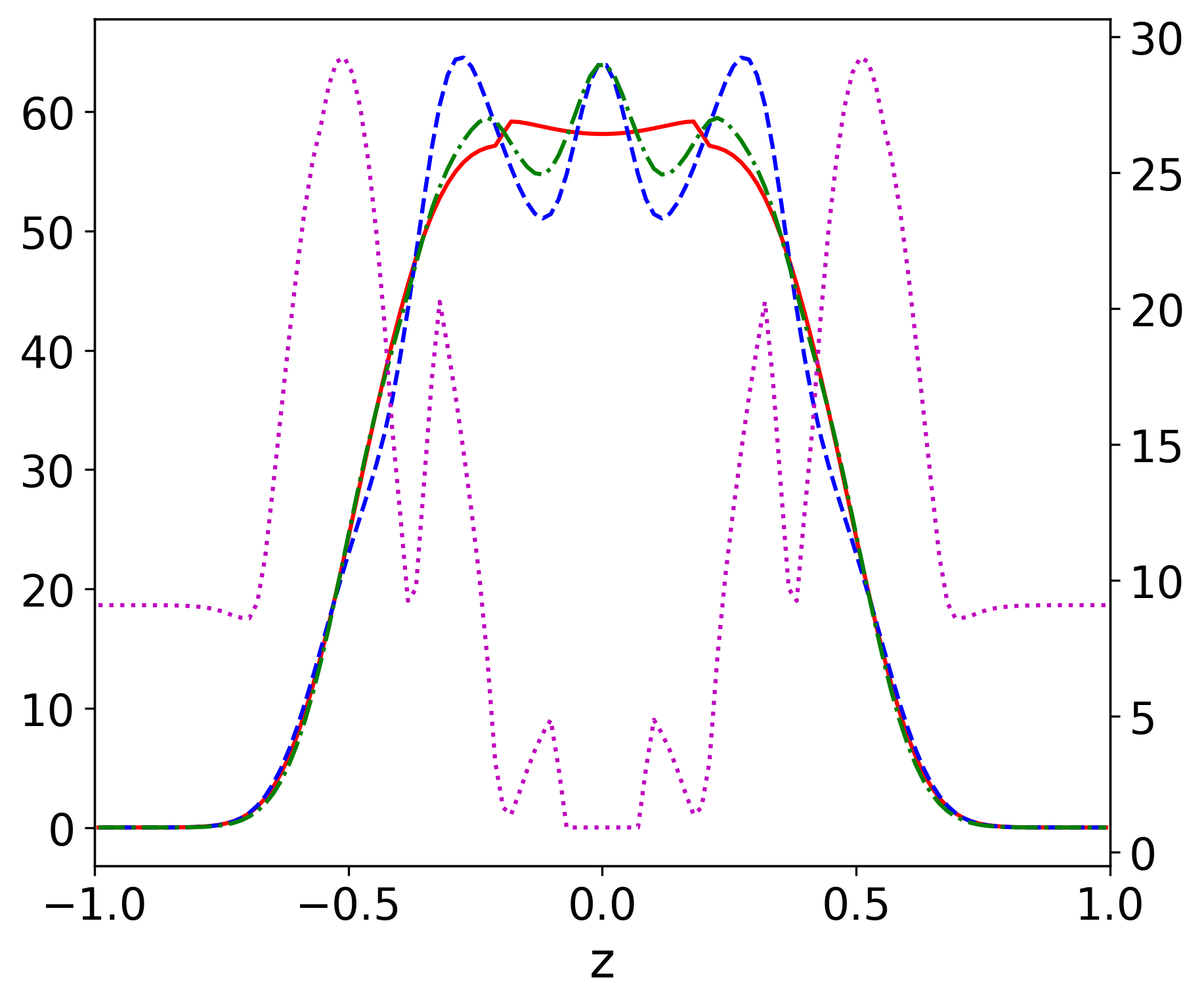}
             & \includegraphics[width=0.3\textwidth]{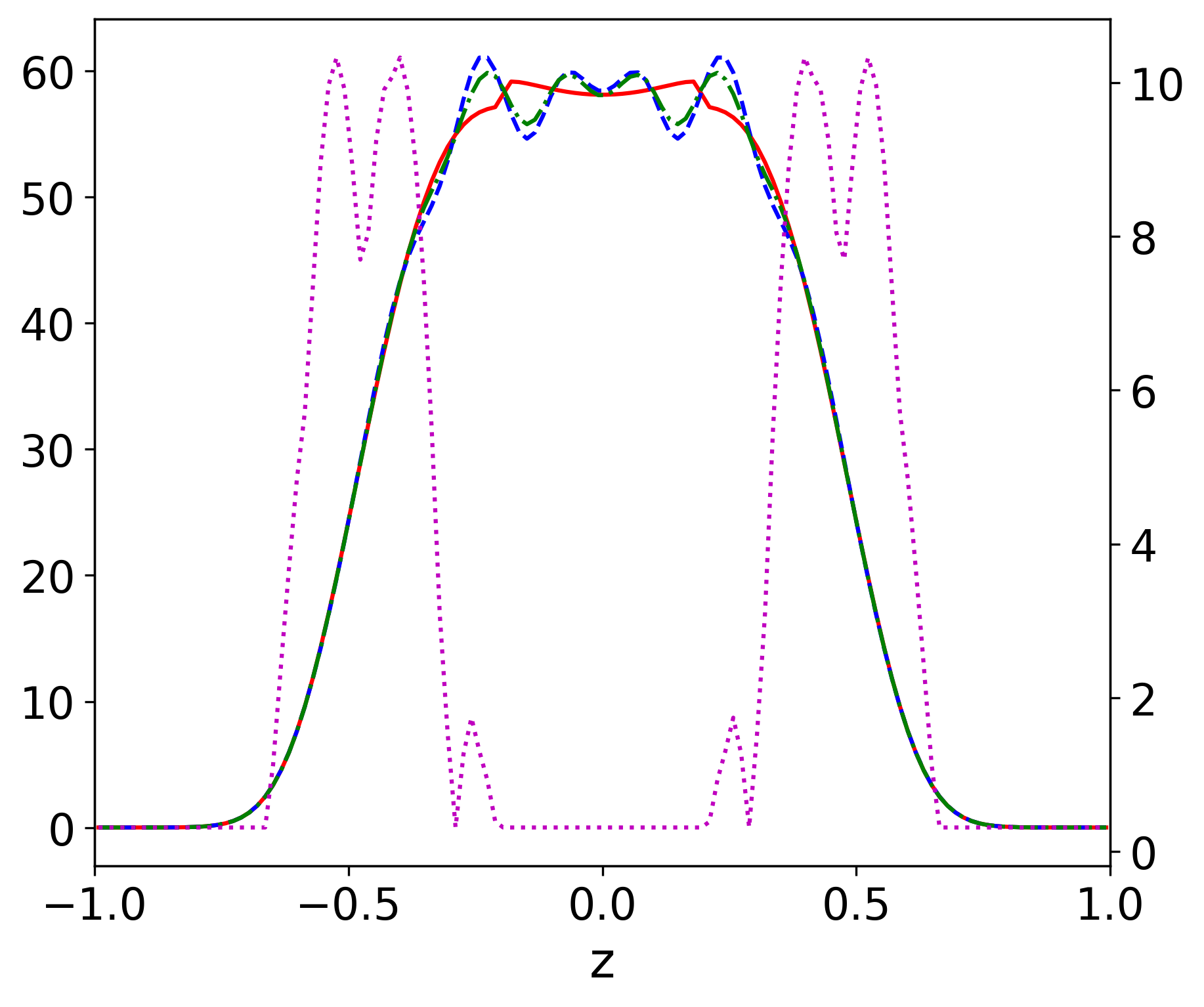}  
             & \includegraphics[width=0.3\textwidth]{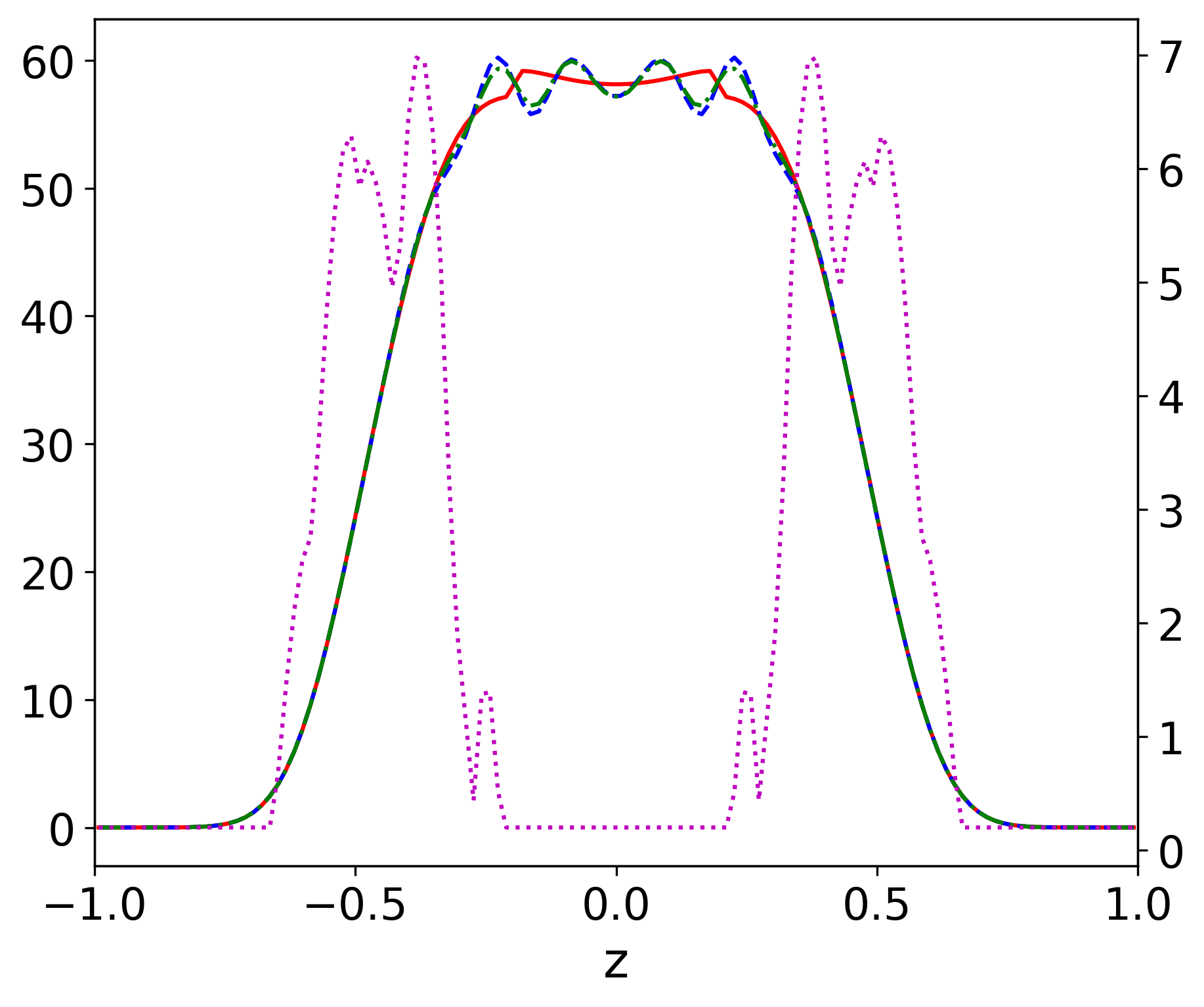} \\
        \includegraphics[width=0.3\textwidth]{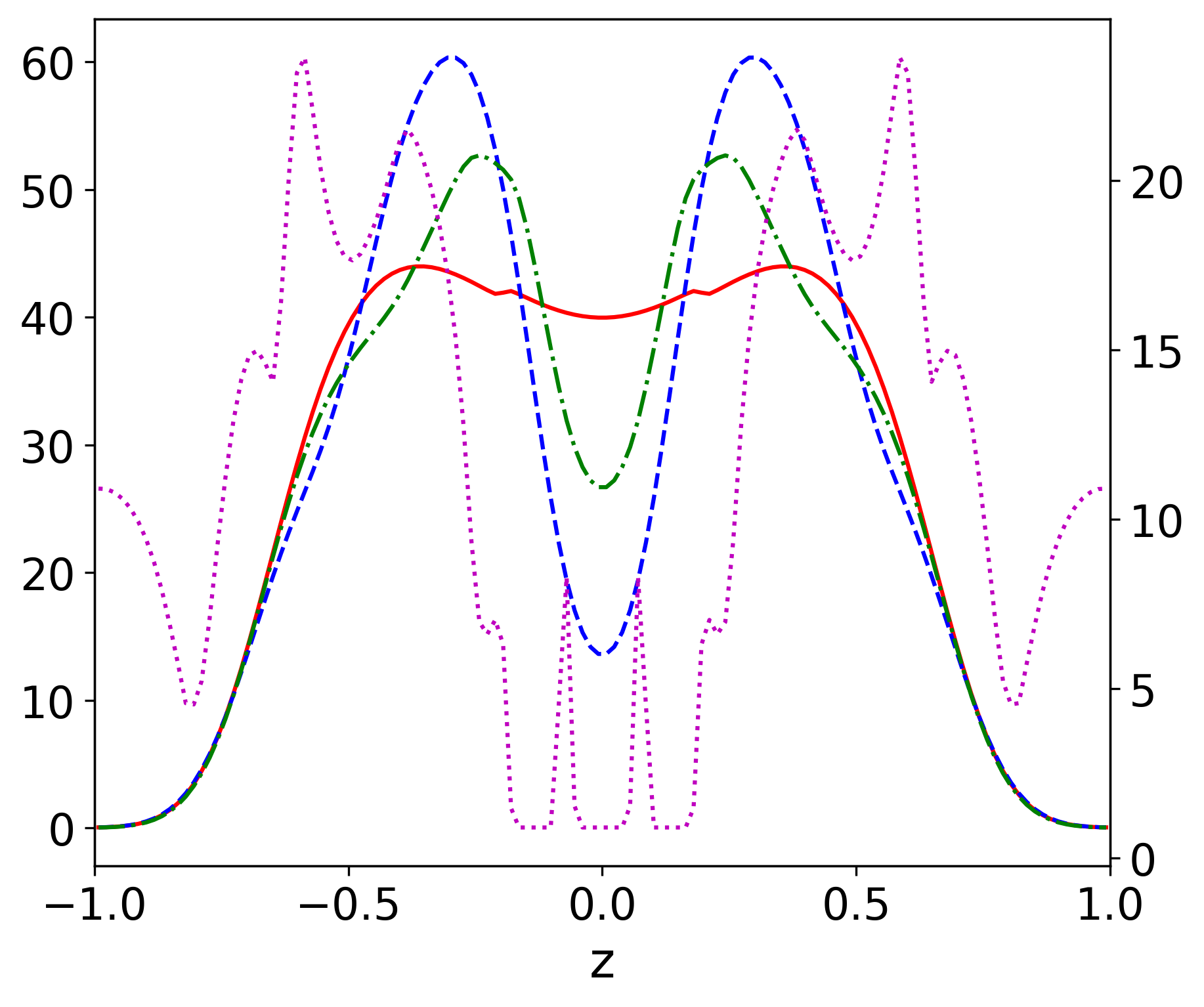} 
             & \includegraphics[width=0.3\textwidth]{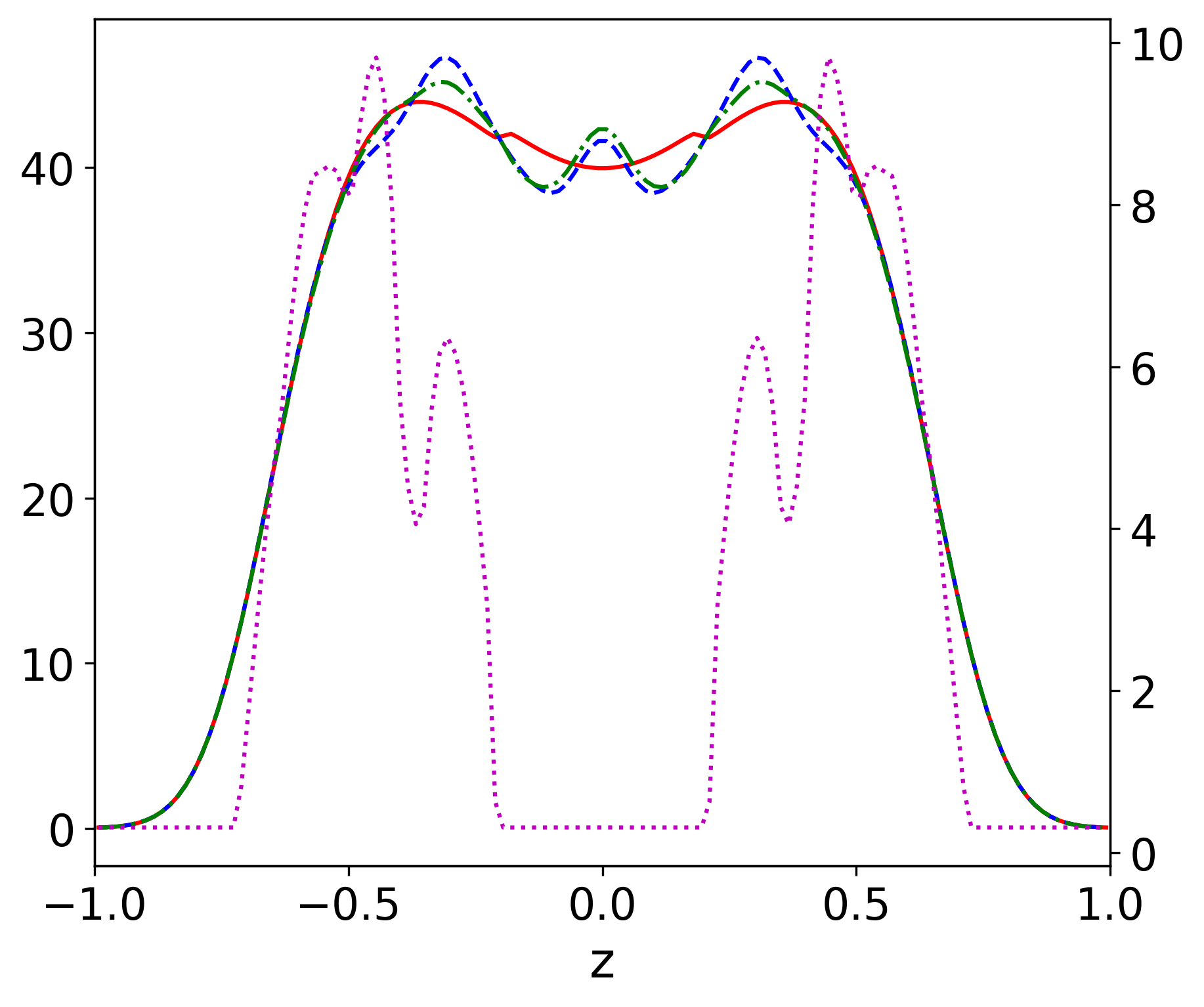}
             & \includegraphics[width=0.3\textwidth]{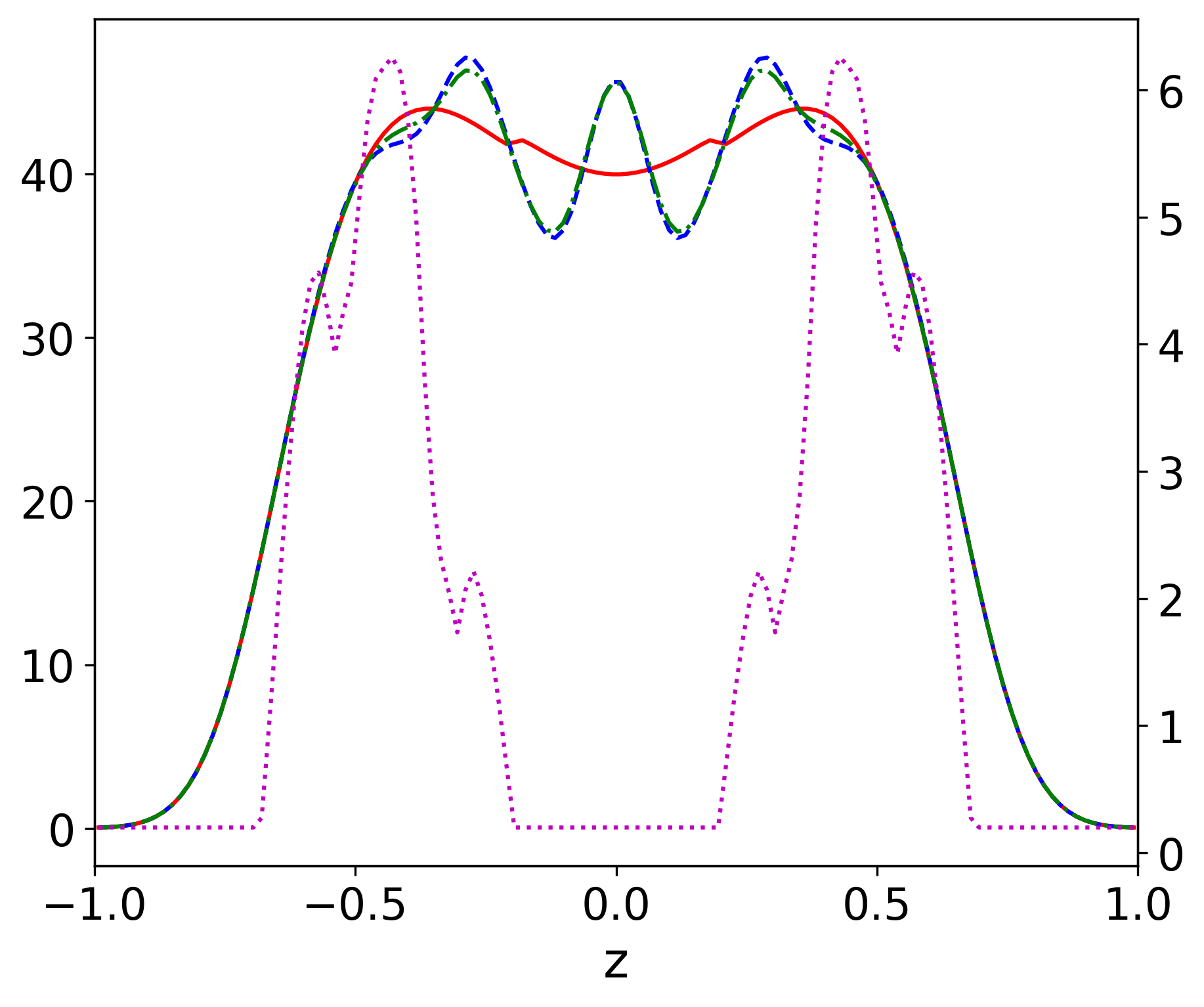}\\ 
            \multicolumn{3}{c}{\includegraphics[width=0.5\textwidth]{figures/legend.png}} \\
            \end{tabular}
                    \caption{Particle concentrations at $\tf=0.5$ (top row) and $\tf=1.0$ (bottom row) for $N=3,7,9$.        
                    The scale for the particle concentration is shown on the left and the scale for $\sig{f}$ is shown on the right in each plot.}
           \end{subfigure}
    \vspace{1em}
    \centering
    \begin{subfigure}[b]{\textwidth}
        \centering
        
\resizebox{\textwidth}{!}{
        \begin{tabular}{@{}c ccc c ccc@{}}
        \toprule
        & \multicolumn{3}{c}{Neural Network ($r_N^{\text{nn}}$)}   & \phantom{abc} & \multicolumn{3}{c} {Constant ($r_N^{\text{const}}$)}\\
        \cmidrule{2-4} \cmidrule{6-8}
        $\tf$ & $N=3$ & $N=7$ & $N=9$ & & $N=3$ & $N=7$ & $N=9$ \\
        \midrule
        0.5 & $\bm{0.437 \pm 0.001}$ & $0.678 \pm 0.000$ & $0.772 \pm 0.001$ & & $0.646 \pm 0.002$ & $\bm{0.649 \pm 0.002}$ & $\bm{0.728 \pm 0.000}$ \\
        1.0 & $0.497 \pm 0.046$ & $0.748 \pm 0.000$ & $0.908 \pm 0.000$ & & $\bm{0.323 \pm 0.003}$ & $\bm{0.446 \pm 0.002}$ & $\bm{0.465 \pm 0.001}$ \\
        \bottomrule
        \end{tabular}
        }
          \caption{Ratio of the \fpn relative to the P$_N$ error for the neural network filter strength ($r_N^{\text{nn}}$) and constant filter strength ($r_N^{\text{const}}$). Smaller numbers are better, and  bold denotes which trainable filter that does better for each choice of $N$ and $\tf$.}
    \end{subfigure}
        \caption{(Vanishing Cross-Section) 
        The constant ansatz yields the smaller error in all but one case.
        }
    \label{tab:van_cs}
\end{figure}

\subsubsection{Test Case: Discontinuous Cross-Section}
The discontinuous cross-section problem contains sharp discontinuities in $\sig{s}$ and the initial data. 
This scenario is common in physical models with multiple materials. 
The problem domain is [-1,1].  The initial data and scattering cross-sections are \cite{Hauck2008}

\begin{subequations}
\begin{align}
v_0(z) &=  
\begin{cases}
1, & z \in (-0.2,0.2),\\
0, & z \in [-1,-0.2] \cup [0.2,1.0]
\end{cases}\\
\sig{s}(z) &=  
\begin{cases}
0.2, & z \in [-0.65,-0.35] \cup [0.35,0.65],\\
1, & z \in [-1,-0.65) \cup (-0.35,0.35) \cup (0.65,1].
\end{cases}
\end{align}
\end{subequations}

We compare the data-driven \fpn simulation for $N \in \{3,7,9\}$ with the P$_{127}$ reference solution.
We observe in Figure \ref{tab:disc} that the low-order \pn simulation yields a final time solution with oscillatory behavior.
The neural network-based filter $\Sig{f}$ suppresses these oscillations, improving accuracy.
While the constant filter reduces the \pn error significantly over all cases, the neural network-based filter solution achieves greater improvements in all but two cases.

\begin{figure}[H]
    \centering

    \begin{subfigure}[b]{0.9\textwidth}
        \centering
        \begin{tabular}{ccc}
            $N=3$ & $N=7$ & $N=9$ \\
              \includegraphics[width=0.3\textwidth]{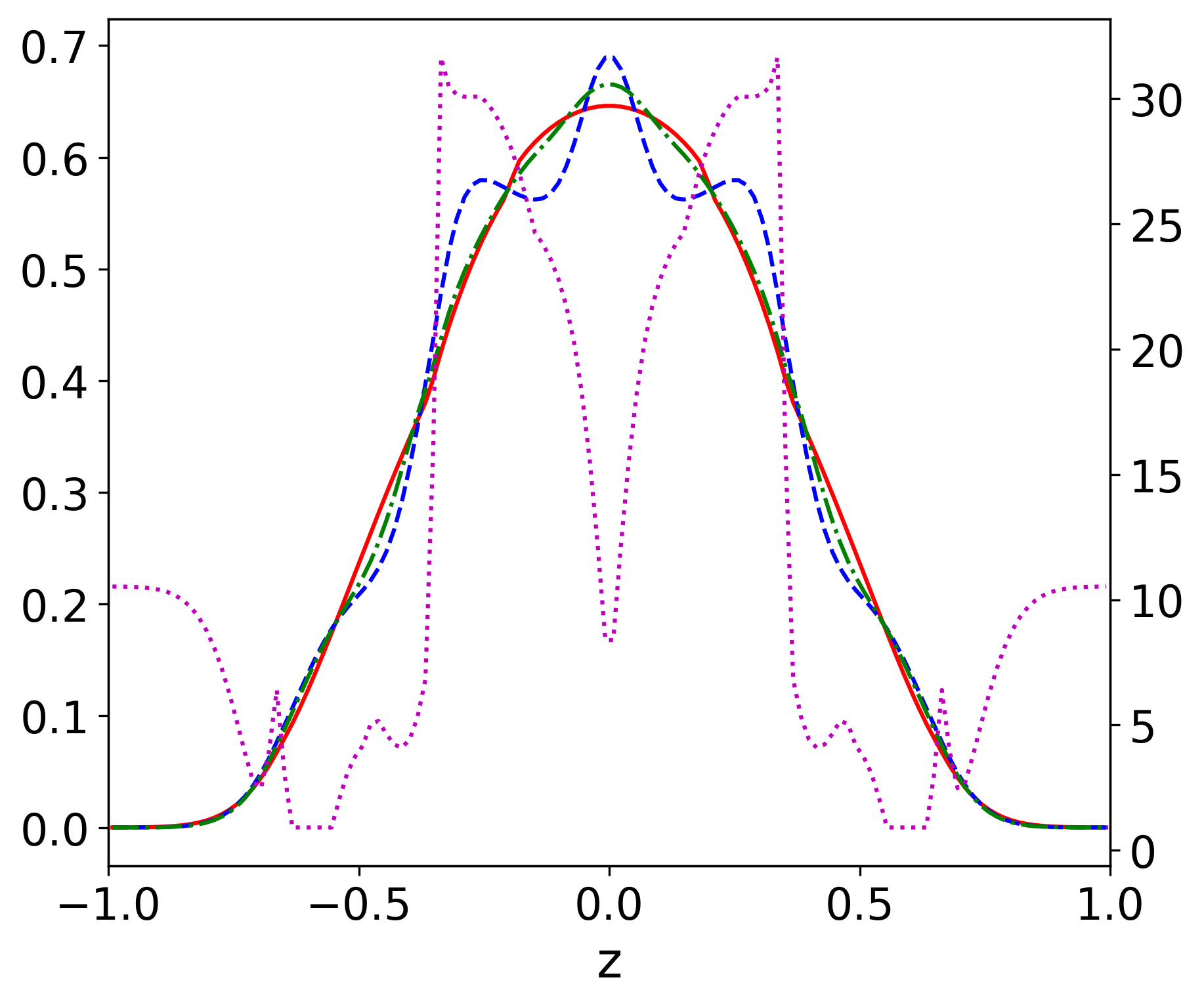}
            &   \includegraphics[width=0.3\textwidth]{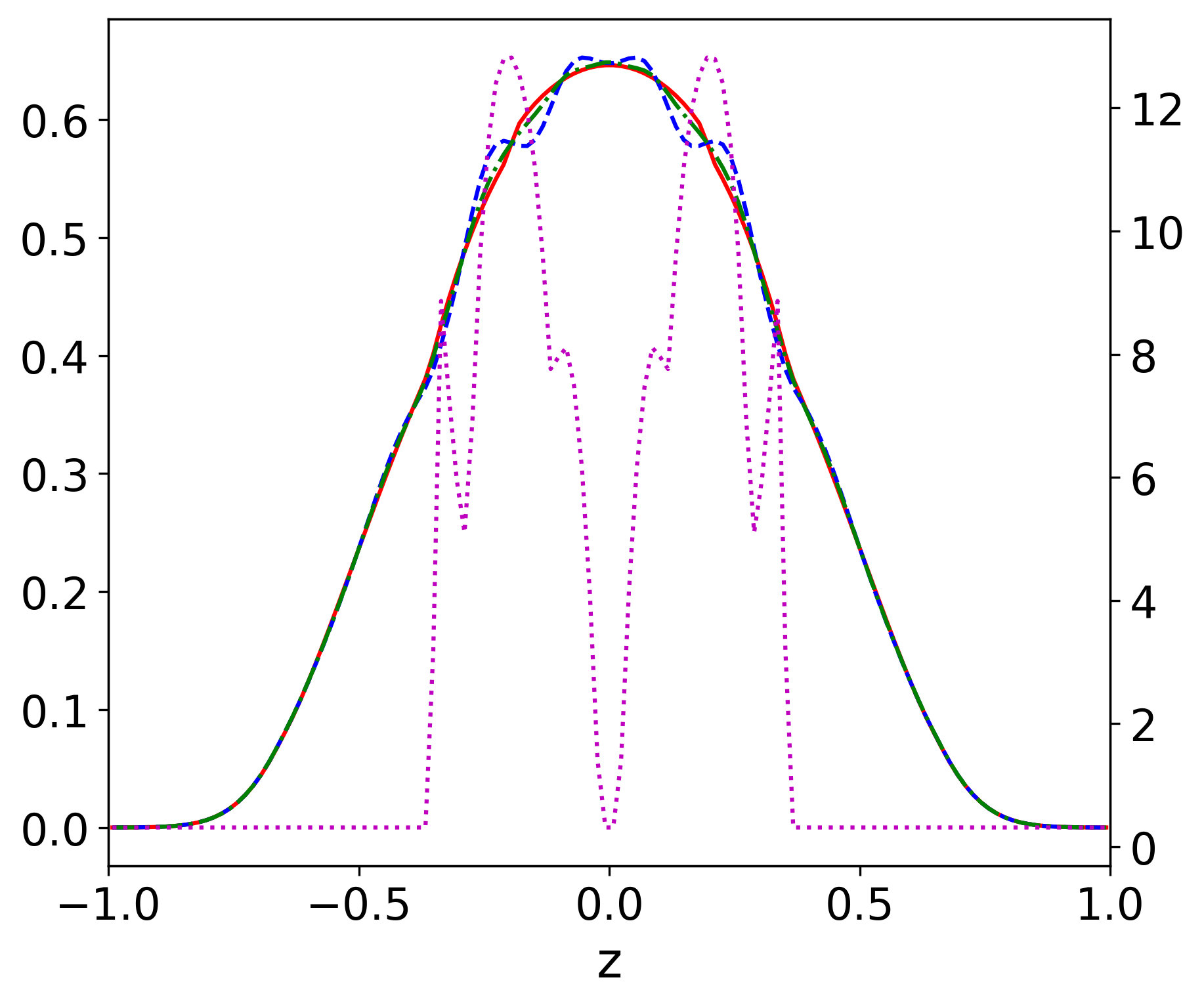}
            & \includegraphics[width=0.3\textwidth]{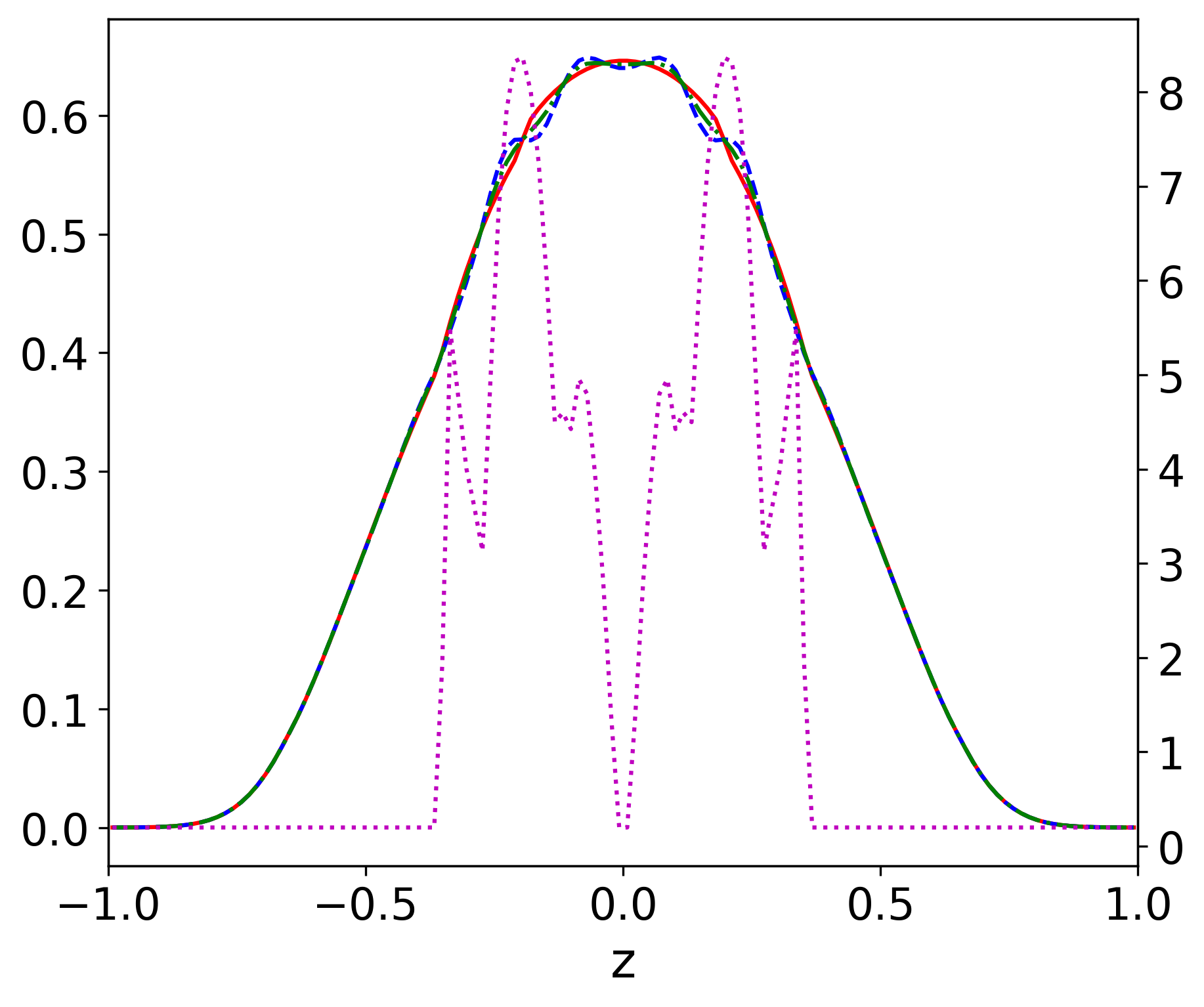} \\
        \includegraphics[width=0.3\textwidth]{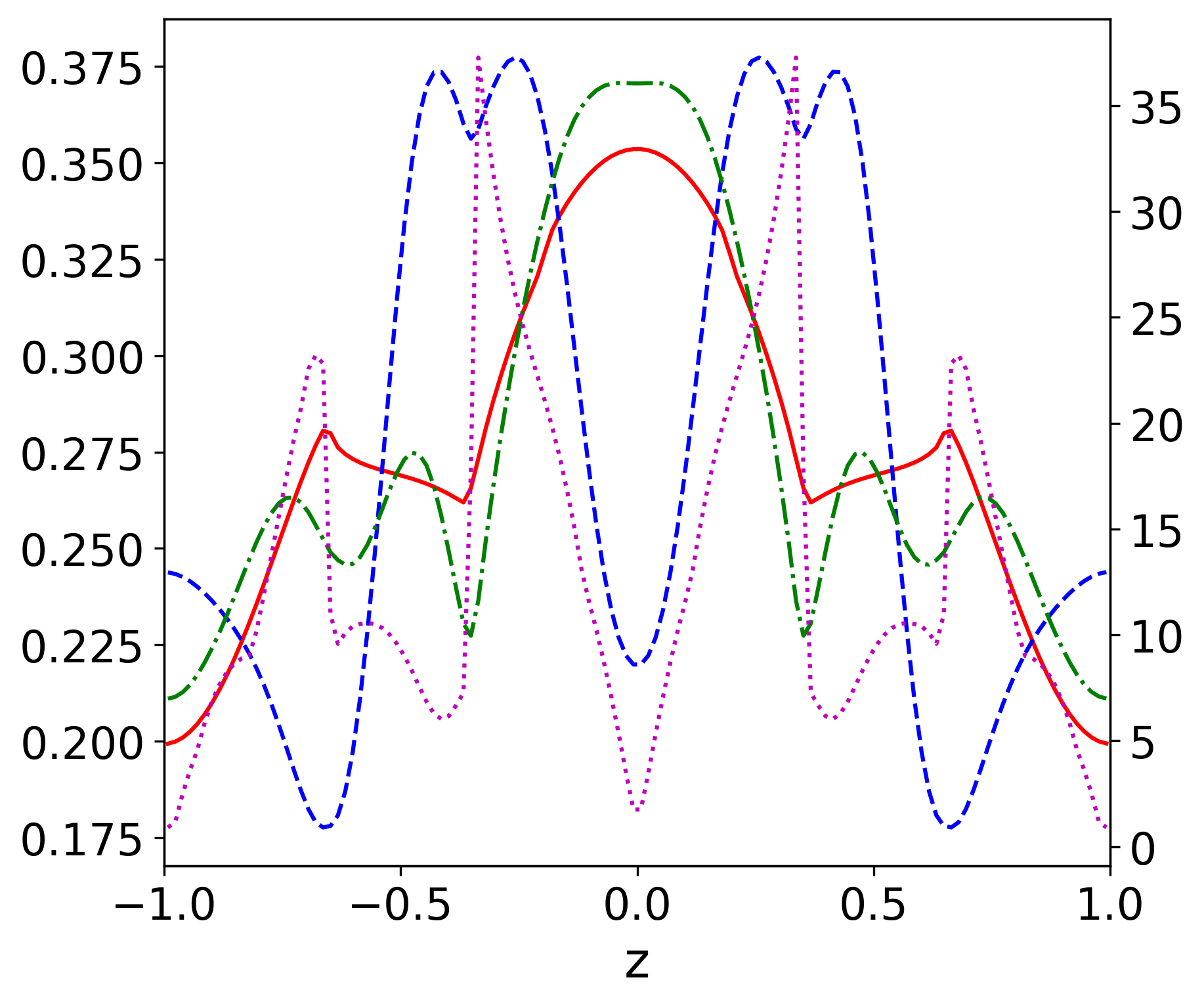}
            & \includegraphics[width=0.3\textwidth] {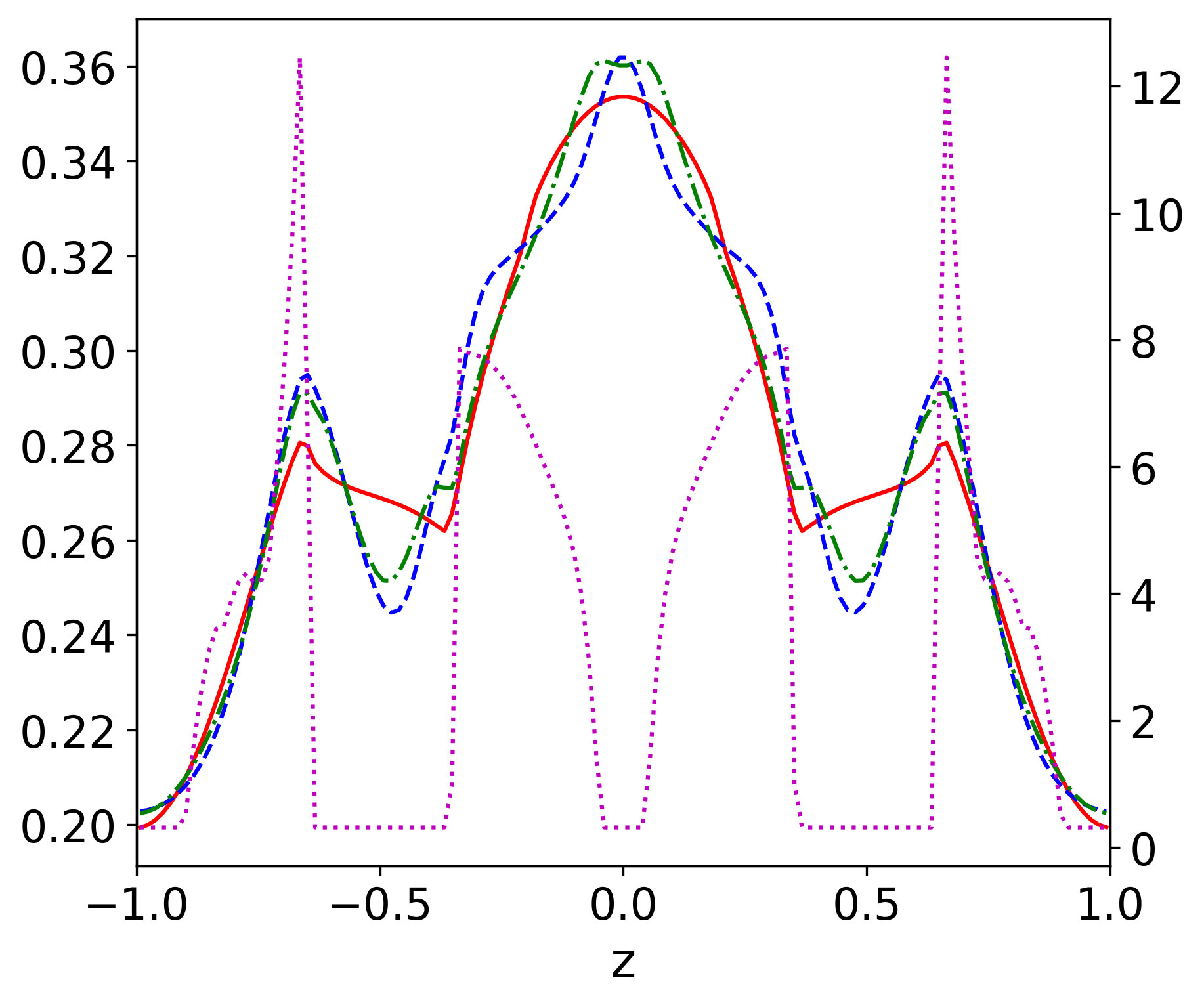}
            & \includegraphics[width=0.3\textwidth] {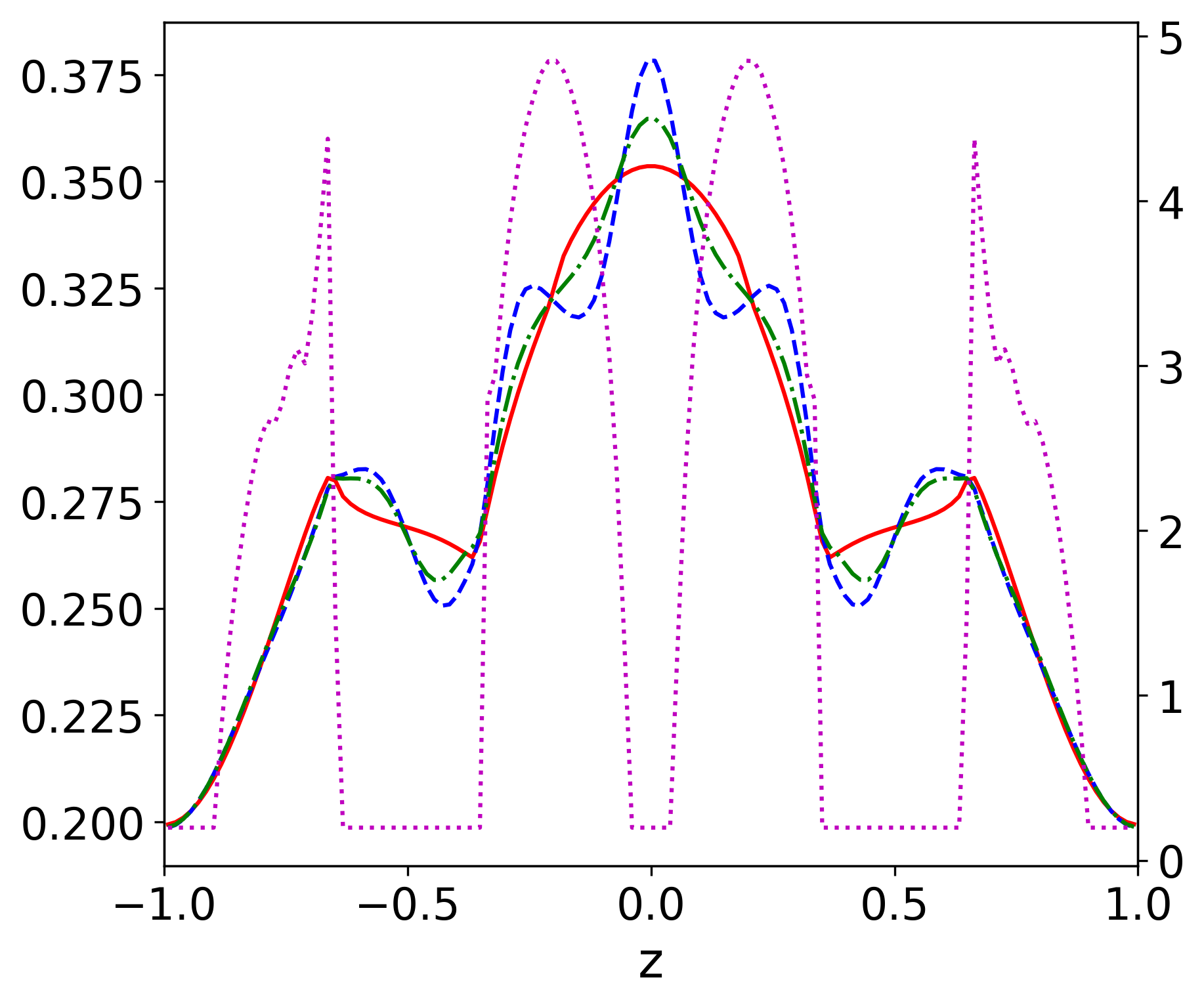} \\
            \multicolumn{3}{c}{\includegraphics[width=0.5\textwidth]{figures/legend.png}} \\
        \end{tabular}
        \caption{Particle concentrations at $\tf=0.5$ (top row) and $\tf=1.0$ (bottom row) for $N=3,7,9$. 
        The scale for the particle concentration $\phi$ is shown on the left and the scale for $\sig{f}$ is shown on the right in each plot. }

    \end{subfigure}

    \vspace{1em}

    \begin{subfigure}[b]{\textwidth}
        \centering
        
\resizebox{\textwidth}{!}{
       \begin{tabular}{@{}c ccc c ccc@{}}
        \toprule
        & \multicolumn{3}{c}{Neural Network ($r_N^{\text{nn}}$)} & \phantom{abc} & \multicolumn{3}{c}{Constant ($r_N^{\text{const}}$)} \\
        \cmidrule{2-4} \cmidrule{6-8}
        $\tf$ & $N=3$ & $N=7$ & $N=9$ & & $N=3$ & $N=7$ & $N=9$ \\
        \midrule
        0.5 & $\bm{0.309 \pm 0.012}$ & $\bm{0.325 \pm 0.001}$ & $\bm{0.492 \pm 0.002}$ & & $0.624 \pm 0.001$ & $0.649 \pm 0.002$ & $0.729 \pm 0.000$ \\
        1.0 & $\bm{0.246 \pm 0.021}$ & $0.633 \pm 0.000$ & $0.487 \pm 0.001$ & & $0.312 \pm 0.003$ & $\bm{0.346 \pm 0.002}$ & $\bm{0.447 \pm 0.001}$ \\
        \bottomrule
        \end{tabular}}
       \caption{Ratio of the \fpn relative to the P$_N$ error for the neural network filter strength ($r_N^{\text{nn}}$) and constant filter strength ($r_N^{\text{const}}$). Smaller numbers are better, and  bold denotes which trainable filter that does better for each choice of $N$ and $\tf$.}
          \end{subfigure}
    \caption{(Discontinuous Cross-Section) 
    The neural network ansatz yields the smallest error in all but one case. }
    \label{tab:disc}
\end{figure}

\subsubsection{Test Case: Reed's Problem}

Reed's problem involves transport through a domain with multiple materials characterized by different cross-sections and sources.
The problem layout is described in Table \ref{tar_reed}.
The left boundary condition is reflecting, while the right boundary is vacuum.

\begin{table}[H]
    \centering
\begin{tabular}{@{}lccccc@{}}
\toprule
& Region 1 & Region 2 & Region 3 & Region 4 & Region 5 \\
\midrule
$x$           & $[0,2)$   & $[2,3)$   & $[3,5)$   & $[5,6)$   & $[6,8]$ \\
$S$           & 50        & 0         & 0         & 1         & 0       \\
$\sig{s}$    & 0         & 0         & 0         & 0.9       & 0.9     \\
$\sig{t}$    & 50        & 5         & 0         & 1         & 1       \\
\bottomrule
\end{tabular}
    \caption{Spatial configuration for Reed's problem. }
    \label{tar_reed}
\end{table}

Solutions at $\tf= 5$ and $\tf=10$ are presented in Figure \ref{tab:Reed's}.
For $N =3$, the neural network filter yields smaller errors than the constant filter, whereas for $N=7$ and $N=9$, the constant filter solutions achieve the smallest errors. As in the vanishing cross-section test, the largest errors occur in the streaming region (Region 3) where the $\sig{f}$ is small.

\begin{figure}[t]
    \centering
    \begin{subfigure}[b]{0.9\textwidth}
        \centering
        \begin{tabular}{ccc}
            $N=3$ & $N=7$ & $N=9$ \\
             \includegraphics[width=0.3\textwidth]{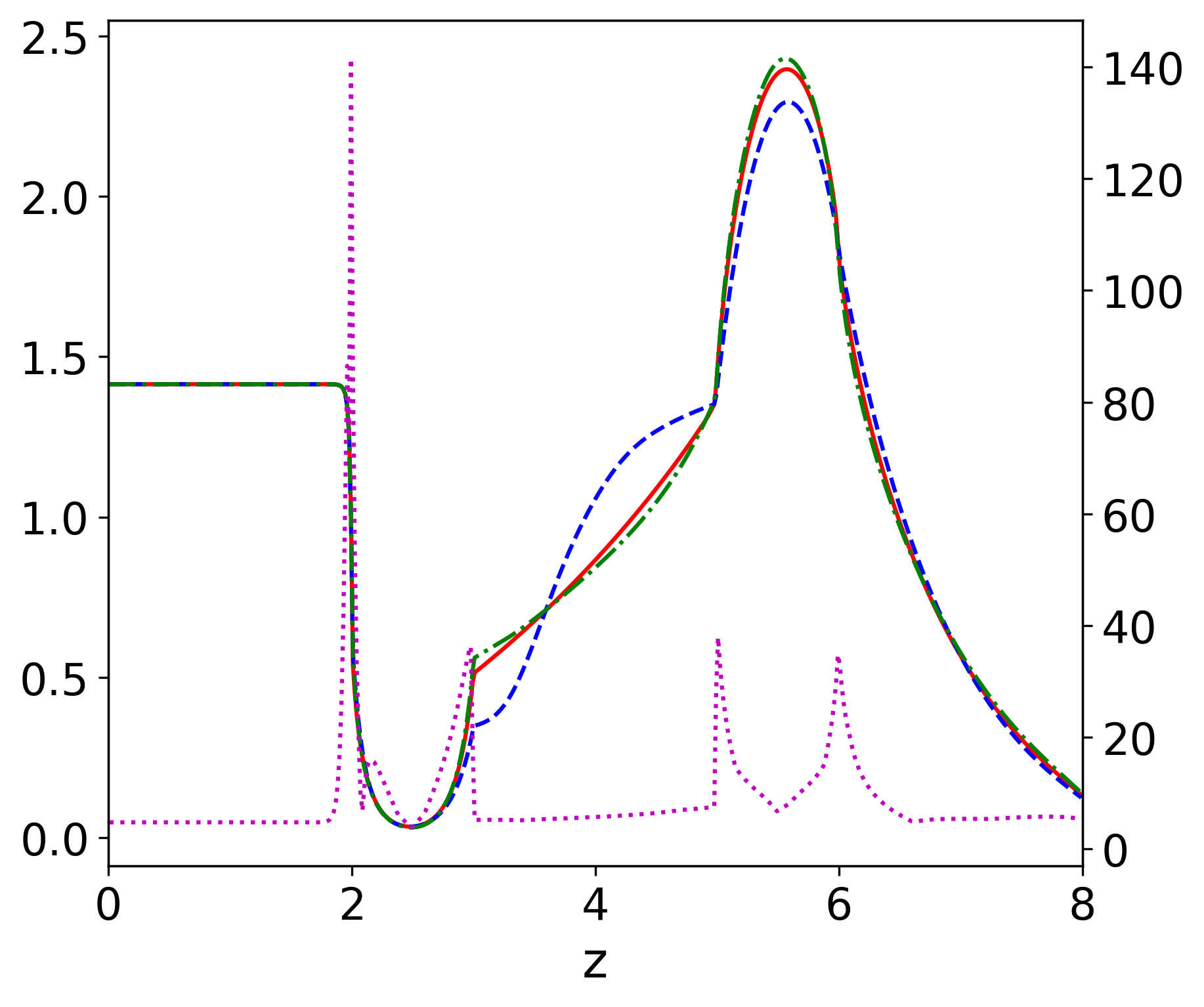}
            & \includegraphics[width=0.3\textwidth]{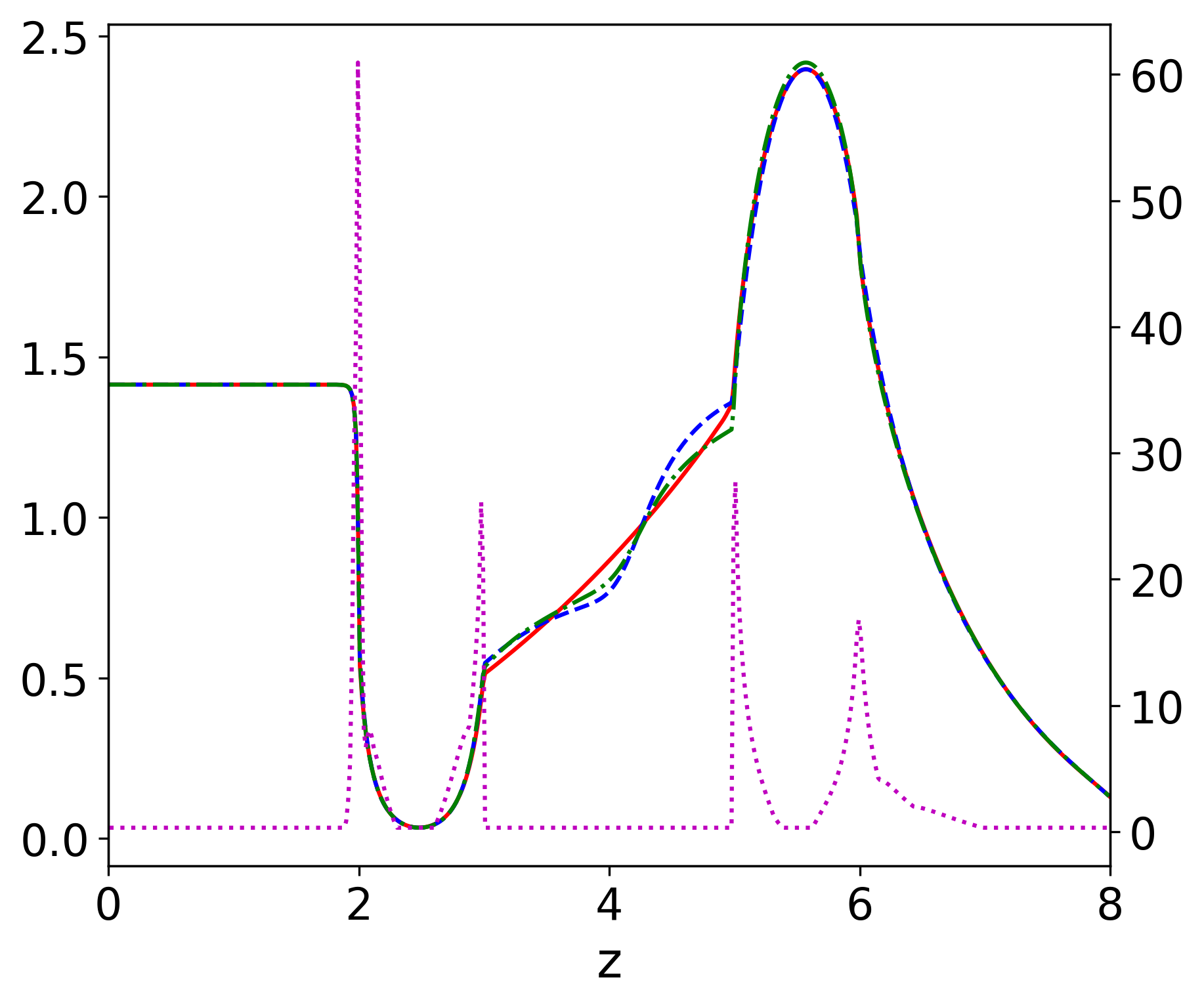}
            & \includegraphics[width=0.3\textwidth]{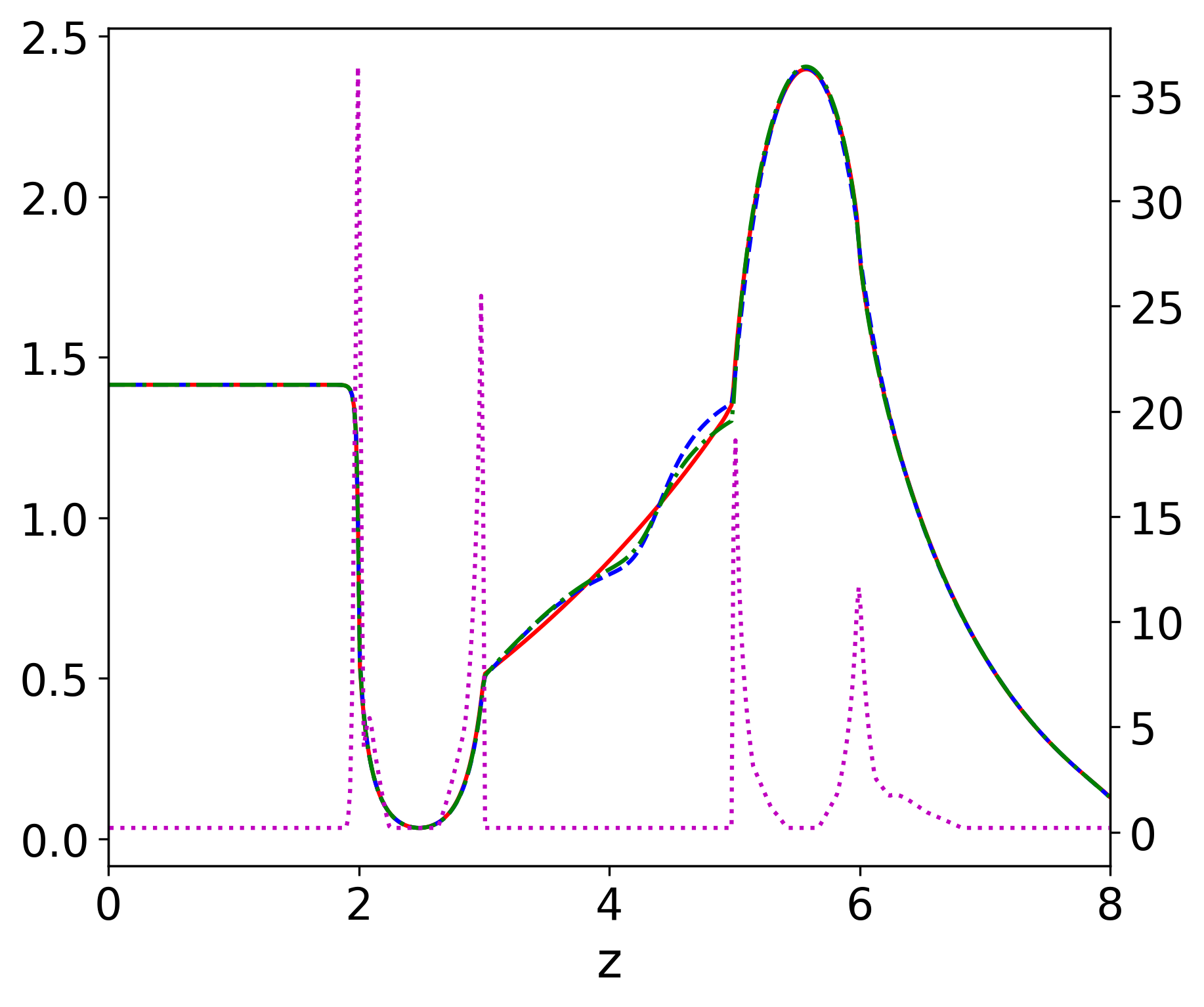}  \\
             \includegraphics[width=0.3\textwidth]{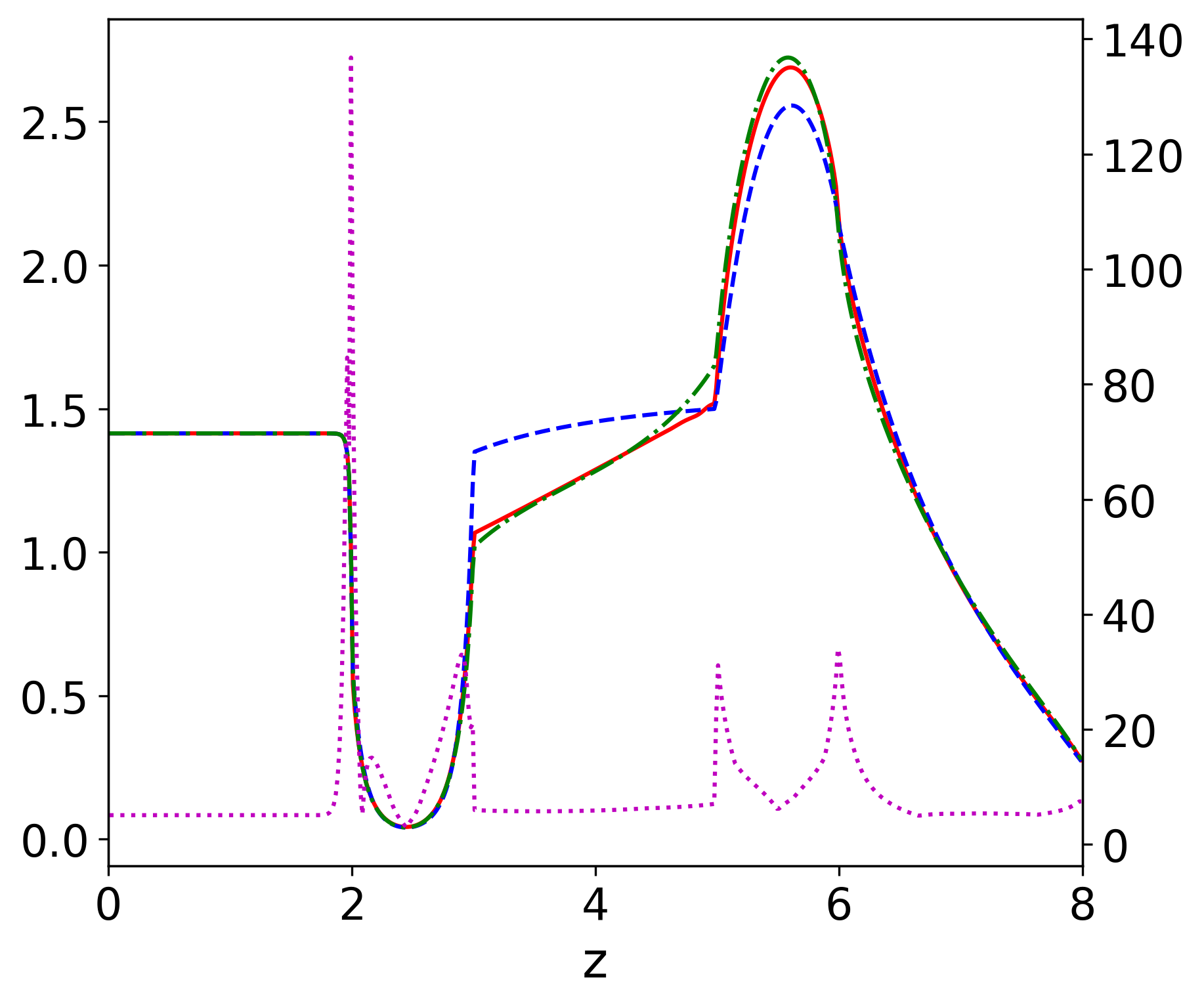}
            & \includegraphics[width=0.3\textwidth] {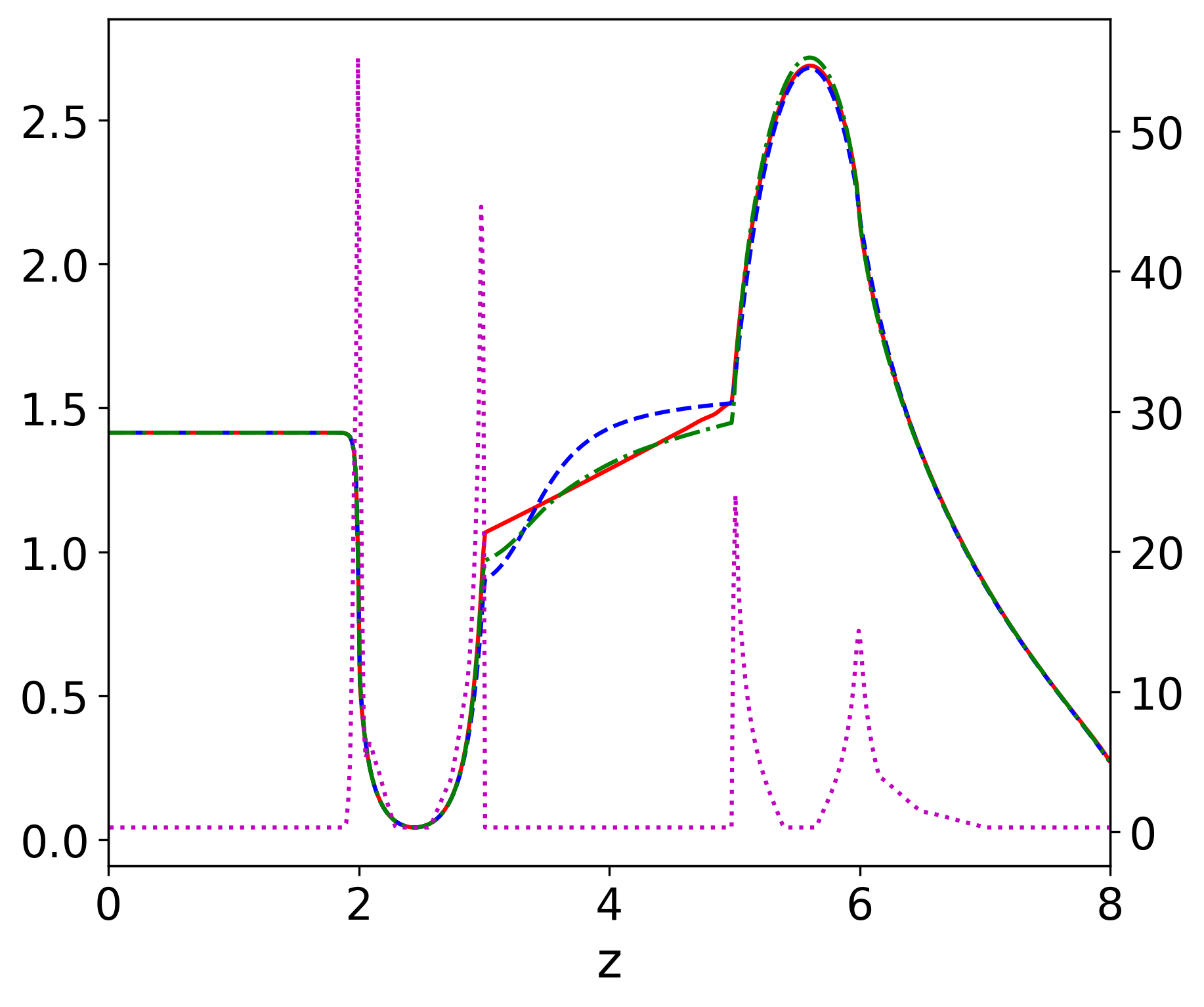}
            & \includegraphics[width=0.3\textwidth] {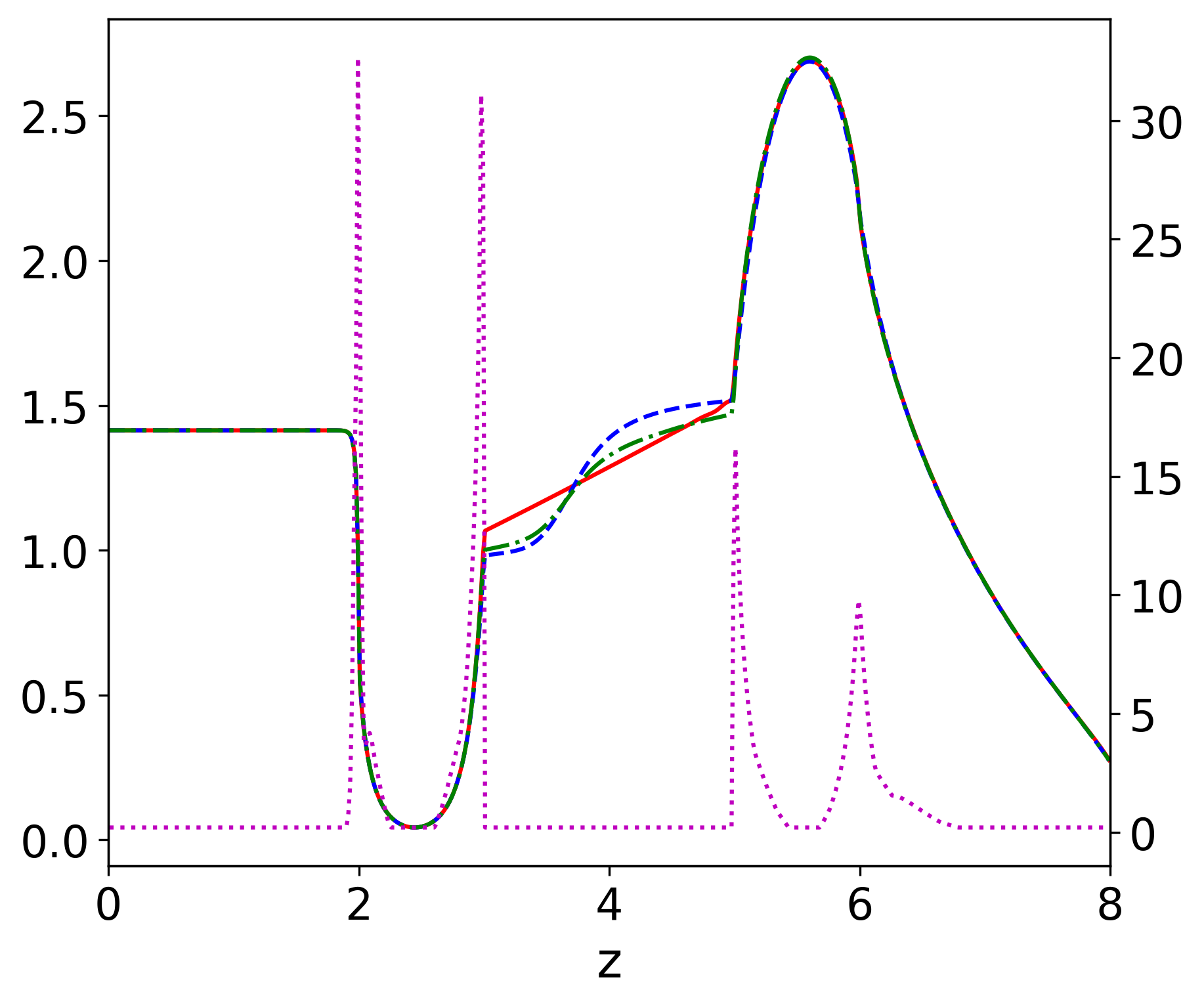} \\
            \multicolumn{3}{c}{\includegraphics[width=0.5\textwidth]{figures/legend.png}} \\
        \end{tabular}
        \caption{Particle concentrations at $\tf= 5$ (top row) and $\tf =10$ (bottom row) for $N=3,7,9$.
        The scale for the particle concentration is shown on the left and the scale for $\sig{f}$ is shown on the right in each plot.}
    \end{subfigure}

    \vspace{1em}

    \begin{subfigure}[b]{\textwidth}
        \centering
        
\resizebox{\textwidth}{!}{
        \begin{tabular}{@{}c ccc c ccc@{}}
        \toprule
        & \multicolumn{3}{c}{Neural Network ($r_N^{\text{nn}}$)} & \phantom{abc} & \multicolumn{3}{c}{Constant ($r_N^{\text{const}}$)} \\
        \cmidrule{2-4} \cmidrule{6-8}
        $\tf$ & $N=3$ & $N=7$ & $N=9$ & & $N=3$ & $N=7$ & $N=9$ \\
        \midrule
        5 & $\bm{0.224 \pm 0.001}$ & $0.593 \pm 0.000$ & $0.625 \pm 0.000$ & & $0.273 \pm 0.000$ & $\bm{0.282 \pm 0.000}$ & $\bm{0.281 \pm 0.000}$ \\
        10 & $\bm{0.289 \pm 0.030}$ & $0.438 \pm 0.000$ & $0.602 \pm 0.000$ & & $0.555 \pm 0.000$ & $\bm{0.423 \pm 0.001}$ & $\bm{0.316 \pm 0.000}$ \\
        \bottomrule
        \end{tabular}
        }
        \caption{Ratio of the \fpn relative to the P$_N$ error for the neural network filter strength ($r_N^{\text{nn}}$) and constant filter strength ($r_N^{\text{const}}$). Smaller numbers are better, and  bold denotes which trainable filter that does better for each choice of $N$ and $\tf$.}
          \end{subfigure}
     \caption{(Reed's Problem)   
   The neural network ansatz yields the smallest errors for $N=3$, while the constant filter strength yields the smallest errors for $N = 7$ and $N = 9$.}
        \label{tab:Reed's}
\end{figure}

\subsubsection{Ablation Study}

We assess the sensitivity of the neural network ansatz through {single-feature} and {leave-one-out} ablation tests on Reed’s problem with $N=3$ and $N =7$ at $\tf=5$.  In the single-feature tests, each feature is tested in isolation by setting all others to zero. 
In the leave-one-out test, one feature is removed at a time while keeping the rest. The error ratios $r_N^{\text{nn}}$ for the particle concentration are reported in Table~\ref{tab:ablation}.  The data is presented as a mean and standard deviation over ten runs.

For $N = 3$, feature $\sig{t} \bfv$ gives the smallest error ratio in the single-feature tests and when it is omitted in the leave-one-out tests, the error increases significantly, indicating the importance of this feature in the model. 
For $N = 7$, feature $\tilde{\bfA} \partial_x \bfv$ gives the smallest error ratio in the single-feature tests, while omitting this feature increases the error ratio significantly in the leave-one-out tests.

\begin{table}[H]
\centering
\resizebox{0.9\textwidth}{!}{
\begin{tabular}{l l c c c c c c c}
\toprule
$N$ & Feature & $\tilde{\bfA} \partial_x \bfv$ & $\sig{t} \bfv$ & $\sig{s} (\tilde{\bfG} - I) \bfv$ & $\tilde{\bfs}$ & None & All & Interpretation \\
\midrule
\multirow{2}{*}{3} 
 & Single-feature   & $0.374 \pm 0.016$ & $0.193 \pm 0.005$ & $0.267 \pm 0.007$ & $0.270 \pm 0.001$ & $0.272 \pm 0.001$ & $0.224 \pm 0.001$ & Only this feature included \\
 & Leave-one-out    & $0.162 \pm 0.013$ & $0.290 \pm 0.001$ & $0.302 \pm 0.014$ & $0.225 \pm 0.011$ & $0.224 \pm 0.001$ & -- & This feature omitted \\
\midrule
\multirow{2}{*}{7} 
 & Single-feature   & $0.184 \pm 0.000$ & $0.419 \pm 0.000$ & $0.323 \pm 0.000$ & $0.339 \pm 0.000$ & $0.334 \pm 0.000$ & $0.438 \pm 0.000$ & Only this feature included \\
 & Leave-one-out    & $0.417 \pm 0.000$ & $0.150 \pm 0.000$ & $0.438 \pm 0.000$ & $0.438 \pm 0.000$ & $0.438 \pm 0.000$ & -- & This feature omitted \\
\bottomrule
\end{tabular}
}
\caption{
Errors from single-feature and leave-one-out tests for Reed's problem with $N = 3$ and $N = 7$ at $\tf = 5$ are presented.
For the single feature tests, the  ``None'' column shows the error with no features (bias only), while ``All'' includes all features. 
Feature $\sig{t} \bfv$ yields the smallest relative error for $N = 3$ and feature $\tilde{\bfA} \partial_x \bfv$ yields the smallest relative error for $N = 7$. 
In the leave-one-out tests, each column indicates the omitted feature, with ``None'' referring to the full model.
Omitting feature $\tilde{\bfA} \partial_x \bfv$ results in the smallest error for $N = 3$ and omitting feature $\sig{t} \bfv$ results in the smallest error for $N = 7$.
}
\label{tab:ablation}
\end{table}

\subsection{2-D Tests} \label{sec:numerical_2D}

In geometries with no variation in the $z$ direction, the \pn and \fpn equations can be reduced to equations in two spatial variables $(x,y) \in D \subset \bbR^2$.  In this setting, the kinetic distribution $\psi = \psi(x,y,\bm{\Omega})$ is approximated by the expansion $\psifpn(x,y,\bm{\Omega}) = \tilde{\bfr}^\top (\bm{\Omega}) \bfv(x,y,t)$, where $\tilde{\bfr}: \bbS^2 \to \bbR^{(N+1)(N+2)/2}$ is a vector-valued function containing components of $\bfr$ with respect to spherical harmonic components that are even with respect to $\mu$ and $\bfv$ satisfies the 2D \fpn equations
\begin{equation}
    \p_t \bfv 
    + \tilde{\bfA}^{(1)} \p_x \bfv 
    +  \tilde{\bfA}^{(2)} \p_y \bfv  
    + \sig{a}\bfv 
    + \sig{s} \tilde{\bfG} \bfv 
    + \sig{f} \tilde{\bfF} \bfv 
    = \tilde{\bfs},
\end{equation}
with matrices $\tilde{\bfA}^{(1)}$, $\tilde{\bfA}^{(2)}$, $\tilde{\bfG}$, and $\tilde{\bfF}$ that are described in Appendix \ref{subsec:reduction-2D}.  Because the kinetic distribution $\psi$ in this case is an even function of $\mu$, moments with respect to spherical harmonics that are odd with respect to $\mu$ are identically zero and not considered. The physical cross-sections and source $\tilde{\bfs}$ are all function of $x$ and $y$, while the filter strength $\sig{f}$ depends on $x$, $y$, and $t$ through the state $\bfv$ and source $\tilde{\bfs}$.

Below we describe the data-sampling strategy, architectural details  and numerical evaluation of the data driven filter in several 2-D test cases. We test the data-driven \fpn solver for $N \in \{3,5,7,9\}$.  In both training and testing scenarios, the initial condition and source are isotropic; that is $v_\ell^m = \tilde{s}_\ell^m = 0$ for all $\ell >0$; thus we need only specify $v_0^0|_{t=0}$ and $\tilde{s}^0_0$. Moreover all components of $v$ are zero on the boundary (vacuum boundary condition).

\subsubsection{Data Sampling Strategy}\label{sec:data2D}
All simulation setups to generate training data take the domain $D=[-1,1]\times [-1,1]$.
We specify the random variable $\gamma$ that constitutes the data sampling strategy to train the data-driven filter for the 2-D test cases. 
Specifically, $\gamma$ describes the choice of initial condition for $v_0$ and sources $\tilde{s}_0$.  The initial conditions are selected from
\begin{enumerate}[label=(\alph*)]
    \item a zero centered Gaussian with standard deviation $0.1$:
    \begin{equation}\label{eq:gauss}
        v_0^0(x,y,t=0) 
        = \frac{1}{\sqrt{2 \pi \varsigma^2 }}\exp\left( -\frac{x^2 + y^2}{2 \varsigma^2} \right), \quad  \varsigma = 0.1;
    \end{equation}
    
    \item a piecewise linear function with maximum at the origin:
    \begin{equation}
        v_0^0(x,y) = \max \Big( 0, \; 1 - |x| - |y| \Big)
    \end{equation}

    \item the indicator function on the interval $[-0.2,0.2]\times [-0.2,0.2]$,
    \item the indicator function on the interval $[-0.4,0.4]\times [-0.4,0.4]$,
    \item a bump function centered at the origin:
    \begin{equation}
        v_0^0(x,y) =
        \begin{cases}
            10 \cdot \cos(2\pi r), & \text{if } \cos(2\pi r) > 0 \;\text{and}\; r < \frac{3}{8}, \\[1mm]
            0, & \text{otherwise}
        \end{cases}
        \quad
        r =  \sqrt{x^2 + y^2},
    \end{equation}

\end{enumerate}
The sources are selected from 
\begin{enumerate}[label=(\alph*)]
    \item zero
    \item the indicator function on a circle with center $z=0$ and radius $0.05$,
    \item two plates
    \begin{equation}  
    v_0^0(x,y) =
        \begin{cases}
            20, & \text{if } x \in [-0.25, 0.25] \;\text{and}\; y \in [0.7, 0.8], \\[1mm]
            40, & \text{if } x \in [-0.25, 0.25] \;\text{and}\; y \in [-0.8, -0.7], \\[1mm]
            0, & \text{otherwise,}
        \end{cases}
    \end{equation}

    \item the indicator function of a frame with width $0.1$:
    \begin{equation} 
        v_0^0(x,y) =
        \begin{cases}
            5, & \text{if } \big(|x| \in [0.6,0.8] \;\text{and}\; |y| \le 0.8\big)
            \;\text{or}\; \big(|y| \in [0.6,0.8] \;\text{and}\; |x| \le 0.8\big), \\[1mm]
            0, & \text{otherwise,}
        \end{cases}
    \end{equation}

    \item a zero centered Gaussian \eqref{eq:gauss} with standard deviation $0.03$.
\end{enumerate}
Several of the initial conditions and sources are shown in Figure \ref{fig:IC2D}.
All setups are simulated until final time $\tf=0.5$.

\begin{figure}[H]
    \centering
    \begin{subfigure}{\subfigwidth}
        \centering
        \includegraphics[width=\linewidth]{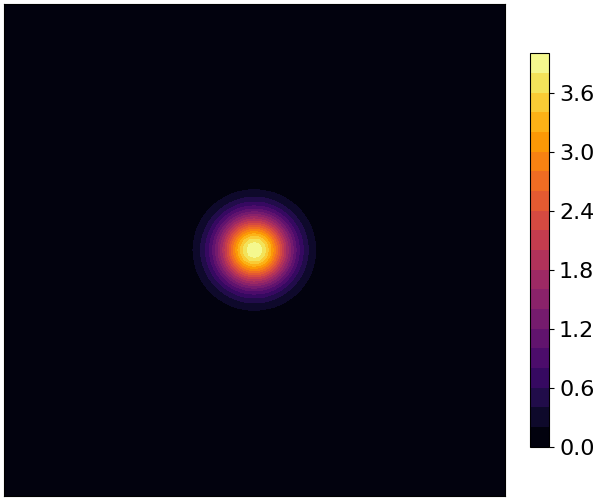}
        \caption{Gaussian}
        \label{fig:2D_gauss}
    \end{subfigure}
    \begin{subfigure}{\subfigwidth}
        \centering
        \includegraphics[width=\linewidth]{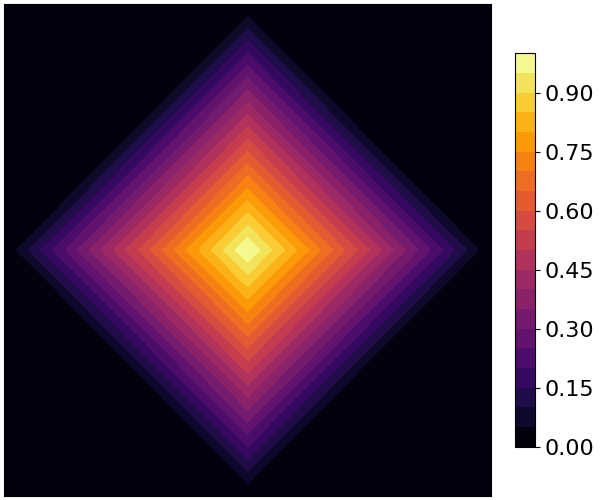}
        \caption{Piece-wise linear}
        \label{fig:2D_hat}
    \end{subfigure}
    \begin{subfigure}{\subfigwidth}
        \centering
        \includegraphics[width=\linewidth]{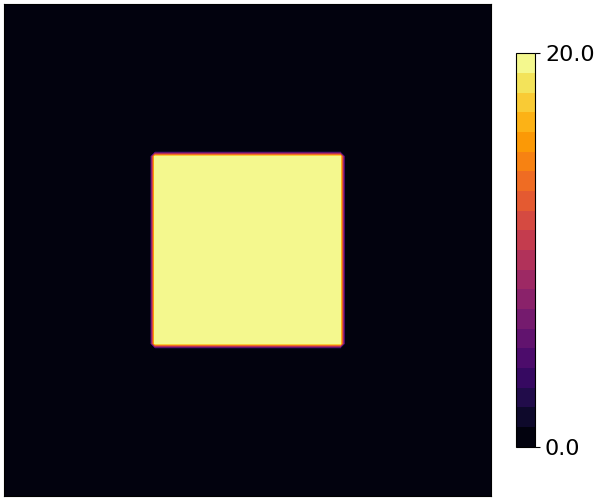}
        \caption{Piece-wise constant}
        \label{fig:2D_step}
    \end{subfigure}
    
    \begin{subfigure}{\subfigwidth}
            \centering
            \includegraphics[width=\linewidth]{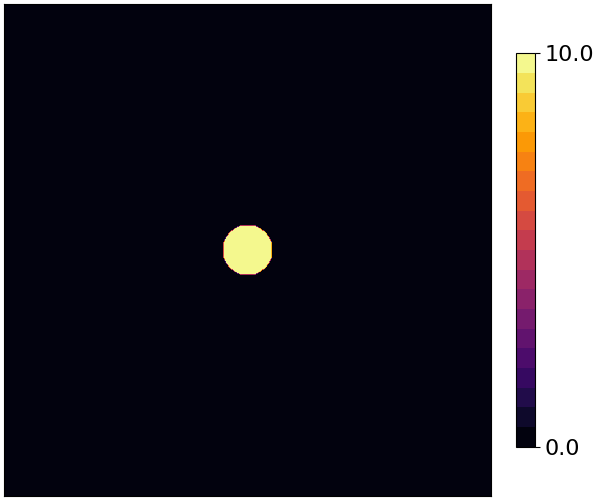}
            \caption{Pulse}
            \label{fig:2D_pulse}
    \end{subfigure}
    \begin{subfigure}{\subfigwidth}
        \centering
        \includegraphics[width=\linewidth]{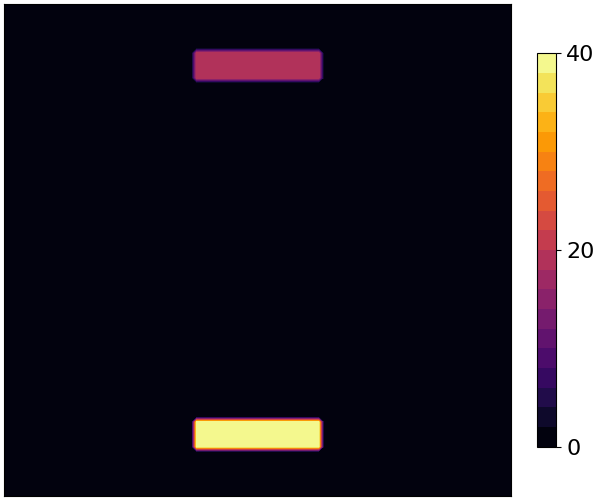}
        \caption{Two Plates}
        \label{fig:2D_plates}
    \end{subfigure}
    \begin{subfigure}{\subfigwidth}
        \centering
        \includegraphics[width=\linewidth]{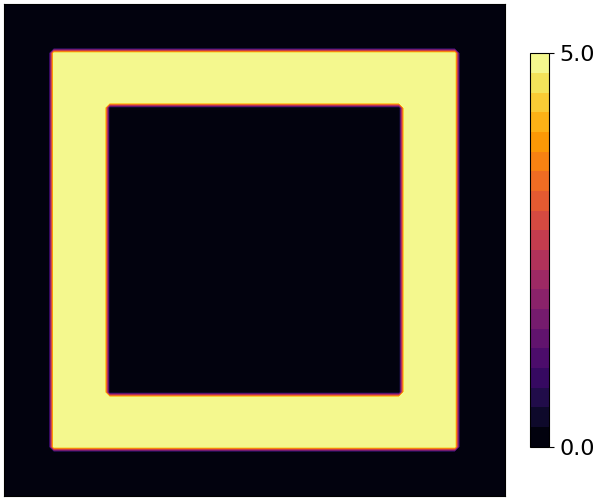}
        \caption{Frame}
        \label{fig:2D_frame}
    \end{subfigure}
   \caption{Figures \subr{fig:2D_gauss} - \subr{fig:2D_step} show initial conditions for training data.  While the Gaussian is smooth, steep gradients are challenging to resolve with the \pn method.  The irregularities in piece-wise linear and piece-wise constant functions bring their own challenges.  The other initial conditions that are not shown are a bump function and a narrow piece-wise constant function.
   Figures \subr{fig:2D_pulse} - \subr{fig:2D_frame} show isotropic sources used to train the model.  The other training sources that are not shown are a Gaussian and a zero function.}
    \label{fig:IC2D}
\end{figure} 

Furthermore, $\gamma$ describes the values for the scattering and absorption cross section, i.e.  $\sig{s}$ and $\sig{a}$ which are distributed as follows
\[
\sig{s} \sim \cU(0, 1), \quad \sig{a} \sim \cU(0, 1 - \sig{s}),
\]
where $\cU$ denotes the uniform distribution in an interval.

\subsubsection{Neural Network Architecture and Training}

We train filters $\Sig{f}$ for $N \in \{3,5,7,9\}$, where the preprocessed features are put into a feedforward neural network. The network is fully connected with $\operatorname{tanh}$ activations. Residual connections and LayerNorm (see the PyTorch documentation\cite{paszke2017automatic}) are applied after each hidden layer to stabilize training. A final ReLU-activated output layer maps to a single scalar per input sample. 
The first layer is equipped with a batch-normalization layer \cite{ioffe2015batchnormalizationacceleratingdeep} that estimates mean and variance of the features across the grid and batch. 
The weights and biases of the hidden and output layers, $\bm{W}^{(i)}$ and $\bm{b}^{(i)}$, are initialized from uniform distributions. For the first layer, the initialization depends on the number of input features $n_{\text{f}}$:  
\begin{equation}
    \bm{W}^{(1)}, \, \bm{b}^{(1)} \sim \mathcal{U}\!\left(-n_{\text{f}}^{-1/2}, \, n_{\text{f}}^{-1/2}\right).
\end{equation}
For subsequent layers $i \in \{2, \dots, 5\}$, the initialization is based on the number of hidden neurons $n_{\text{h}_i}$ in layer $i$:  
\begin{equation}
    \bm{W}^{(i)}, \, \bm{b}^{(i)} \sim \mathcal{U}\!\left(-n_{\text{h}_i}^{-1/2}, \, n_{\text{h}_i}^{-1/2}\right).
\end{equation}
Table \ref{tab:2D_params} shows the network and training parameters. 
The networks are trained using the Adam optimizer \cite{Kingma2017} in PyTorch.

\begin{table}[H]
\centering
\resizebox{\textwidth}{!}{
\begin{tabular}{ccccccc}
\toprule
input features $(n_{\text{f}})$ & hidden layers & hidden width $(n_{\text{h}_i})$ & learning rate & batch size & number of epochs  & weight decay\\
\midrule
 2N + 4 & 4 & N+2 &  1e-2 & 5 & 200 & 0\\
\bottomrule
\end{tabular}
}
\caption{The table shows parameters used in 2-D training. The learning rate is chosen by an initial hyperparameter search.}
 \label{tab:2D_params}
\end{table}

In each iteration of the training process, we draw five realizations of $\gamma$, uniformly sampling and pairing initial conditions and sources described in Section \ref{sec:data2D}. 
Each pairing is assigned independently sampled realizations of $\sig{a}$ and $\sig{s}$. 
Both \fpn and the reference $P_{37}$ system use  grid spacings of $\dx = \frac{1}{50}$ and $\Delta t = \frac{1}{100}$ for the space-time discretization.

\subsubsection{Test Case: Line-source}

The line-source problem \cite{Brunner2002} presents a challenging benchmark due to the presence of steep gradients in the scalar flux and the emergence of large high-order moments in the angular flux. Physically, the initial condition corresponds to a highly localized source at the center of the domain, which is typically modeled as a Dirac delta function. In our setup, we approximate this source using a smooth Gaussian \eqref{eq:gauss} centered at $(x, y) = (0,0)$ with standard deviation $\varsigma = 0.03$.

We consider a sequence of polynomial orders $N \in \{3, 5, 7, 9\}$ and compare the classical \pn approximation with the \fpn approximation. Representative solution profiles of the particle density for different values of $N$ are shown in Figure~\ref{fig:ls}.  The \fpn method exhibits significantly reduced oscillations in the scalar flux, particularly near the source, where Gibbs-like artifacts dominate in the \pn approximation. For smaller $N$, the neural network filter performs marginally better than the constant filter.  However, for moderate $N$, the benefit of the neural network filter over the constant is significant.

\begin{figure}[H]
\centering
    \begin{subfigure}[c]{\subfigwidth}  
        \includegraphics[width=\textwidth] {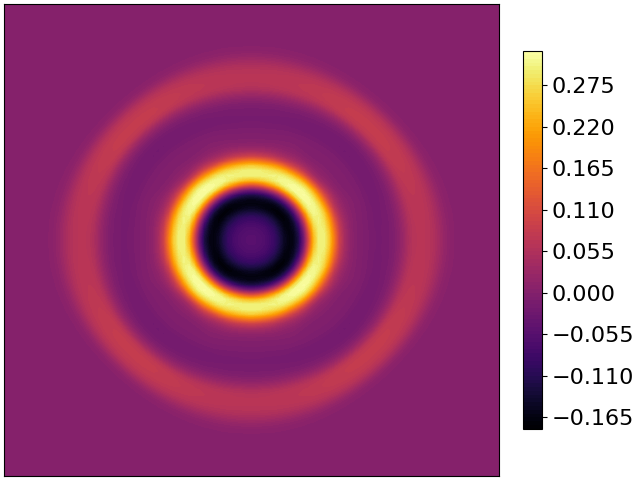}
        \caption{P$_3$}
        \label{ls:P3}
    \end{subfigure} 
    \begin{subfigure}[c]{\subfigwidth}  
        \includegraphics[width=\textwidth] {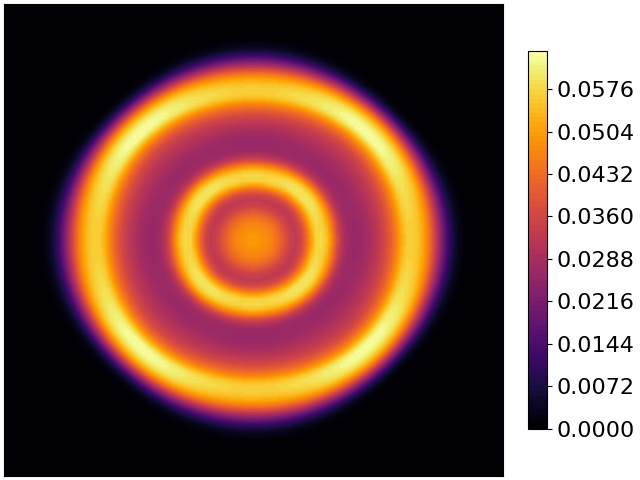}
        \caption{FP$_3$}
         \label{ls:FP3}
    \end{subfigure}
    \begin{subfigure}[c]{\subfigwidth}  
        \includegraphics[width=\textwidth] {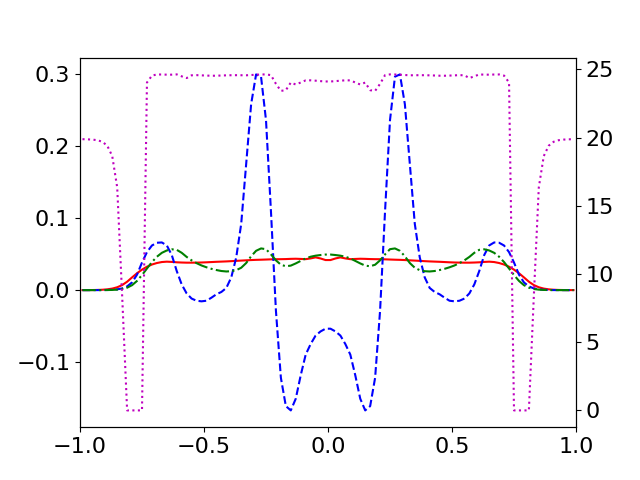}
        \caption{slice 0}
         \label{ls:slice0_P3}
    \end{subfigure}
    \begin{subfigure}[c]{\subfigwidth}  
        \includegraphics[width=\textwidth] {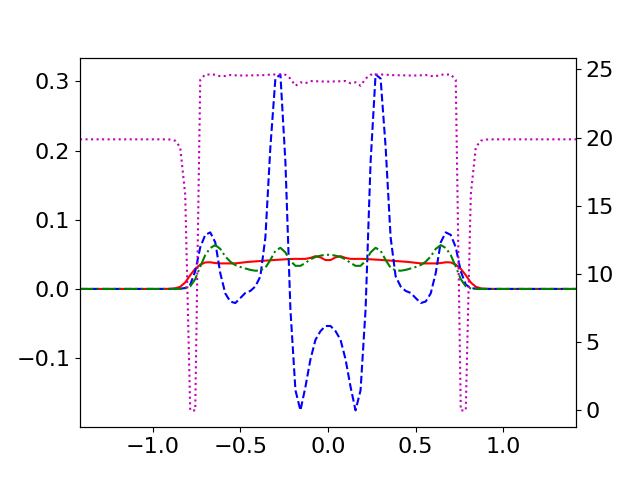}
        \caption{slice 45}
        \label{ls:slice45_P3}
    \end{subfigure}
    \centering
    \begin{subfigure}[c]{\subfigwidth}  
        \includegraphics[width=\textwidth] {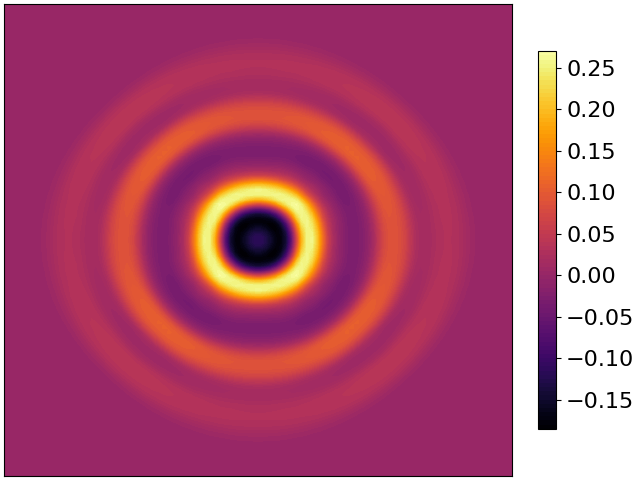}
        \caption{P$_5$}
         \label{ls:P5}
    \end{subfigure} 
    \begin{subfigure}[c]{\subfigwidth}  
        \includegraphics[width=\textwidth] {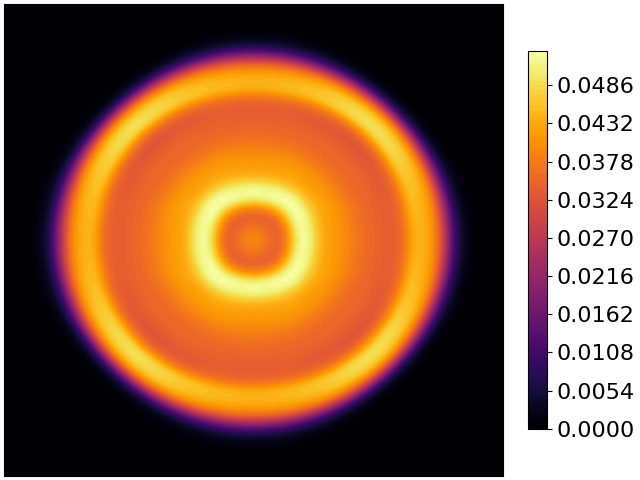}
        \caption{FP$_5$}
         \label{ls:FP5}
    \end{subfigure}
    \begin{subfigure}[c]{\subfigwidth}  
        \includegraphics[width=\textwidth] {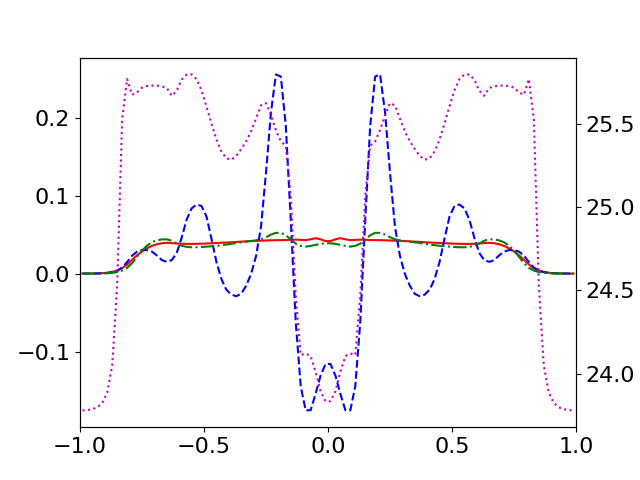}
        \caption{slice 0}
         \label{ls:slice0_P5}
    \end{subfigure}
    \begin{subfigure}[c]{\subfigwidth}  
        \includegraphics[width=\textwidth] {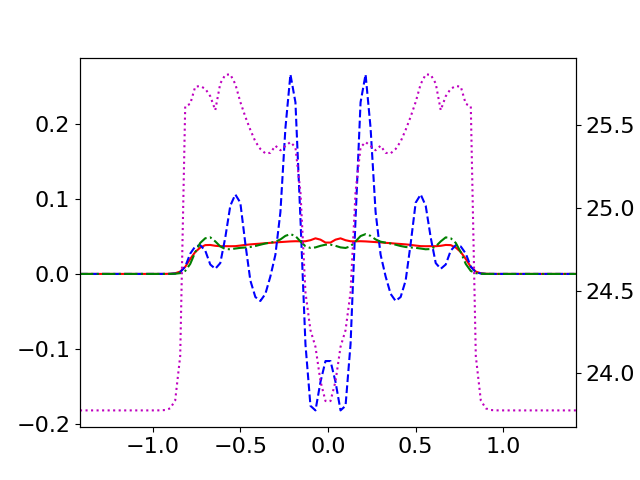}
        \caption{slice 45}
        \label{ls:slice45_P5}
    \end{subfigure}
    \centering
    \begin{subfigure}[c]{\subfigwidth}  
        \includegraphics[width=\textwidth] {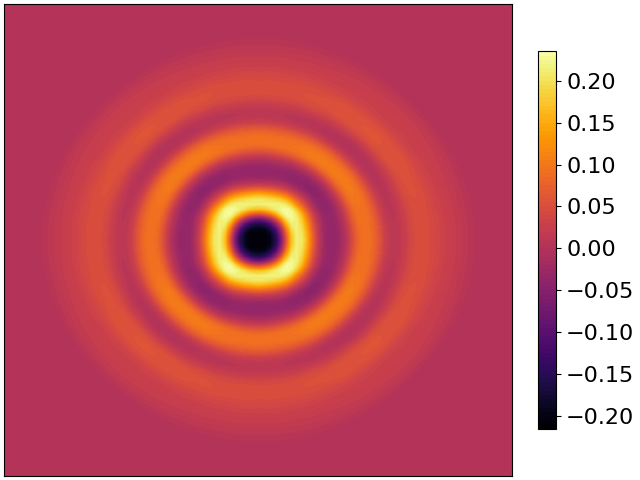}
        \caption{P$_7$}
         \label{ls:P7}
    \end{subfigure} 
    \begin{subfigure}[c]{\subfigwidth}  
        \includegraphics[width=\textwidth] {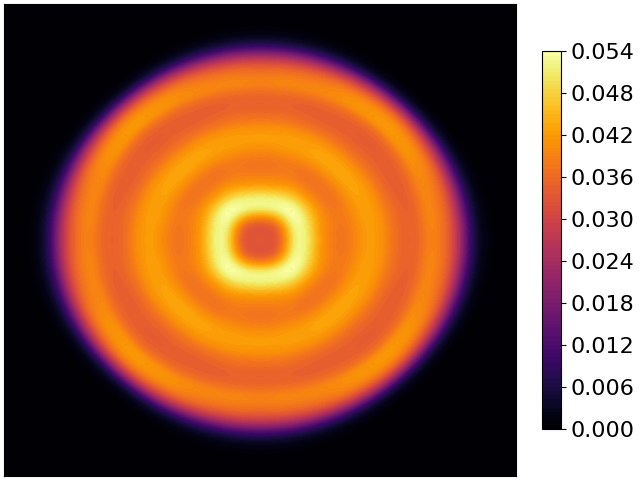}
        \caption{FP$_7$}
        \label{ls:FP7}
    \end{subfigure}
    \begin{subfigure}[c]{\subfigwidth}  
        \includegraphics[width=\textwidth] {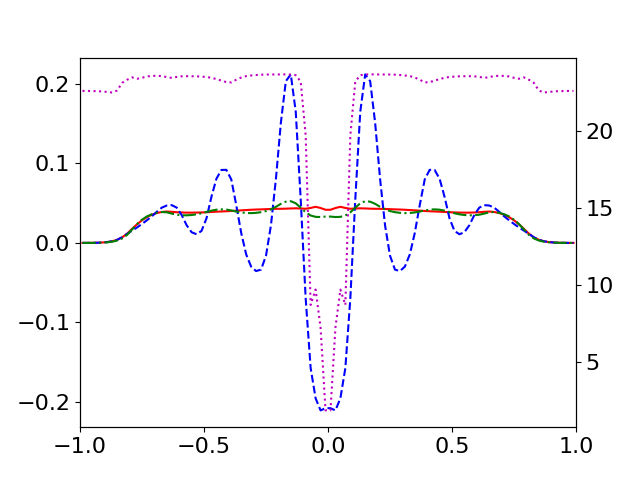}
        \caption{slice 0}
         \label{ls:slice0_P7}
    \end{subfigure}
    \begin{subfigure}[c]{\subfigwidth}  
        \includegraphics[width=\textwidth] {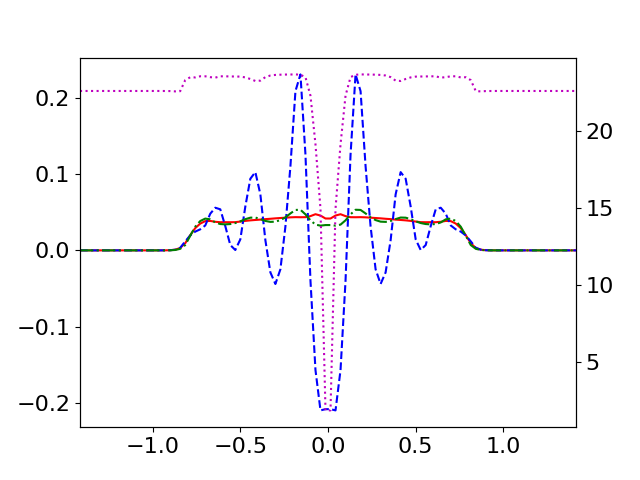}
        \caption{slice 45}
        \label{ls:slice45_P7}
    \end{subfigure}
    \centering
    \begin{subfigure}[c]{\subfigwidth}  
        \includegraphics[width=\textwidth] {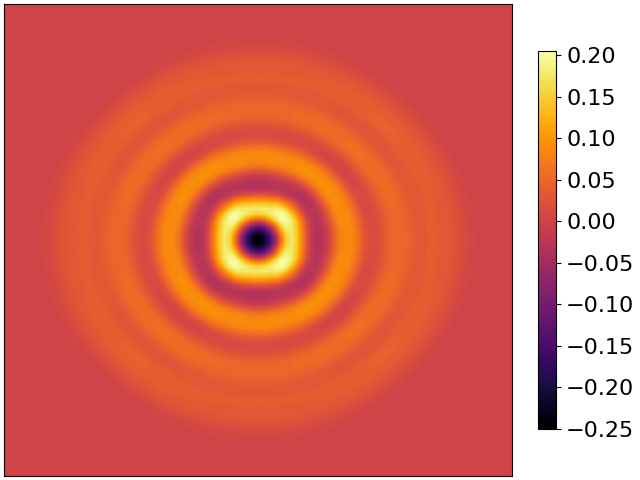}
        \caption{P$_9$}
         \label{ls:P9}
    \end{subfigure} 
    \begin{subfigure}[c]{\subfigwidth}  
        \includegraphics[width=\textwidth] {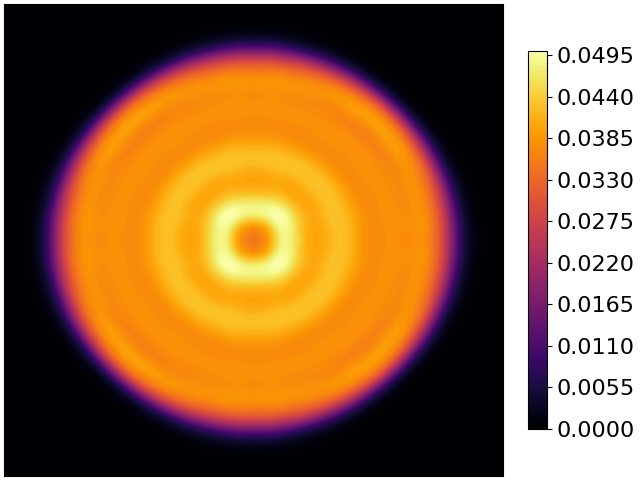}
        \caption{FP$_9$}
        \label{ls:FP9}
    \end{subfigure}
    \begin{subfigure}[c]{\subfigwidth}  
        \includegraphics[width=\textwidth] {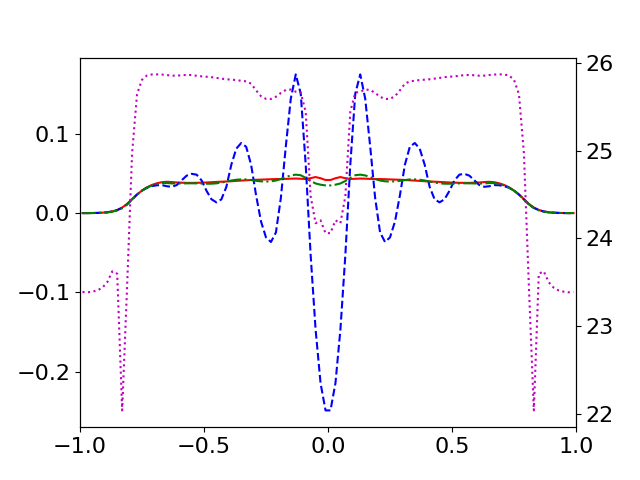}
        \caption{slice 0}
         \label{ls:slice0_P9}
    \end{subfigure}
    \begin{subfigure}[c]{\subfigwidth}  
        \includegraphics[width=\textwidth] {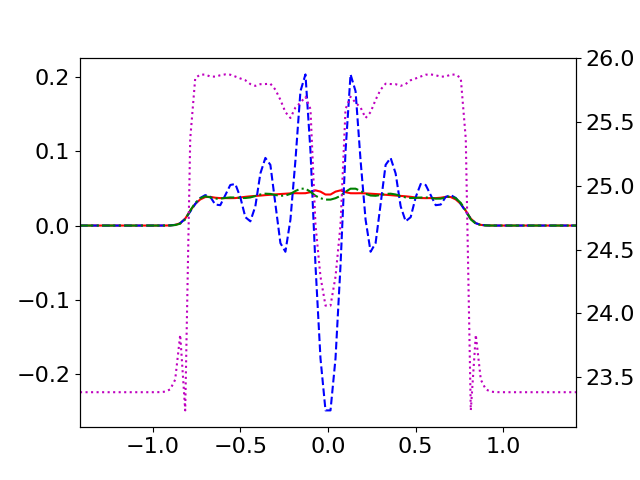}
        \caption{slice 45}
        \label{ls:slice45_P9}
    \end{subfigure} 

    \begin{subfigure}[c]{\linewidth} 
    \centering
    \vspace{1.0em}
        {\includegraphics[width=0.5\textwidth]{figures/legend.png}}
    \end{subfigure}
    \vspace{0.5em}
    \centering
  
    \centering   
    \begin{tabular}{@{}c cccc ccc@{}}
        \toprule
        $N$ & Plot Ref & $e_N$ & $e_N^{\text{nn}}$ & $e_N^{\text{const}}$ & $r_N^{\text{nn}}$ & $r_N^{\text{const}}$ \\
        \midrule
        3 & \subr{ls:P3} - \subr{ls:slice45_P3} 
        & 2.3141 & $\bm{0.3276}$ & 0.3859 & $\bm{0.1416}$ & 0.1668 \\ 
        5 & \subr{ls:P5} - \subr{ls:slice45_P5} 
        & 1.7282 & $\bm{0.1333}$ & 0.1389 & $\bm{0.0771}$ & 0.0804 \\ 
        7 & \subr{ls:P7} - \subr{ls:slice45_P7} 
        & 1.3168 & $\bm{0.0822}$ & 0.1599 & $\bm{0.0624}$ & 0.1214 \\
        9 & \subr{ls:P9} - \subr{ls:slice45_P9} 
        & 0.9664 & $\bm{0.0408}$ & 0.1632 & $\bm{0.0422}$ & 0.1688 \\ 
        \bottomrule 
    \end{tabular}
     \caption{(Line-Source) The figure shows particle concentrations for \pn and \fpn at $\tf = 0.75$. 
     The first row shows the $N=3$ profiles and increases sequentially to $N=9$ in the bottom row.  
     From left to right, the plots show \pn, \fpn, a slice at $y = 0$ and a slice at 45 degrees. 
     In the plots of the 1-D slices, the scale for particle concentration is shown on the left axis and the scale for $\sig{f}$ is shown on the right axis.
     The horizontal axis measures the signed distance from the origin.
     As $N$ increases, the error of the \fpn solution decreases and the oscillations associated with the \pn approximation are significantly dampened. 
     The values shown in bold in the table represent the smallest errors between the neural network and constant filter solutions.
     }
      \label{fig:ls}
\end{figure}

\subsubsection{Test Case: Lattice}

The lattice problem \cite{Brunner2002} poses a challenging benchmark due to the presence of sharp discontinuities in the material cross-sections. The layout of the material and a $P_{37}$ reference solution are given in Figure~\ref{fig:latt_layout}. Numerical simulations are performed using a uniform spatial resolution of $\Delta x = \Delta y = \frac{1}{40}$ and a temporal step size of $\Delta t = \frac{1}{160}$.

\begin{figure}[H]
    \centering
    \begin{minipage}{0.3\textwidth}
        \centering
        \begin{tikzpicture}[scale=0.55] 
      \def\numx{7}
      \def\numy{7}
      
      \foreach \i in {0,...,6} {
        \foreach \j in {0,...,6} {
          \pgfmathsetmacro{\xleft}{\j}
          \pgfmathsetmacro{\ybottom}{\i}

          \def\fillcolor{white}

          \pgfmathsetmacro{\xedge}{\j}
          \pgfmathsetmacro{\yedge}{\i}

          \ifnum \i=3
            \ifnum \j=3
              \def\fillcolor{orange}
            \fi
          \fi
    
          \ifnum \i=1
            \ifnum \j=1 \def\fillcolor{black} \fi
            \ifnum \j=3 \def\fillcolor{black} \fi
            \ifnum \j=5 \def\fillcolor{black} \fi
          \fi
          \ifnum \i=2
            \ifnum \j=2 \def\fillcolor{black} \fi
            \ifnum \j=4 \def\fillcolor{black} \fi
          \fi
          \ifnum \i=3
            \ifnum \j=1 \def\fillcolor{black} \fi
            \ifnum \j=5 \def\fillcolor{black} \fi
          \fi
          \ifnum \i=4
            \ifnum \j=2 \def\fillcolor{black} \fi
            \ifnum \j=4 \def\fillcolor{black} \fi
          \fi
          \ifnum \i=5
            \ifnum \j=1 \def\fillcolor{black} \fi
            \ifnum \j=3 \def\fillcolor{black} \fi
            \ifnum \j=5 \def\fillcolor{black} \fi
          \fi

        \ifnum \i=5
          \ifnum \j=3
            \def\fillcolor{white}
          \fi
        \fi
          \filldraw[fill=\fillcolor, draw=black] (\xleft,\ybottom) rectangle ++(1,1);
        }
      }
    \end{tikzpicture}
    \end{minipage}  
    \begin{minipage}{0.3\textwidth}
        \centering
        \begin{tabular}{c c c c}
            \toprule
            Region & $\sig{t}$ & $\sig{s}$ & $S$\\
            \midrule
            white  & 1  & 1 & 0 \\
            orange & 10 & 0 & 1 \\
            black  & 10 & 0 & 0 \\
            \bottomrule
        \end{tabular}
    \end{minipage}
    \begin{minipage}{0.3\textwidth}
        \centering
        \includegraphics[width=1\textwidth] {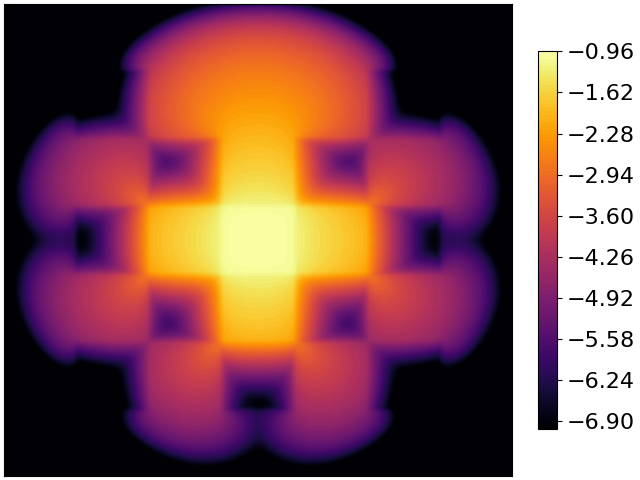}
    \end{minipage}
    
    \caption{(Lattice) The left plot shows the geometric layout of the lattice problem.  
    The spatial domain is $[0,7] \times [0,7]$ and each square has an area of 1. 
    The intensity is initially zero everywhere and  radiation is introduced by an isotropic source in the orange region.  
    The orange and black regions are purely absorbing, while the white regions are purely scattering. 
    The boundary conditions are vacuum. 
    The values of the material cross-sections and isotropic source are shown in the center table.  
    The right figure shows the $P_{37}$ solution at $\tf = 3.2$ with $\log_{10}$ scaling.}   
        \label{fig:latt_layout}
\end{figure}

Representative solution profiles for both the \pn and \fpn methods at $\tf = 3.2$ are shown in Figure~\ref{fig:latt}.  The profiles show the improved resolution of the filtered solutions in regions with steep gradients and material heterogeneities.  The behavior of the learned filter strength is further illustrated in Figure~\ref{fig:latt_sigf}, which shows the filter strength for $N=5$ at both $\tf = 1.6$ and $\tf = 3.2$. The filter accurately identifies regions of high anisotropy and sharp features, thereby enabling more accurate and efficient propagation of radiative transport features throughout the domain.

Table~\ref{tab:latt} reports the $L^2$ errors and relative errors at times $\tf = 1.6$ and $\tf = 3.2$ for varying angular resolutions. The boldface entries highlight the more accurate result between the neural-network-based filter and the constant filter baseline. Both filter provide substantial accuracy gains, with the neural network performing slightly better in each case.

\begin{figure}[H]
\centering
    \begin{subfigure}[c]{0.32\linewidth}
        \includegraphics[width=\textwidth] {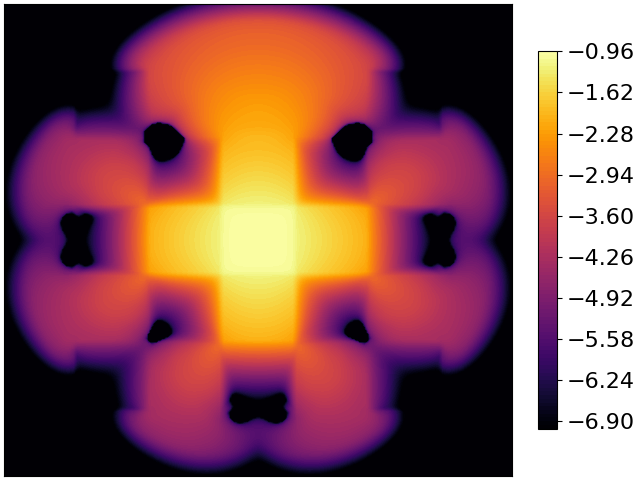}
        \caption{\pn[3]}
        \label{latt:P3}
    \end{subfigure}
    \begin{subfigure}[c]{0.32\linewidth}
        \includegraphics[width=\textwidth] {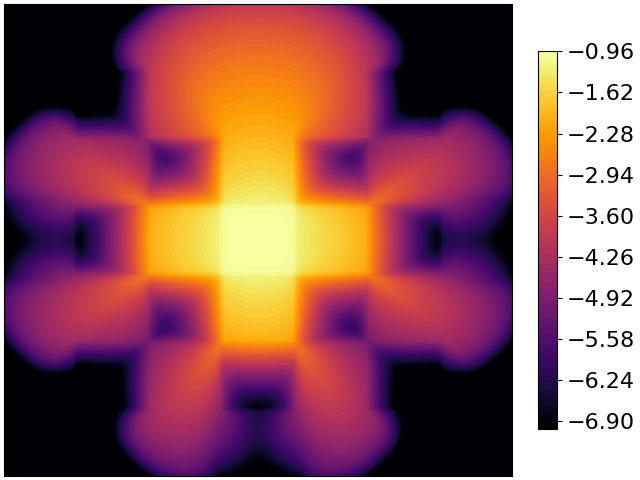}
                \caption{FP$_3$}
        \label{latt:FP3}
    \end{subfigure}
    
        \begin{subfigure}[c]{0.32\linewidth}
        \includegraphics[width=\textwidth] {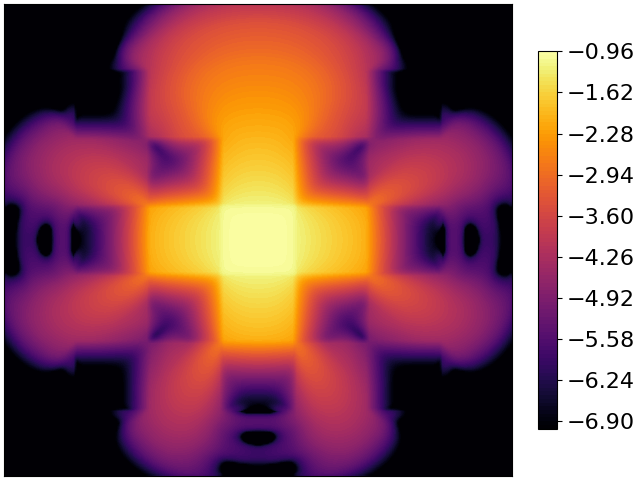}
        \caption{P$_9$}
        \label{latt:P9}
    \end{subfigure}
    \begin{subfigure}[c]{0.32\linewidth}
        \includegraphics[width=\textwidth] {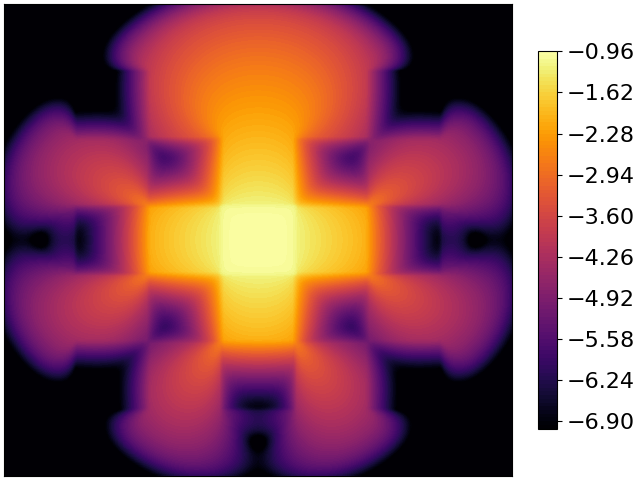}
        \caption{FP$_9$}
        \label{latt:FP9}
    \end{subfigure}
    \caption{(Lattice) Particle concentrations at $\tf = 3.2$ for the \pn and \fpn methods are shown. 
The corresponding errors with respect to the \pn[37] solution are reported in Table~\ref{tab:latt}}. 
    \label{fig:latt}
\end{figure}

\begin{figure}[H]
\centering
        \begin{subfigure}[c]{0.36\linewidth}  
        \includegraphics[width=\textwidth] {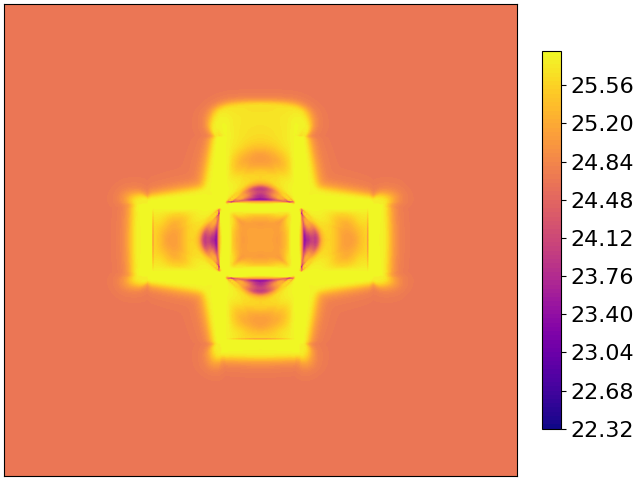}
    \end{subfigure} 
    \begin{subfigure}[c]{0.36\linewidth}  
        \includegraphics[width=\textwidth] {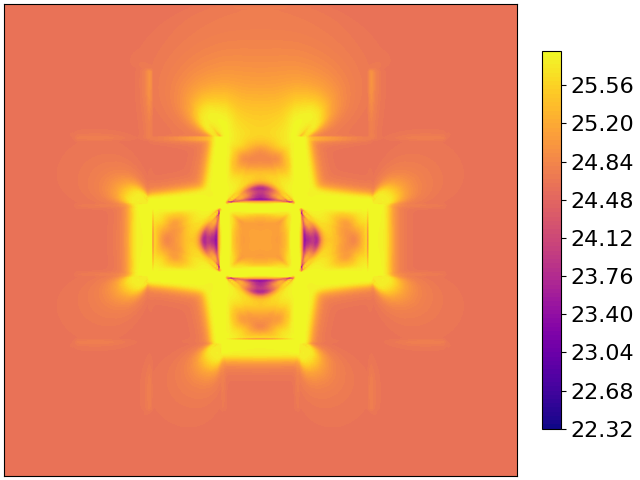}
    \end{subfigure}
    \caption{(Lattice) The figure shows the filter strength for $N=9$ at $t=1.6$ and $t=3.2$. The filter detects the geometric features of the test case domain where the solution has large gradients. }
    \label{fig:latt_sigf}
\end{figure}

\begin{table}[H]
\centering
\resizebox{0.9\textwidth}{!}{
\begin{tabular}{@{}c c ccc cc c ccc cc@{}}
\toprule
& & \multicolumn{5}{c}{\bm{$\tf = 1.6$}} & \phantom{abc} & \multicolumn{5}{c}{\bm{$\tf = 3.2$}} \\
\cmidrule{3-7} \cmidrule{9-13}
$N$ & Plot Ref 
& $e_N$ & $e_N^{\text{nn}}$ & $e_N^{\text{const}}$ & $r_N^{\text{nn}}$ & $r_N^{\text{const}}$ 
& & $e_N$ & $e_N^{\text{nn}}$ & $e_N^{\text{const}}$ & $r_N^{\text{nn}}$ & $r_N^{\text{const}}$ \\
\midrule  
3 & \subr{latt:P3} - \subr{latt:FP3} 
  & 0.0426 & $\bm{0.0126}$ & 0.0146 & $\bm{0.2959}$ &  0.3435   
  & & 0.0274 & $\bm{0.0127}$ & 0.0129 &  $\bm{0.4642}$ &  0.4695 \\
5 & - 
  & 0.0225 & $\bm{0.0063}$ & 0.0066  & $\bm{0.2792}$ &   0.2905 
  & & 0.0156 & $\bm{0.0065}$ & 0.0068  & $\bm{0.4182}$ & 0.4358\\ 
7 & - 
  &0.0140 & $\bm{0.0038}$ & 0.0042  & $\bm{0.2728}$ &   0.3021 
  & & 0.0102 & $\bm{0.0040}$ & 0.0042 & $\bm{0.3913}$ & 0.4096 \\  
9 & \subr{latt:P9} - \subr{latt:FP9} 
  & 0.0097 & $\bm{0.0029}$ & 0.0031  & $\bm{0.2953}$ & 0.3160 
  & & 0.0072 & $\bm{0.0029}$ & 0.0029 & $\bm{0.4090}$ & 0.4103 \\
\bottomrule
\end{tabular}
}
\caption{(Lattice) Errors and relative errors for the lattice problem at $t=1.6$ and $t=3.2$. Bold values indicate the smaller error between neural network and constant filters. Plot references for $N=3$ and $N=9$ are shown in the second column; solution profiles are given in Figure~\ref{fig:latt}.} 
\label{tab:latt}
\end{table}

\section{Conclusion} \label{conclusion}

In this work, we establish a strategy for adaptively filtering in \fpn based on the local state of the system.  We model the filter strength as a neural network that depends on features that are terms in the original \pn equations.  These features are pre-processed to ensure rotational invariance.

We train 1-D and 2-D models for several values of $N$ and compare the neural network based filtered solutions with those obtained using a constant filter strength that is trained with the same data.   
The resulting low-resolution \fpn solutions yield significant improvements in quality at a comparable cost to the original \pn approximation.  
The training data can be generated from simple initial conditions and cross-section configurations. 
Naturally, the quality of the training data is crucial; the corresponding \pn solutions should be representative of typical behaviors associated with irregular solutions. 
We train the model with problems evolved until final time $\tf =0.5$, which provides a reasonable balance between computational cost and sufficient training.

Across all test cases, filtering reduces the error with respect to the original \pn approximation, regardless of whether the filter strength is a constant or determined by a neural network ansatz.   In 1-D tests, this reduction can vary from 
$10-90\%$.  
The neural network filter outperforms the constant filter in several cases, but under-performs in others.  
We observe that the constant filter outperforms the neural network filter for problems with streaming regimes (when the material is vacuum).  
In 2-D tests, the error reduction can vary from $70-96\%$. 
The neural network filter reduces the error more than the constant filter in all test cases and in some cases, the error reduction is significant.

This strategy is flexible and can accommodate various forms of the filter strength ansatz, including a constant form, which offers the simplest and often most practical implementation.  
However, the constant form is limited by construction, as it does not adapt with the solution over time and space.
This limitation necessitates tuning the filter based on the known (or estimated) regularity of the solution and the final time.  
For short-time simulations, a large filter strength may improve the accuracy of the approximation significantly, but that same filter strength applied to a long-time simulation over-smooths the solution and accuracy may be lost.
In contrast, the neural network ansatz does not have this limitation, since the filter strength evolves dynamically with the state.

In future work, it would be beneficial to better understand and address the shortcomings of the neural network models in 1-D test cases, which under-perform the constant filter models in several cases.  In application spaces, we believe that improvements could be made with a dedicated set of training problems and hyperparameter optimization strategies.  In addition, we conjecture that filtering could include extended to other physically significant problems like thermal radiative transfer \cite{plumridge2025}.

\printbibliography

\newpage
\appendix
\section{Flux matrices in the spherical harmonic equations}
\label{sec:flux_matrices}
The complex spherical harmonics satisfy the recursion relation \cite{brunner2005two}
\begin{equation}
\label{eq:recursion_complex}
\bm{\Omega} \overline{Y}_{\ell}^m = \frac{1}{2}i
\begin{bmatrix}
-c_{\ell-1}^{m-1} \overline{Y}_{\ell-1}^{m-1} + d_{\ell+1}^{m-1} \overline{Y}_{\ell+1}^{m-1} + e_{\ell-1}^{m+1} \overline{Y}_{\ell-1}^{m+1} - f_{\ell+1}^{m+1} \overline{Y}_{\ell+1}^{m+1} \\
i \left( c_{\ell-1}^{m-1} Y_{\ell-1}^{m-1} - d_{\ell+1}^{m-1} Y_{\ell+1}^{m-1} + e_{\ell-1}^{m+1} Y_{\ell-1}^{m+1} - f_{\ell+1}^{m+1} Y_{\ell+1}^{m+1} \right) \\
2(a_{\ell-1}^m \overline{Y}_{\ell-1}^m + r_{\ell+1}^m \overline{Y}_{\ell+1}^m)
\end{bmatrix}
\end{equation}
where
\begin{align}
a_{\ell}^m &= \sqrt{\frac{(\ell-m+1)(\ell+m+1)}{(2\ell+3)(2\ell+1)}}, & b_{\ell}^m &= \sqrt{\frac{(\ell-m)(\ell+m)}{(2\ell+1)(2\ell-1)}}, & c_{\ell}^m &= \sqrt{\frac{(\ell+m+1)(\ell+m+2)}{(2\ell+3)(2\ell+1)}}, \\
d_{\ell}^m &= \sqrt{\frac{(\ell-m)(\ell-m-1)}{(2\ell+1)(2\ell-1)}}, & e_{\ell}^m &= \sqrt{\frac{(\ell-m+1)(\ell-m+2)}{(2\ell+3)(2\ell+1)}}, & f_{\ell}^m &= \sqrt{\frac{(\ell+m)(\ell+m-1)}{(2\ell+1)(2\ell-1)}}.
\end{align}
When expressed in terms of the real spherical harmonics, the recursion relation \eqref{eq:recursion_complex} becomes \cite{Frank2016}

\begin{equation}
\label{eq:recursion_real}
\bm{\Omega} R_{\ell}^m = \frac{1}{2} \text{sgn}(m)
\begin{bmatrix}
(1-\delta_{\ell,1})(\tilde{c}_{\ell-1}^{|m|-1} R_{\ell-1}^{|m|-1} - \tilde{d}_{\ell-1}^{|m|-1} R_{\ell-1}^{|m|-1}) - \tilde{e}_{\ell-1}^{|m|+1} R_{\ell-1}^{|m|+1} + \tilde{f}_{\ell-1}^{|m|+1} R_{\ell-1}^{|m|+1} \\
(1-\delta_{\ell,1})(-\tilde{c}_{\ell-1}^{|m|-1} R_{\ell-1}^{-|m|-1} + \tilde{d}_{\ell-1}^{|m|-1} R_{\ell-1}^{-|m|-1}) - \tilde{e}_{\ell-1}^{|m|+1} R_{\ell-1}^{-|m|-1} + \tilde{f}_{\ell-1}^{|m|+1} R_{\ell-1}^{-|m|+1} \\
2(\tilde{a}_{\ell-1}^m R_{\ell-1}^m + \tilde{b}_{\ell+1}^m R_{\ell+1}^m)
\end{bmatrix}
\end{equation}
where $\delta_{i,j}$ denotes the Kronecker delta, and $\text{sgn}(m)$ denotes the sign function (with
abuse of notation in zero: $\text{sgn}(0) \equiv 1$). The coefficients are given by
\begin{align}
m^{+} &= m + \text{sgn}(m), & m^{-} &= m - \text{sgn}(m) \\
\tilde{c}_{\ell}^m &= \begin{cases}
0, & m < 0 \\
\sqrt{2}c_{\ell}^m, & m = 0 \\
c_{\ell}^m, & m > 0
\end{cases}, & \tilde{d}_{\ell}^m &= \begin{cases}
0, & m < 0 \\
\sqrt{2}d_{\ell}^m, & m = 0 \\
d_{\ell}^m, & m > 0
\end{cases} \\
\tilde{e}_{\ell}^m &= \begin{cases}
\sqrt{2}e_{\ell}^m, & m=1 \\
e_{\ell}^m, & m>1
\end{cases}, & \tilde{f}_{\ell}^m &= \begin{cases}
\sqrt{2}f_{\ell}^m, & m=1 \\
f_{\ell}^m, & m>1
\end{cases}
\end{align}
The recursion in \eqref{eq:recursion_real} enables a direct evaluation of the tensor $\bfA = \vint{\bf{\Omega} \bfr\bfr}^\top$, which is simplified by the normalization condition $\vint{R_\ell^m R_{\ell'}^{m'} }= \delta_{ell,\ell'} \delta_{m,m'}$
\section{Reduction to one and two dimensions}\label{app:reduction}

\subsection{Reduction to 1-D}
\label{subsec:reduction-1D}
In a slab geometry the spatial domain is the region between two infinite parallel plates $\{z=z\textsubscript{L}\}$ and $\{z=z\textsubscript{R}\}$; that is, $X = \bbR^2 \times (z_L,z_R)$. 
With appropriate initial and boundary conditions, $\psi$ depends only on $z$ and the angular variable $\mu = \cos \theta $.

As a result, the moments $u_\ell^m$ are identically zero for $m \neq 0$.  In this setting we use moments with respect to the Legendre polynomials normalized on the interval $[-1,1]$:
\footnote{This normalization differs from the standard normalization in which $\int_{-1}^1 |P_\ell(\mu)|^2 d \mu = \frac{2 \ell+1}{2}$.}
\begin{equation}
    \tilde{P}_\ell(\mu) =  \sqrt{2 \pi}r_\ell^0(\mu). 
\end{equation}
Let $\tilde{\bfp} : [-1,1] \rightarrow \bbR^{N+1}$ be a vector-valued function of all orthonormal Legendre polynomials up to degree $N$ such that $\tilde{\bfp} (\mu) = [ \tilde{P}_0(\mu), \tilde{P}_1(\mu), \dots, \tilde{P}_N(\mu)]^\top$.
The \fpn solution in 1-D slab geometry is described by $\psifpn(z,\mu) = \tilde{\bfp} ^\top(\mu) \bfv(z,t)$ where

\begin{equation}
    \p_t \bfv + \tilde{\bfA} \p_z \bfv + \sig{a} \bfv + \sig{s} \tilde{\bfG} \bfv + \sig{f} \tilde{\bfF} \bfv = \tilde{\bfs},
\end{equation}
and $\tilde{\bfs}$ is a 1-D source.
The \fpn matrices are given by 
\begin{align}
   \tilde{\mathbf{A}}_{\ell,\ell'} &= a_\ell \, \delta_{\ell',\ell+1} + a_{\ell'} \, \delta_{\ell,\ell'+1}
    & \text{where} &&
    a_\ell &= \frac{\ell+1}{\sqrt{(2\ell+1)(2\ell+3)}},\\
   \tilde{\bfG}_{\ell,\ell'} &= (1-g_\ell)\delta_{\ell,\ell'},
    & \text{and} &&
    \tilde{\bfF}_{\ell,\ell'} ,
    &= -\log \left(f \left( \frac{\ell}{N+1}\right) \right) \delta_{\ell,\ell'}. 
\end{align}

\subsection{Reduction to 2-D}
\label{subsec:reduction-2D}
In two-dimensional planar geometry, $X = D \times \bbR$, where $D \subset \bbR^2$.  Under appropriate boundary and initial conditions, $\psi$ is independent of $z$ and an even function of $\mu$.  As a result, the fact that $r_\ell^m$ is an odd function of $\mu$ whenever $\ell + m$ is odd implies that moments the $u_\ell^m$ are identically zero whenever $\ell+m$ is odd.  In such cases, the \pn equations take the form
\begin{equation}
    \p_t \bfv 
    + \tilde{\bfA}^{(1)} \p_x \bfv 
    +  \tilde{\bfA}^{(2)} \p_y \bfv  
    + \sig{a}\bfv 
    + \sig{s} \tilde{\bfG} \bfv 
    + \sig{f} \tilde{\bfF} \bfv 
    = \tilde{\bfs}
\end{equation}
Here $\bfv$ contains the $n_2 = \frac12 (N+1)(N+2)$ nonzero components of $\upn$, $\tilde{\bfs}$ contains the corresponding components of $\bfs$ and the matrices $\tilde{\bfA}^{(1)}$, $\tilde{\bfA}^{(2)}$, $\tilde{\bfG}$, and $\tilde{\bfF}$, are formed by removing the appropriate rows and columns of $\bfA^{(1)}$, $\bfA^{(2)}$, ${\bfG}$, and ${\bfF}$, respectively.

\subsection{Space time discretization}\label{sec:space-time-disc}

Now we discuss the spatial and temporal discretizations.  We show the spatial discretization used in our 2-D tests. The 1-D discretization follows similarly.  For ease of notation, we write $\bfv:= \ufpn$.

For simplicity, we assume that $D$ is rectangle with side lengths $L_x$ and $L_y$ and is divided into $N_x \times N_y$ cells with centers $\bm{x}_{i,j} := (x_i,y_j)$ and dimension $\dx = L/N_x$ and $\dy = L/N_y$. Let  $C_{i,j} = (x_{i-1/2}, x_{i+1/2}) \times (y_{j-1/2}, y_{j+1/2})$ be the cell centered at $\bm{x}_{i,j}$ where $i \in \{0 \ddd N_x -1\}$ and $j \in \{0 \ddd N_y -1\}$ are spatial indices.  The physical cross-sections, source, and filter strength are assumed to be constant on each cell.

Since the matrices $\tilde{\bfA}^{(m)}$ are symmetric and diagonalizable, they can be decomposed as 

\begin{align*}
    \tilde{\bfA}^{(m)} =  \bfV^{(m)} \bsLambda^{(m)} 
(\bfV^{(m)})^\top
\end{align*}
where $\bsLambda^{(m)} \in  
\bbR^{n_2 \times n_2}$ is diagonal and $\bfV^{(m)} \in 
\bbR^{n_2 \times n_2}$ is orthogonal.
With this decomposition, let 

\begin{align*}
    |\tilde{\bfA}^{(m)}| = \bfV^{(m)} |\bsLambda^{(m)}| (\bfV^{(m)})^\top.
\end{align*}
Then the semi-discrete, second-order finite volume scheme \cite{Leveque2002} can be described as follows: Find 
\begin{equation}
    {\bfv}_{i,j}(t) 
    \approx \frac{1}{\dx \dy} \int_{C_{i,j}} \bfv(x,y,t) dx dy
\end{equation}
such that 
\begin{align}
  \begin{aligned}\label{eq:fv}
    \p_t \bfv_{i,j} 
    & + \frac{1}{2\dx} \tilde{\bfA}^{(1)} \left[ 
    (\bfv^-_{i+1/2,j} + \bfv^+_{i+1/2,j})
    - (\bfv^-_{i-1/2,j} + \bfv^+_{i-1/2,j})\right] 
    \\
    & -  \frac{1}{2\dx} |\tilde{\bfA}^{(1)}| \left[ {\bfv}^+_{i+1/2,j} - (\bfv^-_{i+1/2,j} + \bfv^+_{i-1/2,j} ) + \bfv^-_{i-i/2,j} \right] \\
    & + \frac{1}{2\dy} \tilde{\bfA}^{(2)} \left[ 
    (\bfv^-_{i,j+1/2} + \bfv^+_{i,j+1/2})
    - (\bfv^-_{i,j-1/2} + \bfv^+_{i,j-1/2}) \right] \\
    &-  \frac{1}{2\dy} |\tilde{\bfA}^{(2)}|\left[
    {\bfv}^+_{i,j+1/2} - (\bfv^-_{i,j+1/2} + \bfv^+_{i,j-1/2} ) + \bfv^-_{i,j-i/2})\right] \\
    &  + (\sig{a})_{i,j}\bfv_{i,j}  
    + (\sig{s})_{i,j} \tilde{\bfG} \bfv_{i,j} 
    + (\sig{f})_{i,j} \tilde{\bfF}  \bfv_{i,j} = \tilde{\bfs}_{i,j}
    \end{aligned}
\end{align}
where,
\begin{align}
    \bfv_{i + 1/2,j}^- =  \bfv_{i,j} + \frac{\dx}{2} \bfp^x_{i,j}, &   \qquad 
    \bfv_{i + 1/2,j}^+ =  \bfv_{i+1,j} - \frac{\dx}{2} \bfp^x_{i+1,j},\\
    \bfv_{i,j + 1/2}^- =  \bfv_{i,j} + \frac{\dy}{2} \bfp^y_{i,j}, &   \qquad 
    \bfv_{i,j + 1/2}^- =  \bfv_{i,j+1} - \frac{\dy}{2} \bfp^y_{i,j+1}
\end{align}
and the slope limiters are defined as 
\begin{subequations}
\begin{align}
\bfp^x_{i,j}  
= \operatorname{minmod} \left\{ \bfv_{i+1,j}  - \bfv_{i,j}, \bfv_{i,j} - \bfv_{i-1,j} \right\}, && \bfp^y_{i,j}  
= \operatorname{minmod} \left\{ \bfv_{i,j+1}  - \bfv_{i,j}, \bfv_{i,j} - \bfv_{i,j-1} \right\}
\end{align}
\end{subequations}
where
\begin{align}
\text{minmod}(a,b) := \begin{cases}
0, & ab < 0 \\
\text{sgn}(a) \min \left(|a|,|b|  \right), & ab>0.
\end{cases}
\end{align}
The approximation source is given by $ \tilde{\bfs}_{i,j}(t) = \int_{C_{i,j}} \tilde{\bfs}(x,y,t) dx dy$ and the cross-sections are given by $(\sig{a})_{i,j} := \sig{a}(\bm{x}_{i,j})$, $(\sig{s})_{i,j} := \sig{s}(\bm{x}_{i,j})$, and $(\sig{f})_{i,j}(t)  = \Sig{f}\big(\bm{\xi}(\bfv_{i,j}(t),\tilde{\bfs}_{i,j}(t);\bm{w} \big)$.

We collect the cell averages of the \fpn moments into an array $\vec{\bfv}$ and express \eqref{eq:fv} in matrix-vector form as 

\begin{align}\label{eq:fv_vec}
    \p_t \vec{\bfv} + \bfL(\Sig{f}(\bm{\xi}(\vec{\bfv}),\vec{\bfs};\bm{w} ) \vec{\bfv} = \vec{\bfs}
\end{align}
where $\bfL$ is the matrix representation of the spatially discretized operators that depends on $\vec{\bfv}, \vec{\bfs}$, and $\bm{w}$ via the filter strength ansatz $\Sig{f}$, and $\bm{\xi}$ are $\Sig{f}$ act component-wise with respect to the mesh indices $i$ and $j$.

For the temporal discretization, we employ Heun's method \cite{Suli2003}, an explicit second-order Runge-Kutta scheme.
One time step of Heun's method can be written as 

\begin{subequations}
\label{eq:Heun}
\begin{align}
\vec{\bfv}^{(1)} &= \vec{\bfv}^k - \Delta t \bfL(\Sig{f}(\bm{\xi}(\vec{\bfv}^k),\vec{\bfs}^k;\bm{w} ) \vec{\bfv}^k  + \Delta t \bfs^k, \\
\vec{\bfv}^{(2)} &= \vec{\bfv}^{(1)} - \Delta t \bfL(\Sig{f}(\bm{\xi}(\vec{\bfv}^{(1)}),\vec{\bfs}^{k+1};\bm{w} )
\vec{\bfv}^{(1)} + \Delta t \bfs^{k+1}, \\
\vec{\bfv}^{k+1} &= \frac{1}{2}\left(\vec{\bfv}^{(1)} + \vec{\bfv}^{(2)} \right), & 
\end{align}
\end{subequations}
where $k \in \{0, \dots, K -1\}$ is the temporal index.  We combine these formula into an update rule for $\vec{\bfv}^{k+1} = \vec{\bfv}^{k+1}(\bm{w})$:
\begin{equation}
    \vec{\bfv}^{k+1}(\bm{w}) = \bfH^k(\vec{\bfv}^{k}; \bm{w}),
    \quad k \in \{0, \dots, K -1\}
\end{equation}
where the index of $\bfH^k$ accounts for the sources $\vec{\bfs}^k$ and $\vec{\bfs}^{k+1}$.  This update rule is used to compute derivative of $\vec{\bfv}^{k+1}$ recursively,
\begin{equation}
    \frac{\p \vec{\bfv}^{k+1}}{\p \bm{w}} (\bm{w})
    = \frac{\p \bfH^k}{\p \bm{w}} (\vec{\bfv}^{k}; \bm{w})
    + \frac{\p \bfH^k}{\p \vec{\bfv}^k} (\vec{\bfv}^{k})
    \frac{\p \vec{\bfv}^k}{\p \bm{w}} (\vec{\bfv}^k, \bm{w}),
     \quad k \in \{0, \dots, K -1\}.
\end{equation}
The derivatives of $\bfH^k$ are straight-forward, yet tedious, calculations that apply the chain rule to \eqref{eq:Heun}.  These calculations involve state and parameter derivatives of $\Sig{f}$ which are implemented via backpropagation through the neural network ansatz.

\end{document}